\RequirePackage[l2tabu, orthodox]{nag}
\documentclass[a4paper,12pt]{amsart}
\usepackage{microtype}
\usepackage[charter]{mathdesign}

\makeatletter
\newcommand{\leqnomode}{\tagsleft@true\let\veqno\@@leqno}
\makeatother
\usepackage[T1]{fontenc}
\usepackage{tikz, tikz-cd, stmaryrd, amsmath, amsthm, amssymb,
	hyperref, bbm, mathtools, mathrsfs}	
\usepackage[all]{xy}

\usepackage[shortlabels]{enumitem}
\usetikzlibrary{arrows}
\usetikzlibrary{positioning}
\usepackage[utf8x]{inputenc}
\newcommand{\bb}{\textbf}

\newcommand{\ol}{\overline}
\newcommand{\mc}{\mathcal}

\newcommand{\ZZ}{\mathbb{Z}}

\newcommand{\PP}{\mathbb{P}}
\newcommand{\QQ}{\mathbb{Q}}

\newcommand{\FF}{\mathbb{F}}

\newcommand{\JJ}{\mathbb{J}}
\newcommand{\DD}{\mathbb{D}}

\newcommand{\MM}{\mathbb{M}}

\renewcommand{\SS}{\mathbb{S}}
\newcommand{\Mod}{\mathbf{Mod}}
\newcommand{\YD}{\mathbf{YD}}
\newcommand{\msm}{\mathsf{m}}
\newcommand{\msM}{\mathsf{M}}

\newcommand{\et}{{\rm\acute{e}t}}

\DeclareMathOperator{\Ind}{Ind}
\DeclareMathOperator{\Tor}{Tor}
\DeclareMathOperator{\Fix}{Fix}
\DeclareMathOperator{\pr}{pr}
\DeclareMathOperator{\tr}{tr}

\DeclareMathOperator{\coker}{coker}
\DeclareMathOperator{\im}{im}
\DeclareMathOperator{\id}{id}

\DeclareMathOperator{\Span}{Span}
\DeclareMathOperator{\Res}{Res}
\DeclareMathOperator{\res}{res}
\DeclareMathOperator{\cores}{cores}
\DeclareMathOperator{\GCD}{GCD}
\DeclareMathOperator{\ord}{ord}
\DeclareMathOperator{\rank}{rank}

\DeclareMathOperator{\len}{len}

\DeclareMathOperator{\cris}{cris}

\theoremstyle{plain}
\newtheorem{Theorem}{Theorem}[section]

\newtheorem{Lemma}[Theorem]{Lemma}
\newtheorem{Corollary}[Theorem]{Corollary}

\newtheorem{Proposition}[Theorem]{Proposition}

\theoremstyle{definition}
\newtheorem{Definition}[Theorem]{Definition}

\numberwithin{equation}{section}
\begin{document}
	
\title[The crystalline cohomology of covers]{The crystalline cohomology of covers\\ with a cyclic $p$-Sylow subgroup}
\author[J. Garnek]{J\k{e}drzej Garnek}

\subjclass[2020]{Primary 14G17, Secondary 14H30, 20C20} 
\keywords{crystalline cohomology, algebraic curves, group actions,
	characteristic~$p$}
\date{}  

\begin{abstract}
	Let $X$ be a smooth projective curve over a field $k$ with an action of a finite group $G$. A well-known result of Chevalley and Weil describes the $k[G]$-module structure of cohomologies of~$X$ in the case when the characteristic of $k$ does not divide $\# G$.
	In case when $G$ has a cyclic $p$-Sylow subgroup, it is known that the $G$-structures of the module of holomorphic differentials and of de Rham cohomology of~$X$ are completely determined by the ramification data of the cover $X \to X/G$.
	In this article we extend this result to the crystalline cohomology of~$X$. Also, we
	provide an explicit description of the structure of the crystalline cohomology when $G = \ZZ/p^n$. The main used tool is the theory of Yakovlev diagrams -- algebraic objects that classify the $G$-modules over Witt vectors. 
\end{abstract}

\maketitle
\bibliographystyle{plain}

\section{Introduction} \label{sec:intro}
Let $k$ be a perfect field of characteristic $p > 0$ and suppose that $X$ is a smooth projective curve over~$k$ with an action of a finite group $G$. 
There are many results concerning the equivariant structure of cohomologies of~$X$ for particular groups
(see e.g.~\cite{Valentini_Madan_Automorphisms} for the case of cyclic groups, \cite{WardMarques_HoloDiffs} for abelian groups, \cite{Bleher_Chinburg_Kontogeorgis_Galois_structure} for groups with a cyclic Sylow subgroup, or \cite{Bleher_Camacho_Holomorphic_differentials} for the Klein group) or curves (cf. \cite{Lusztig_Coxeter_orbits}, \cite{Dummigan_99}, \cite{Gross_Rigid_local_systems_Gm}, \cite{laurent_kock_drinfeld}). Also, one may expect that (at least in the case of $p$-groups) determining cohomologies comes down to Harbater--Katz--Gabber covers (cf. \cite{Garnek_p_gp_covers}, \cite{Garnek_p_gp_covers_ii}). However, there is no hope of obtaining a general formula. Indeed, if $G$ is a group with a non-cyclic $p$-Sylow subgroup, the set of indecomposable $k[G]$-modules is infinite. If, moreover, $p > 2$ then the indecomposable $k[G]$-modules are considered impossible to classify (cf. \cite{Prest}).
This brings attention to groups with a cyclic $p$-Sylow subgroup. For those, the set of 
equivalence classes of indecomposable modules is finite (cf. \cite{Higman}, \cite{Borevic_Faddeev}, \cite{Heller_Reiner_Reps_in_integers_I}). While their representation theory still seems a bit too complicated to derive a general formula for the cohomologies, 
the articles~\cite{Bleher_Chinburg_Kontogeorgis_Galois_structure} and \cite{Garnek_Kontogeorgis_cyclic_de_Rham} show that 
the $k[G]$-module structure of $H^0(X, \Omega_X)$ and $H^1_{dR}(X)$ are determined by the genus of $X$ and \emph{ramification data}, that is:
\begin{itemize}
	\item the stabilizers of the action on $X(\ol k)$,
	
	\item the lower (or equivalently, upper) ramification jumps of the action corresponding to the
	stabilizers,
	
	\item the fundamental characters of the action.
\end{itemize}
The de Rham cohomology serves as a starting point for several other cohomology theories, including crystalline, prismatic, syntomic, and rigid cohomology. In this article we 
focus on the first of them. Crystalline cohomology has found many applications, for instance in the study of supersingularity, point counting, 
$p$-adic Hodge theory, and other aspects of the geometry of varieties in characteristic~$p$.\\

Let $W$ be the ring of Witt vectors of infinite length over $k$
and let~$K$ be its field of fractions. Let also $g_X$ denote the genus of the curve~$X$. Recall that $H^1_{\cris}(X) := H^1_{\cris}(X/W)$, the crystalline cohomology of~$X$ over~$W$, is a free $W$-module of rank $2 g_X$. Its reduction $H^1_{\cris}(X) \otimes_W k$ can be naturally identified with $H^1_{dR}(X)$.
Assume that a finite group $G$ acts on the curve~$X$. By functoriality, $H^1_{\cris}(X)$ becomes then a $W[G]$-module.
For the rational crystalline cohomology $H^1_{\cris}(X/K) := H^1_{\cris}(X) \otimes_W K$ the $G$-structure might be fully described in terms of characters, using the Lefschetz trace formula (see \cite[Corollary~V.2.8]{Milne1980} for a proof that works for any Weil cohomology theory). It seems, however that the equivariant structure of 
$H^1_{\cris}(X)$ is much more complicated, for instance because it encompasses the equivariant structure of $H^1_{dR}(X)$. We do not know of any examples of the equivariant structure of $H^1_{\cris}(X)$ computed explicitly for a given curve.
Even though there are several computationally feasible approaches to crystalline cohomology, it seems hard to find an explicit basis of $H^1_{\cris}(X)$. Standard methods, such as comparison with Monsky--Washnitzer cohomology
(see e.g.~\cite{Kedlaya2001}) or with
rigid cohomology (see~\cite{Booher_crystalline}), typically give access only to
$H^1_{\cris}(X/K)$. Integral methods based on de Rham--Witt
cohomology lead to difficult explicit computations with de Rham--Witt forms already for $H^1_{\cris}(X/W_2)$.\\

We now present the main result of the article, namely the analogue of \cite[Main Theorem]{Garnek_Kontogeorgis_cyclic_de_Rham} for the crystalline cohomology.
Let $W_n$ be the ring of Witt vectors of length $n$, i.e. $W_n := W/p^n W$.
\begin{Theorem} \label{thm:cyclic_sylow}
	Suppose that $G$ is a group with a cyclic $p$-Sylow subgroup~$F$.
	Let $X$ be a smooth projective curve with an action of~$G$ over a perfect field~$k$ of characteristic~$p$.
	The $W[G]$-module structure of $H^1_{\cris}(X)$ is uniquely determined by the ramification data of the cover $X \to X/G$ and the genus of $X$.
\end{Theorem}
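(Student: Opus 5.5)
We plan to reduce the statement to the $p$-Sylow subgroup and then use the classification of $W[F]$-lattices (and more generally $W[G]$-lattices) by Yakovlev diagrams. First, since $F$ is a cyclic $p$-Sylow subgroup, a $W[G]$-lattice $M$ is determined by its restriction to the normalizer $N_G(F)$ together with the Green correspondence. More concretely, $M$ is a direct summand of $\Ind_F^G \Res^G_F M$ and the relative projectivity makes the Krull--Schmidt decomposition controlled by $W[N_G(F)]$-data. So we first reduce to groups of the form $G = F \rtimes C$ with $C$ cyclic of order prime to $p$, following the strategy of \cite{Bleher_Chinburg_Kontogeorgis_Galois_structure} and \cite{Garnek_Kontogeorgis_cyclic_de_Rham}. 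For such groups the $C$-action can be split off by idempotents of $W[C]$, since $\# C$ is invertible in $W$. This reduces us to describing, for each character of $C$, the $W[F]$-module structure of an isotypic piece.

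Second, we invoke Yakovlev's theorem: a $W[\ZZ/p^n]$-lattice $M$ is determined, up to isomorphism, by its Yakovlev diagram. This diagram consists of the Tate cohomology groups (or rather the modules $M^{F_i}/\sum \tr(M^{F_j})$) for the subgroups $F_i \subset F$ of order $p^i$, together with the restriction and transfer maps between them. Applied to $M = H^1_{\cris}(X)$, the fixed submodules are related via the intermediate quotients $X_i := X/F_i$. Indeed, $H^1_{\cris}(X)^{F_i}$ compares with $H^1_{\cris}(X_i)$ through the pullback and trace (norm) maps, and restriction/transfer in the diagram correspond to $\pi^*$ and $\pi_*$ between the curves $X_i$. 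The key step is therefore to compute the kernels and cokernels of $\pi^* \colon H^1_{\cris}(X_{i}) \to H^1_{\cris}(X)^{F_i}$ and of the trace map. We would do this with a Hochschild--Serre type argument for crystalline cohomology of the $F_i$-cover, or with a local-to-global computation via the Leray spectral sequence for $\pi_*\mathcal{O}$ on a lift to $W$ (lifting the $G$-curve is possible only sometimes, so it is safer to work with de Rham--Witt or with the log-crystalline structure at the branch points). We expect these defects to be torsion modules supported at the ramified points, with length expressible through lower ramification jumps (cf. the de Rham case in \cite{Garnek_Kontogeorgis_cyclic_de_Rham}, where analogous discrepancies are given by ramification data).

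Third, once all groups and maps in the Yakovlev diagram are expressed via ranks $2g_{X_i}$, which the Riemann--Hurwitz formula gives in terms of $g_X$ and ramification data, and local invariants of the ramified points, the uniqueness follows from Yakovlev's classification. The $C$-eigenspace decomposition is handled by the fundamental characters. The main obstacle is the second step: determining precisely the torsion in the cokernel of $\pi^*$ and the transfer on $H^1_{\cris}$, and checking that it depends only on local ramification data. For this we would first try to prove that $H^1_{\cris}(X)$ and the diagram are determined modulo $p^m$ for every $m$ by the $W_m[G]$-structure of $H^1_{\cris}(X/W_m)$. We would then compute local contributions by a deformation argument: two covers with the same ramification data can be connected in a family (via Harbater--Katz--Gabber local models and the local-global principle), and crystalline cohomology is locally constant in smooth families with constant ramification.
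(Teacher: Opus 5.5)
Your proposal has a genuine gap exactly where you locate the main obstacle: the Yakovlev diagram $\mc D=\Delta(H^1_{\cris}(X))$ is never actually determined, and the fallback deformation argument does not work.

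\emph{The deformation argument fails.} A crystal with $G$-action on a connected base gives no identification between its fibres at distinct $k$-points. Isomorphism types of $G$-lattices can jump in families; this already happens for Jordan types of $k[\ZZ/p]$-modules. That they do not jump when the ramification data is fixed is precisely the content of the theorem. You would also need that any two $G$-covers with equal ramification data lie in one connected family with constant ramification. The Harbater--Katz--Gabber local--global principle describes deformations of a given cover and does not give this.

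\emph{Your guess about the ``defects'' is off.} The map $\pi^*\colon H^1_{\cris}(X/F_t)\to H^1_{\cris}(X)^{F_t}$ is an isomorphism (Lemma~\ref{lem:invariants}). The relevant information is $H^2(F_t,\mc M)=\mc M^{F_t}/N_{F_t}\mc M$. For $t\le m$ it vanishes, because $\tr_{X/(X/F_t)}$ is surjective (surjectivity on $H^1_{dR}$ plus Nakayama). For $t>m$ its $p$-torsion is $k^{\oplus 2}$, and this comes from the \'etale part, not from ramification jumps.

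\emph{The missing idea} is to take the known $k[G]$-structure of $H^1_{dR}(X)=\mc M\otimes_W k$ as input and then argue formally:
\begin{enumerate}
\item The universal coefficient sequence $0\to H^1(F_t,\mc M)\otimes k\to H^1(F_t,\mc M\otimes k)\to H^2(F_t,\mc M)[p]\to 0$, together with the $H^2$ computation above, gives $\mc D\otimes k\cong\Delta(H^1_{dR}(X))^{(m)}$.
\item Injectivity of $\beta_t\otimes k$ gives $\Fix_{F_1}(\mc D)_t=p\,\mc D_{t+1}$.
\item $\Fix_{F_1}(\mc D)\cong\Delta(H^1_{\cris}(X/F_1))$ is known by induction on $n$. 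Together with step 1, this fixes each $\mc D_t$ as a $W$-module.
\item Proposition~\ref{prop:lifting_F_automorphisms} lifts the isomorphism of reductions to an isomorphism of diagrams.
\end{enumerate}

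Several of your other steps also need repair:
\begin{itemize}
\item Yakovlev's theorem determines a lattice only up to summands $W[F/F_i]$ (for $G$, up to trivial source modules). These must be fixed separately, e.g.\ via reduction mod $p$ as the paper does.
\item For $G=F\rtimes C$ with $C$ acting nontrivially on $F$, the idempotents of $W[C]$ are not central in $W[G]$. So the $C$-isotypic pieces are not $W[F]$-modules. The paper instead works with $G$-Yakovlev diagrams and averages a lifted $F$-isomorphism over $G/F$ (Corollary~\ref{cor:G-YD_are_iso}).
\item The reduction to $p$-hypo-elementary groups needs Conlon's induction theorem (Lemma~\ref{lem:conlon}). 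A lattice is not determined by its restriction to $N_G(F)$: for $G=S_3$, $p=2$, both projective indecomposable lattices restrict to $W[F]$ on $F=N_G(F)$.
\end{itemize}
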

Interestingly, in the proof of Theorem~\ref{thm:cyclic_sylow} we use only formal properties of the crystalline cohomology. The key input is
the theory of Yakovlev diagrams, which allows to describe the $W[G]$-modules for a group $G$ with a cyclic $p$-Sylow subgroup. The crucial case in the proof of Theorem~\ref{thm:cyclic_sylow} is $G = F \cong \ZZ/p^n$.
For this group we give an explicit description of the equivariant structure of the crystalline cohomology. Let $F = \langle \sigma \rangle$ and write $F_t$ for the unique subgroup of $F$ of order $p^t$. We define the following
$W[F]$-modules for every $i = 0, \ldots, n$:
\begin{equation*}
	\JJ_i := \ker(W[F] \to W[F/F_i]),
\end{equation*}
where $W[F] \to W[F/F_i]$ is the canonical projection. 
Similarly, we put for every $i = 0, \ldots, n$:
\begin{equation*}
	\SS_i := \ker(W[F] \to W[F/F_i]/W),
\end{equation*}
where $W$ is the image of the map $N_{F/F_i} := \sum_{g \in F/F_i} g : W \to W[F/F_i]$.
It turns out that $\JJ_i$ and $\SS_i$ are free $W$-modules of ranks $p^n - p^{n-i}$ and $p^n - p^{n-i} + 1$ respectively, and are indecomposable as $W[F]$-modules. We give a more explicit description of those modules in Section~\ref{sec:J_and_I}.

Assume that $\pi : X \to Y$ is an $F$-cover of smooth projective curves.
For any $P \in X(\ol k)$ we write $e_P$ and $u_P^{(t)}$,
for the ramification index of~$\pi$ at~$P$ and the $t$-th upper ramification jump of~$\pi$
at~$P$, respectively. Write $e_P = p^{m_P}$ and $m := \max \{ m_P : P \in X(\ol k) \}$. For any $Q \in Y(\ol k)$ we denote also by abuse of notation $e_Q := e_P$, $u_Q^{(t)} := u_P^{(t)}$, $m_Q := m_P$, etc. for arbitrary $P \in \pi^{-1}(Q)$. We put $u_Q^{(0)}:=1$.
\begin{Theorem} \label{thm:crys_cyclic}
Keep the above notation with $F \cong \ZZ/p^n$.
Assume that $\pi : X \to Y$ is an $F$-cover of smooth projective curves over $k$.
Let $B \subset Y(\ol k)$ be the branch set of $\pi$ and $g_Y$ be the genus of $Y$. Pick any point $Q_0 \in Y(\ol k)$ such that $m = m_{Q_0}$. Then as $W[F]$-modules:
\begin{equation*}
	H^1_{\cris}(X) \cong W[F]^{2 (g_Y - 1)} \oplus \SS_m^2 \oplus \bigoplus_{\substack{Q \in B\\ Q \neq Q_0}} \JJ_{m_Q}^2
	\oplus \bigoplus_{Q \in B} \bigoplus_{t = 0}^{m_Q  - 1} \JJ_{m_Q - t}^{u_Q^{(t+1)} - u_Q^{(t)}}.
\end{equation*}
\end{Theorem}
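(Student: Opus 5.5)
We will identify the $W[F]$-module $H^1_{\cris}(X)$ through its Yakovlev diagram. For the cyclic $p$-group $F \cong \ZZ/p^n$, a $W$-free $W[F]$-module $M$ is determined up to isomorphism by the following data:
\begin{itemize}
\item the $W$-modules of invariants $M^{F_t}$, for $t = 0, \ldots, n$;
\item the restriction (inclusion) maps between them;
\item the transfer (norm) maps $N_{F_{t+1}/F_t} : M^{F_t} \to M^{F_{t+1}}$;
\item the quotient actions of $F/F_t$ on each $M^{F_t}$.
\end{itemize}
Concretely, it suffices to determine the Yakovlev diagram up to isomorphism, or equivalently the $W[F/F_{t+1}]$-modules given by the cohomology-type quotients $M^{F_{t+1}}/N_{F_{t+1}/F_t}(M^{F_t})$ together with the maps between consecutive levels. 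So the plan is in two parts. First compute the Yakovlev data of the right-hand side, using the explicit descriptions of $\JJ_i$ and $\SS_i$ from Section~\ref{sec:J_and_I}. Then compute the Yakovlev data of $H^1_{\cris}(X)$ geometrically and match the two.

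\textbf{Geometric input.} For each $t$ we relate $H^1_{\cris}(X)^{F_t}$ to the crystalline cohomology of the intermediate curve $X_t := X/F_t$. Since pullback composed with pushforward along $\pi_t : X \to X_t$ is multiplication by $p^t$, and pushforward composed with pullback is the norm, we get the following:
\begin{itemize}
\item Rationally, $H^1_{\cris}(X)^{F_t} \otimes K \cong H^1_{\cris}(X_t/K)$.
\item Integrally, $\pi_t^* H^1_{\cris}(X_t)$ is a $W$-lattice inside $H^1_{\cris}(X)^{F_t}$.
\item Through this identification, the norm map $N_{F_{t+1}/F_t}$ corresponds to $\pi^*\circ \pi_*$ for the degree-$p$ map $X_t \to X_{t+1}$.
\end{itemize}
The torsion quotients $H^1_{\cris}(X)^{F_t}/\pi_t^*H^1_{\cris}(X_t)$ and $H^1_{\cris}(X)^{F_{t+1}}/N(H^1_{\cris}(X)^{F_t})$ are then analysed as follows. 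Reduce mod $p$ using the universal coefficient sequence $0 \to H^1_{\cris}\otimes k \to H^1_{dR} \to \cdots$. Then invoke the known de Rham result \cite{Garnek_Kontogeorgis_cyclic_de_Rham}, which already encodes the ramification data, i.e.\ the dimensions of $H^1_{dR}(X)^{F_t}$ and the images of the trace maps $H^1_{dR}(X_t)\to H^1_{dR}(X_{t+1})$, expressed via the upper jumps $u_Q^{(t)}$. The key counting step is then this. For $Q$ with $m_Q > t$, the local contribution of $Q$ to $\dim_k \coker(\pi^* \pi_* )$ on level $t$ should equal $u_Q^{(t+1)}-u_Q^{(t)}$ plus the "$2$" coming from the global pair, with one point $Q_0$ absorbing the extra invariant class (this is where $\SS_m$ instead of $\JJ_m$ appears). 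Riemann--Hurwitz, applied to each $X_t \to X_{t+1}$, gives consistency of the ranks: $2g_X = 2p^n(g_Y-1) + \sum_Q \ldots$, which matches the ranks $p^n - p^{n-i}$ and $p^n-p^{n-i}+1$.

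\textbf{Matching.} We compute, for each indecomposable $\JJ_i$, $\SS_i$ and $W[F]$, its invariants $(\cdot)^{F_t}$ and norm cokernels. These are direct-sum friendly, so the right-hand side gives a diagram whose level-$t$ cokernels have $k$-dimensions that are explicit linear combinations of the multiplicities. Since the modules in the decomposition are permutation-like lattices (kernels of maps of permutation modules), each cokernel is killed by $p$ and is a free $k[F/F_{t+1}]$-module or a trivial module. Hence it suffices to match dimensions level by level, plus the one extra trivial summand per level for $t<m$.

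\textbf{Main obstacle.} We expect the main obstacle to be showing that the Yakovlev data of $H^1_{\cris}(X)$ is actually of this "elementary" form, i.e.\ that the torsion cokernels are $p$-torsion and have exactly the right $F/F_{t+1}$-structure. A priori they are $W$-modules of finite length that could have higher exponents. To handle this, we would first try to prove that $\pi^*: H^1_{\cris}(X_{t+1}) \to H^1_{\cris}(X_t)^{F_{t+1}/F_t}$ has cokernel killed by $p$, using the trace identity $\pi_*\pi^* = p$. We would combine this with injectivity of $H^1_{\cris}(X_{t+1})\otimes k\to H^1_{dR}(X_t)$ on the part detected by the de Rham computation, reducing everything to the statements of \cite{Garnek_Kontogeorgis_cyclic_de_Rham} for each layer $X_t\to X_{t+1}$. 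If this works, the classification theorem for Yakovlev diagrams finishes the proof.
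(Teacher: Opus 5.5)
\textbf{There is a genuine gap.}

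Your reduction step is where the argument breaks. The Yakovlev diagram consists of the groups $H^1(F_t,M)=\ker(N_{F_t})/(\sigma_t-1)M$ together with $\res$ and $\cores$. The quotients you propose, $M^{F_{t+1}}/N_{F_{t+1}/F_t}(M^{F_t})=\hat H^0(F_{t+1}/F_t,M^{F_t})$, are not equivalent to it. We have $W[F]^{F_t}\cong W[F/F_t]$, $\JJ_i^{F_t}\cong\JJ_{i-t}$, and $H^2(\cdot,\JJ_j)=0$ (Proposition~\ref{prop:Delta_J_otimes_k}). Hence your quotients vanish identically for $W[F]$ and for every $\JJ_i$, so they cannot see most summands of the formula, whereas $\Delta(\JJ_i)_t\cong W_{\min(i,t)}[F/F_{\max(i,t)}]\neq 0$. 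Once you use the true diagram, your matching step collapses. Its terms are not killed by $p$ (they have exponent $p^{\min(i,t)}$), so comparing $k$-dimensions level by level does not determine $\Delta(\mc M)$. Even proving $\Delta(\mc M)\otimes k\cong\Delta(\MM)\otimes k$ would not suffice. The ``main obstacle'' you set yourself, showing that this data is $p$-torsion, aims at a false statement. Moreover, even an isomorphism of Yakovlev diagrams determines the lattice only up to permutation summands $W[F/F_i]$ (Theorem~\ref{thm:yako1}). A final cancellation step is therefore needed; the paper reduces mod $p$ and uses Krull--Schmidt for $k[F]$-modules and for $W[F]$-lattices.

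What is missing is a mechanism that controls the integral structure of $H^1(F_t,\mc M)$. The paper gets it by induction on $n$, in three steps.

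(i) It proves $\Delta(\mc M)\otimes k\cong\Delta(\MM)\otimes k$ by controlling $H^2(F_t,\mc M)[p]$ in the group-cohomological universal coefficient sequence (Lemma~\ref{lem:Tor_ses}). This group vanishes for $t\le m$, because the trace is surjective and $\pi^*$ maps $H^1_{\cris}(Y)$ onto all of $H^1_{\cris}(X)^F$ (Lemma~\ref{lem:invariants}). That is an equality, not a sublattice of torsion index as in your plan. For $t>m$ the group is $k^{\oplus 2}$, computed via the \'{e}tale quotient $X/F_m$. The resulting truncation at level $m$ is where $\SS_m$ comes from, not a heuristic count with $Q_0$ ``absorbing'' a class. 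Note also that for curves $H^1_{\cris}(X)\otimes k\to H^1_{dR}(X)$ is an isomorphism, so the universal coefficient sequence you invoke carries no information; the relevant one is in group cohomology.

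(ii) Injectivity of $\beta_t\otimes k$ then gives $p\,\mc D_{t+1}=\ker(\mc D_{t+1}\to\mc D_1)=\Fix_{F_1}(\mc D)_t\cong\Delta(H^1_{\cris}(X/F_1))_t$. The right-hand side is known by the induction hypothesis.

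(iii) Knowing $p\,\mc D_t$ and $\mc D_t\otimes k$ fixes $\mc D_t$ as a $W$-module (Lemma~\ref{lem:pM_and_M/p_determines_M}). The mod-$p$ isomorphism of diagrams is then lifted to an integral morphism using the special shape of $\DD$ (Proposition~\ref{prop:lifting_F_automorphisms}). That morphism is an isomorphism by Nakayama's lemma and a length count.

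Your layer-by-layer use of the $\ZZ/p$ results has no substitute for steps (ii) and (iii).
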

\noindent Note that this formula is well-defined for $g_Y = 0$, even though the first exponent is negative. Indeed, since in this case $m = n$ (as $\PP^1$ doesn't have any \'{e}tale covers), we have $\SS_m = W[F]$ and the first two summands cancel out.
The proof of Theorem~\ref{thm:crys_cyclic} is inductive. One 
uses the inductive hypothesis for the curve $X' := X/F_1$ with the action of $F/F_1$. 
Other important ingredients are the trace map $\tr_{X/Y} : H^1_{\cris}(X) \to H^1_{\cris}(Y)$ and its surjectivity, and the fact that we know the equivariant structure of $H^1_{\cris}(X) \otimes k \cong H^1_{dR}(X)$.
\subsection*{Outline of the paper}
Section~\ref{sec:notation} contains some notation and preliminaries concerning algebraic
curves and modules over~$W$. In Section~\ref{sec:yd} we review the group cohomology
for $W[F]$-modules and the theory of Yakovlev diagrams. 
The next section discusses the proof of the main results of Yakovlev, and explains
how to extend them to the case of Witt vectors. In Section~\ref{sec:J_and_I} we discuss various properties of the $W[F]$-modules $\JJ_i$ and~$\SS_i$, including their Yakovlev diagrams and their reductions.
Sections~\ref{sec:pf_of_cyclic} and~\ref{sec:pf_of_cyclic2} are devoted to the proofs of Theorem~\ref{thm:crys_cyclic} and Theorem~\ref{thm:cyclic_sylow}.

\subsection*{Acknowledgements} The author wishes to express his thanks to Aristides Kontogeorgis and Alexios Terezakis for many stimulating conversations.
The author also expresses his sincere gratitude to his wife and to Wojciech Gajda for their constant support. SageMath was used for computations mostly related to linear algebra. GPT-5.5 was used to detect several errors in earlier versions of the manuscript, as well as to help simplify the proof of Proposition~\ref{prop:lifting_F_automorphisms}.
The author was supported by the research grant SONATA 20 ``Symmetries of curves in positive characteristic'' UMO-2024/55/D/ST1/01377 awarded by the National Science Centre, Poland.

\section{Notation and preliminaries} \label{sec:notation}
Let $G$ be a finite group.
Assume that $\pi : X \to Y$ is a $G$-cover of smooth projective curves over a~perfect field $k$
of characteristic $p$.
Throughout the paper, we will use the following notation for any $P \in X(\ol k)$:
\begin{itemize}
	\item $e_{X/Y, P}$ is the ramification index at $P$,
	
	\item $m_{X/Y, P} := \ord_p(e_{X/Y, P})$ is the $p$-adic valuation of the ramification index at~$P$,
	
	\item $m_{X/Y} := \max \{ m_{X/Y, P} : P \in X(\ol k) \}$,
	
	\item $u_{X/Y, P}^{(t)}$ is the $t$-th upper ramification jump
	at $P$ for $1 \le t \le m_{X/Y, P}$,
	
	\item $u^{(0)}_{X/Y, P} := 1$ for any ramified point $P \in X(\ol k)$
	(note that this is not a standard convention),
	
	\item $u_{X/Y, P} := u_{X/Y, P}^{(m_{X/Y, P})}$ is the last ramification jump.
\end{itemize}
By the Hasse--Arf theorem (cf. \cite[p. 76]{Serre1979}),
if the inertia subgroup at~$P$ is abelian, the numbers $u_{X/Y, P}^{(t)}$ are integers.
For any $Q \in Y(\ol k)$ we denote also by abuse of notation $e_{X/Y, Q} := e_{X/Y, P}$,
$u_{X/Y, Q}^{(t)} := u_{X/Y, P}^{(t)}$, etc. for arbitrary $P \in \pi^{-1}(Q)$.
Let
\[
B_{X/Y} := \{ Q \in Y(\ol k) : e_{X/Y, Q} > 1 \}
\]
be the ramification locus of $\pi$.\\ 

We write $W := W(k)$ for the ring of Witt vectors of infinite length over $k$
and put $W_i := W/p^i W$. Recall that any $W$-module $M$ of finite length is isomorphic to a module of the form $\bigoplus_{i = 1}^n W_{a_i}$ for some uniquely determined integers $1 \le a_1 \le a_2 \le \cdots \le a_n$ (this follows e.g. from \cite[Theorem~12.1.5]{DummitFoote2004}). For any $W$-module $M$
we write $M[p^i] := \{m \in M : p^i \cdot m = 0 \}$. The following simple lemma
will be useful in the sequel.

\begin{Lemma} \label{lem:pM_and_M/p_determines_M}
	Let $M_1$, $M_2$ be $W$-modules of finite length. If $p \cdot M_1 \cong p \cdot M_2$
	and $\dim_k (M_1 \otimes_W k) = \dim_k (M_2 \otimes_W k)$ then $M_1 \cong M_2$.
\end{Lemma}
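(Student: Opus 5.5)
We will use the structure theorem recalled just before the statement: write $M_1 \cong \bigoplus_{i=1}^{r} W_{a_i}$ and $M_2 \cong \bigoplus_{j=1}^{s} W_{b_j}$ with $a_i, b_j \ge 1$. A finite length $W$-module is determined up to isomorphism by its multiset of exponents, so it suffices to recover this multiset from the two invariants in the hypothesis.

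The first step reads off the number of summands from the dimension hypothesis. Since $W_a \otimes_W k \cong k$ for every $a \ge 1$, we get $\dim_k(M_1 \otimes_W k) = r$ and $\dim_k(M_2 \otimes_W k) = s$. The hypothesis therefore gives $r = s$.

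The second step computes $p \cdot M$ summand by summand. We have $p \cdot W_a = pW/p^aW \cong W_{a-1}$, where $W_0 = 0$. Hence
\[
p \cdot M_1 \cong \bigoplus_{i=1}^{r} W_{a_i - 1} \quad\text{and}\quad p \cdot M_2 \cong \bigoplus_{j=1}^{r} W_{b_j - 1}.
\]
By the uniqueness part of the structure theorem, $p\cdot M_1 \cong p\cdot M_2$ means the multisets $\{a_i - 1 : a_i \ge 2\}$ and $\{b_j - 1 : b_j \ge 2\}$ agree. So for every $c \ge 2$, the number of indices with $a_i = c$ equals the number with $b_j = c$. Because the totals satisfy $r = s$, the number of indices with exponent equal to $1$ also agrees.

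The exponent multisets of $M_1$ and $M_2$ therefore coincide, and $M_1 \cong M_2$. The only point needing care is the summands $W_1$, which are killed by multiplication by $p$ and so are invisible in $p\cdot M$. This is exactly why the dimension hypothesis is required: it counts those summands.
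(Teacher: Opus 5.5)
Your proof is correct and is essentially the paper's argument: decompose both modules into cyclic summands, use $p \cdot W_a \cong W_{a-1}$ and the uniqueness of the decomposition to match the summands of length at least $2$, then use $\dim_k(M \otimes_W k) = $ (number of summands) to match the copies of $W_1$. The paper does the same thing, only splitting off the $W_1$-summands from the start and writing $M_1 \cong W_1^a \oplus \bigoplus_i W_{a_i}$ with $a_i \ge 2$.
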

\begin{proof}
	Assume that $M_1 \cong W_1^a \oplus \bigoplus_{i = 1}^n W_{a_i}$ and $M_2 \cong W_1^b \oplus \bigoplus_{i = 1}^m W_{b_i}$, where $2 \le a_1 \le a_2 \le \cdots \le a_n$ and $2 \le b_1 \le b_2 \le \cdots \le b_m$. Then:
	\[
		\bigoplus_{i = 1}^n W_{a_i - 1} \cong p \cdot M_1 \cong p \cdot M_2 \cong \bigoplus_{i = 1}^m W_{b_i - 1},
	\]
	which yields $n = m$ and $(a_1, \ldots, a_n) = (b_1, \ldots, b_m)$. On the other hand,
	$a + n = \dim_k (M_1 \otimes_W k) = \dim_k (M_2 \otimes_W k) = b + m$. Hence $a = b$ and $M_1 \cong M_2$.
\end{proof}
\section{Yakovlev diagrams} \label{sec:yd}
Let $F$ be a cyclic group of order $p^n$ generated by $\sigma$. Write $F_t$ for its unique subgroup of order $p^t$ for $t = 0, \ldots, n$ and
let $\sigma_t := \sigma^{p^{n-t}}$ be its generator. We often abbreviate $F' := F/F_1$.
Write also $\Mod_{W[F]}$ (resp. $\Mod_{W[F]}^{\mathrm{ff}}$) for the category of $W[F]$-modules that are finitely generated over~$W$
(resp. $W[F]$-modules, that are finite free as $W$-modules).
For any group $G$, we write $N_G := \sum_{g \in G} g \in W[G]$. Let also $N_{F_{t+1}/F_t} := \sum_{i = 0}^{p-1} \sigma_{t+1}^i \in W[F]$. Note that
$N_{F_{t+1}} = N_{F_{t+1}/F_t} \cdot N_{F_t}$. \\

Let $G$ be a finite group with a normal subgroup $H$ and let $M$ be any $W$-module.
Identifying $\Ind^G M \cong \bigoplus_{g \in G} g \cdot M$, the quotient map
$q_{G, H} : G \to G/H$ induces a map
\[
	\Ind^G M \to \Ind^{G/H} M, \qquad \sum_{g \in G} g \cdot m_g \mapsto \sum_{g \in G} q_{G, H}(g) \cdot m_g,
\]
which we also denote by $q_{G, H}$. Similarly, there exists a map $\rho_{G, H} : \Ind^{G/H} M \to \Ind^G M$ defined
by $\ol g \cdot x \mapsto \sum_{g \in q_{G, H}^{-1}(\ol g)} g \cdot x$.
\subsection{Group cohomology}
We now recall some basic facts concerning the group cohomology of a $W[F]$-module $M$; see \cite{Serre1979} and \cite[Chapter~6]{Weibel} for a reference. The group homology and cohomology of $M$ are periodic, i.e. for any $i > 0$:
\begin{equation} \label{eqn:periodicity}
	H^i(F, M) \cong H_{i+1}(F, M) \cong
	\begin{cases}
		H^1(F, M), & 2 \nmid i,\\
		H^2(F, M), & 2 \mid i,
	\end{cases}
\end{equation}
(see \cite[VIII \S 4, p. 133]{Serre1979}). Moreover, we have:
\begin{equation}
	H^1(F_t, M) = \frac{\ker(N_{F_t} : M \to M)}{(\sigma_t - 1) M}. \label{eqn:H1_definition}
\end{equation}
Under the identification~\eqref{eqn:H1_definition}, the restriction and corestriction maps
are given by the formulas:
\begin{align*}
	\res \colon H^1(F_{t+1}, M) &\longrightarrow H^1(F_t, M),
	&&[x] \longmapsto [N_{F_{t+1}/F_t} \cdot x], \\
	\cores \colon H^1(F_t, M) &\longrightarrow H^1(F_{t+1}, M),
	&&[x] \longmapsto [x],
\end{align*}
where $[m]$ denotes the image of $m \in M$ in $H^1(F_t, M)$. Note that $\cores \circ \res = p$, while $\res \circ \cores = N_{F_{t+1}/F_t}$. Similarly,
one can write:
\begin{equation}
	H^2(F_t, M) = M^{F_t}/N_{F_t} M, \label{eqn:H2_definition}
\end{equation}
with the restriction and corestriction maps given by the formulas:
\[
\begin{aligned}
	\res \colon H^2(F_{t+1}, M) &\longrightarrow H^2(F_t, M), \\
	[x] &\longmapsto [x], \\[10pt]
	\cores \colon H^2(F_t, M) &\longrightarrow H^2(F_{t+1}, M), \\
	[x] &\longmapsto [N_{F_{t+1}/F_t} \cdot x].
\end{aligned}
\]
Recall that $F_t$ acts on $H^i(F_t, M)$ trivially (cf. \cite[Proposition~3, p.~116]{Serre1979}). Moreover $p^t = |F_t|$ kills $H^i(F_t, M)$. Thus $H^i(F_t, M)$ becomes a $W_t[F/F_t]$-module.
\begin{Lemma} \label{lem:Tor_ses}
	For any $M \in \Mod_{W[F]}^{\mathrm{ff}}$ there exists a short exact sequence:
	\[
	0 \to H^1(F, M) \otimes_W k \to H^1(F, M \otimes_W k) \to H^2(F, M)[p] \to 0.
	\]
\end{Lemma}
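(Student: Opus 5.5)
We will obtain the sequence as the universal coefficient sequence for the reduction mod $p$. Since $M$ is free over $W$, multiplication by $p$ is injective on $M$, so we have a short exact sequence of $W[F]$-modules
\[
0 \to M \xrightarrow{\;p\;} M \to M \otimes_W k \to 0.
\]
Taking group cohomology of $F$ yields a long exact sequence
\[
\cdots \to H^1(F, M) \xrightarrow{\;p\;} H^1(F, M) \to H^1(F, M \otimes_W k) \xrightarrow{\;\delta\;} H^2(F, M) \xrightarrow{\;p\;} H^2(F, M) \to \cdots,
\]
where the maps induced by $p$ are again multiplication by $p$, by functoriality and $W$-linearity of cohomology.

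From this we read off $\coker(p : H^1(F,M) \to H^1(F,M)) = H^1(F,M)/pH^1(F,M) = H^1(F,M)\otimes_W k$, which injects into $H^1(F, M\otimes_W k)$. Exactness at $H^1(F, M \otimes_W k)$ identifies the cokernel of this injection with $\im \delta = \ker(p : H^2(F,M)\to H^2(F,M)) = H^2(F,M)[p]$. Splicing these two facts gives exactly the claimed short exact sequence.

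The only points needing care are routine. First, the long exact sequence must exist in the degrees used. This is standard for $H^i(F,-)$ with $i \ge 1$ (here $H^2$ is the honest group cohomology, which under \eqref{eqn:H2_definition} agrees with Tate cohomology in positive degrees), so no boundary issues in degree $0$ arise. Second, the identification of the map induced by $p$ with multiplication by $p$ is immediate, since $p$ is a central scalar. If one prefers the explicit descriptions \eqref{eqn:H1_definition} and \eqref{eqn:H2_definition}, one can instead check directly that $\delta([\bar x])=[N_F x]$ for a lift $x$ of $\bar x$. Indeed, if $N_F \bar x = 0$ then $N_F x = p y$ with $y \in M^F$, and $\delta([\bar x]) = [y]$. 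This description is compatible with the above and confirms surjectivity onto $H^2(F,M)[p]$.

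I expect no real obstacle here; the step most worth double-checking is the exactness at $H^2(F,M)$, namely that $\im\delta$ equals the full $p$-torsion, which follows from the long exact sequence.
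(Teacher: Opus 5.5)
Your argument is correct, but it takes a different route from the paper. The paper cites the universal coefficient theorem for group \emph{homology}, $0 \to H_i(F,M)\otimes_W k \to H_i(F, M\otimes_W k) \to \Tor^1_W(H_{i-1}(F,M),k) \to 0$. It then uses the periodicity $H^i(F,-)\cong H_{i+1}(F,-)$ for cyclic $F$ to rewrite $H_2$ as $H^1$ and $H_1$ as $H^2$. You instead stay in cohomology. You take the Bockstein long exact sequence of $0\to M\xrightarrow{p}M\to M\otimes_W k\to 0$ and read off the cokernel of $p$ on $H^1(F,M)$ and the kernel of $p$ on $H^2(F,M)$.

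Your route is more self-contained and more general. It never uses that $F$ is cyclic, so it gives $0\to H^i(G,M)\otimes_W k\to H^i(G,M\otimes_W k)\to H^{i+1}(G,M)[p]\to 0$ for any finite group $G$, any $W$-free $M$ and any $i\ge 0$. Because restriction and corestriction are morphisms of $\delta$-functors, it also makes transparent that the sequence is natural with respect to these maps. The paper uses this naturality tacitly later, in the commutative diagrams comparing these sequences for $F_m \subset F_t$. The paper's route is shorter given the citation, but it needs periodicity to pass between homology and cohomology.

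One small slip in your optional explicit check: $\delta([\bar x])$ is $[y]$ where $p y = N_F x$, not $[N_F x]$. The latter is always zero in $M^F/N_F M$. Your next sentence states the correct formula, so this does not affect the argument.
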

\begin{proof}
	The universal coefficient theorem for the group homology (cf. \cite[\S 3.6]{Weibel}) implies that for any $i > 0$
	\[
	0 \to H_i(F, M) \otimes_W k \to H_i(F, M \otimes_W k) \to \Tor^1_W(H_{i-1}(F, M), k) \to 0.
	\]
	Moreover, $\Tor^1_W(H_{i-1}(F, M), k) \cong H_{i-1}(F, M)[p]$.
	The proof follows by~\eqref{eqn:periodicity}.
\end{proof}
\subsection{$F$-Yakovlev diagrams}
It turns out that knowing the first group cohomologies of a $W[F]$-module along with the restriction and corestriction maps is enough to recover it up to certain well-understood summands.
\begin{Definition}
	An \bb{$F$-Yakovlev diagram over $W$} is a diagram of the form:
	\begin{equation} \label{eqn:yd}
		\begin{tikzcd}
			\mc D_{1} \arrow[r, "\alpha_{1}", shift left] & \mc D_{2} \arrow[l, "\beta_{1}", shift left] \arrow[r, "\alpha_{2}", shift left] & \cdots \arrow[l, "\beta_{2}", shift left] \arrow[r, "\alpha_{n-2}", shift left] & \mc D_{n-1} \arrow[l, "\beta_{n-2}", shift left] \arrow[r, "\alpha_{n-1}", shift left] & \mc D_{n} \arrow[l, "\beta_{n-1}", shift left]
		\end{tikzcd}
	\end{equation}
	where $\mc D_t$ is a $W_t[F/F_t]$-module of finite $W$-length and $\alpha_t, \beta_t$
	are homomorphisms of $W[F]$-modules satisfying:
	\begin{equation} \label{eqn:alpha_beta_and_beta_alpha}
		\alpha_t \circ \beta_t = p, \qquad \beta_t \circ \alpha_t = N_{F_{t+1}/F_t}.
	\end{equation}
\end{Definition}
Let $\mc D = (\mc D_t, \alpha_t, \beta_t)$ and $\mc D' = (\mc D_t', \alpha_t', \beta_t')$
be $F$-Yakovlev diagrams over~$W$. A morphism $\Phi : \mc D \to \mc D'$ is a collection of $W[F]$-homomorphisms $\Phi_i : \mc D_i \to \mc D_i'$ for $1 \le i \le n$ such that $\Phi_{t+1} \circ \alpha_t = \alpha_t' \circ \Phi_t$ and $\Phi_t \circ \beta_t = \beta_t' \circ \Phi_{t+1}$ for any $1 \le t < n$.
Therefore one may define the category $\YD_W(F)$ of $F$-Yakovlev diagrams over $W$.
One easily checks that $\YD_W(F)$ is abelian and that every object of $\YD_W(F)$
is of finite length. Hence, by \cite[Lemma~5.1, Theorem~5.5]{Krause_KS_categories}, $\YD_W(F)$ is a Krull--Schmidt category (i.e. every object has a unique decomposition into a direct sum of indecomposable objects).\\

By the previous discussion, for any $W[F]$-module $A$ of finite $W$-length the diagram $\Delta(A)$
\begin{equation*}
	\begin{tikzcd} 
		H^1(F_1, A) \arrow[r, "\cores", shift left] & H^1(F_2, A) \arrow[l, "\res", shift left] \arrow[r, "\cores", shift left] & \cdots \arrow[l, "\res", shift left] \arrow[r, "\cores", shift left] & H^1(F_n, A) \arrow[l, "\res", shift left]
	\end{tikzcd}
\end{equation*}
is an $F$-Yakovlev diagram over $W$. Clearly, in this way we obtain a functor $\Delta : \Mod_{W[F]} \to \YD_W(F)$. The following result is basically \cite[Theorem~4]{Yakovlev_basis_powers}. We discuss it in the next section.
\begin{Theorem} \label{thm:yako1}
	If $M_1$, $M_2$ are objects in $\Mod_{W[F]}^{\mathrm{ff}}$ satisfying $\Delta(M_1) \cong \Delta(M_2)$
	then for some $a_i, b_i \ge 0$:
	\[
		M_1 \oplus \bigoplus_{i = 0}^n W[F/F_i]^{a_i}
		\cong M_2 \oplus \bigoplus_{i = 0}^n W[F/F_i]^{b_i}.
	\]
\end{Theorem}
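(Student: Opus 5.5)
We plan to follow Yakovlev's strategy and transport it from $\ZZ_p$ to $W$. The argument is an induction on $n$, using the unramified extension $W/\ZZ_p$ only through formal properties of complete DVRs with residue field $k$.

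The induction rests on the exact sequence
\[
0 \to M^{F_1} \to M \to M/M^{F_1} \to 0 .
\]
For $M \in \Mod_{W[F]}^{\mathrm{ff}}$, the module $M^{F_1}$ is a $W[F/F_1]$-lattice. The quotient $M/M^{F_1}$ is torsion-free, since $M^{F_1}$ is saturated. Moreover $\sigma_1$ acts on it without fixed vectors, so it is a module over $W[F]/(N_{F_1})$. The plan has four steps.

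\textbf{Step 1 (cohomology at the bottom level).} Show that $\Delta(M)$ determines the $F/F_1$-diagram $\Delta(M^{F_1})$, up to the contribution of induced modules. It should also determine the $F_1$-cohomology data, namely $H^1(F_1,M)$ and $H^2(F_1,M)=M^{F_1}/N_{F_1}M$, together with their $F/F_1$-action.

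\textbf{Step 2 (reconstruction).} Use the computation of $H^i(F_t,-)$ via~\eqref{eqn:H1_definition} and~\eqref{eqn:H2_definition}, and the relations $\cores\circ\res=p$ and $\res\circ\cores=N$. From these, build an explicit "canonical" lattice $\Lambda(\mc D)$ attached to a Yakovlev diagram $\mc D$. Then show that any $M$ with $\Delta(M)\cong\mc D$ is stably isomorphic to $\Lambda(\mc D)$, that is, isomorphic after adding summands $W[F/F_i]$.

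\textbf{Step 3 (comparing two lattices).} Reduce to lifting the given isomorphism of diagrams $\Delta(M_1)\cong\Delta(M_2)$ to a homomorphism $f\colon M_1\oplus P_1\to M_2\oplus P_2$ with $P_1,P_2$ permutation modules $\bigoplus_i W[F/F_i]^{c_i}$. The key facts are the following.
\begin{enumerate}[(a)]
\item Each $W[F/F_i]$ is cohomologically trivial for $F_t$, $t\le i$, in the relevant sense. Its $\Delta$ is the known elementary diagram, $H^1=0$, so adding it does not change $\Delta$. More precisely, $\Delta(W[F/F_i])=0$ because $H^1(F_t,\Ind^{F}_{F_i}W)=\Hom(F_t\cap F_i,W)=0$. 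Hence permutation summands can be added freely.
\item A homomorphism between lattices that induces isomorphisms on all $H^1(F_t,-)$ and $H^2(F_t,-)$, compatibly with $\res$ and $\cores$, becomes an isomorphism after adding permutation modules to its source and target. This is argued by induction using the sequence above and Nakayama's lemma over $W$.
\end{enumerate}
By periodicity~\eqref{eqn:periodicity}, $H^2(F_t,M)\cong H_1(F_t,M)$ is controlled by $H^1$ of the dual lattice $M^\vee=\Hom_W(M,W)$. Therefore I would also need $\Delta(M)$ to determine $\Delta(M^\vee)$. Over a DVR this follows from Tate duality: $\hat H^i(F_t,M^\vee)$ is the Pontryagin-type dual of $\hat H^{-i}(F_t,M)$, with $\res$ and $\cores$ interchanged. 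The target $\Hom_W(-,K/W)$ is available over $W$ exactly as over $\ZZ_p$.

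\textbf{Step 4 (induction).} Apply the inductive hypothesis to $F/F_1$ acting on $M^{F_1}$. Then glue using extension classes in $\mathrm{Ext}^1_{W[F]}(M/M^{F_1},M^{F_1})$, which are governed by $H^1(F_1,-)$ data. A Schanuel-type argument adds the permutation modules $W[F/F_i]^{a_i}$ and $W[F/F_i]^{b_i}$ needed to absorb the discrepancies in the choices of lifts.

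The main obstacle is Step~3(b), the lifting and gluing argument. Yakovlev's original proof uses explicit manipulations of bases of $\ZZ_p[F]$-lattices. The point to check is that these use only the following inputs:
\begin{itemize}
\item $W$ is a complete DVR with uniformizer $p$;
\item $W[F]/(N_{F_t})$ and the rings $W[\zeta_{p^t}]$ are local or semilocal in the same way. Since $W/\ZZ_p$ is unramified, $W[\zeta_{p^t}]$ is still a totally ramified DVR over $W$.
\end{itemize}
I would first try to verify the statement by redoing Yakovlev's argument with $\ZZ_p$ replaced by $W$ throughout. If some step uses finiteness of the residue field, I would replace it by a Nakayama or Krull--Schmidt argument in the finite-length category $\YD_W(F)$, which was noted above to be Krull--Schmidt.
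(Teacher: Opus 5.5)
There is a genuine gap, and it sits exactly where the theorem has content.

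\textbf{The $H^2$ problem.} Your Step~3(b) needs a homomorphism inducing isomorphisms on both $H^1(F_t,-)$ and $H^2(F_t,-)$. That step itself is fine: after adding a free module to make the map surjective, the kernel has vanishing Tate cohomology, hence is free and splits off. But $\Delta$ records only $H^1$, and $H^2$ is not determined by it. For $F=\ZZ/p$ the lattices $W$ and $W[F]$ both have $\Delta=0$, while $H^2(F,W)=k$ and $H^2(F,W[F])=0$. So the claim in Step~1 that $\Delta(M)$ determines $H^2(F_1,M)$ is false.

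Your duality repair does not work either. Tate duality for lattices gives $\hat H^j(F_t,M^\vee)\cong \hat H^{-j}(F_t,M)^\vee$. This shows that $\Delta(M^\vee)$ is dual to $\Delta(M)$. However, $H_1(F_t,M)=\hat H^{-2}(F_t,M)$ is dual to $\hat H^{2}(F_t,M^\vee)$, not to $H^1(F_t,M^\vee)$; the same example, with $W^\vee\cong W$, shows $H^1$ of the dual cannot see $H^2$. Relatedly, $W[F/F_i]$ is not cohomologically trivial for $F_t$: $\Delta(W[F/F_i])=0$, but $H^2(F_t,W[F/F_i])\neq 0$ for $t,i\ge 1$.

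The mismatch in $H^2$ is precisely what the summands $W[F/F_i]^{a_i}$, $W[F/F_i]^{b_i}$ must absorb. Your outline gives no mechanism for deciding which of them to add. Nor does it say how to produce, from an abstract isomorphism $\Delta(M_1)\cong\Delta(M_2)$, any module homomorphism $M_1\to M_2$ inducing it. In the paper this lifting is Corollary~\ref{cor:existence_of_F_morphism}, and it is itself a consequence of the main construction.

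\textbf{Steps 2 and 4, and the missing idea.} These steps do not fill the gap. ``Build a canonical lattice $\Lambda(\mc D)$ and show every $M$ with $\Delta(M)\cong\mc D$ is stably isomorphic to it'' is a restatement of the theorem. In the gluing step, $M/M^{F_1}$ is a free $W[\zeta_{p^n}]$-module, and $\mathrm{Ext}^1_{W[F]}(M/M^{F_1},M^{F_1})$ is a sum of copies of $M^{F_1}/pM^{F_1}$. You would have to show that the extension class, up to automorphisms on both sides, is determined by $\Delta(M)$. This must hold even though the inductive hypothesis only determines $M^{F_1}$ up to permutation summands, which changes this Ext group. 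That is the whole difficulty, and it is not addressed.

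The paper, following Yakovlev, supplies the missing idea. From $\mc D$ it builds a minimal module $B$ level by level along the filtration $B^{(t)}=\ker N_{F_t}$. At each stage it adjoins free generators $x_l$ with $N_{F_{i+1}/F_i}x_l$ prescribed by a minimal system of generators of $\mc W_i=\mc D_{i+1}\times_{\mc D_i}B_i$ relative to $\mc V_i$. It then proves the universal property~(7) of Proposition~\ref{prop:YD_induction}: any module realizing $\mc D$ contains $B$ as a direct summand with a complement of an explicitly known shape. Applying this to $M_1$ and $M_2$ gives the theorem.

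You correctly identified the only ring-theoretic input, namely that $W[F]/N_{F_{i+1}/F_i}W[F]\cong W[\zeta_{p^{n-i}}]$ is local. But that locality is used in the minimality arguments of this construction, which your plan does not contain.
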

\subsection{Operations on Yakovlev diagrams}
In the sequel we use the following notation for an $F$-Yakovlev diagram $(\mc D_t, \alpha_t, \beta_t)$ and $1 \le i \le j \le n$:
\begin{align*}
	\alpha_{i, j} := \alpha_{j-1} \circ \alpha_{j-2} \circ \cdots \circ \alpha_i : \mc D_i \to \mc D_j,\\
	\beta_{i, j} := \beta_i \circ \beta_{i+1} \circ \cdots \circ \beta_{j-1} : \mc D_j \to \mc D_i
\end{align*}
(in particular, $\alpha_{i, i} = \beta_{i, i} = \id$).
We now discuss several operations on Yakovlev diagrams. Firstly,
note that there is an obvious notion of a direct sum of Yakovlev diagrams,
given by summing the terms and maps of the sequences.
For any $\mc D \in \YD_W(F)$ we write $\mc D \otimes k$ for the Yakovlev diagram
\[
(\mc D \otimes k)_t := \mc D_t \otimes k
\]
with the maps induced by the maps on $\mc D$. 
For any $1 \le i \le n$ we define also the 
$F$-Yakovlev diagram $\mc D^{(i)}$ as follows:
\begin{equation*}
	\mc D^{(i)}_t := 
	\begin{cases}
		\mc D_t, & t \le i,\\
		\im(\alpha_{i, t} : \mc D_i \to \mc D_t), & t > i,
	\end{cases}
\end{equation*}
with the transition maps $\alpha_t|_{\mc D^{(i)}_t}$ and $\beta_t|_{\mc D^{(i)}_{t+1}}$. We also define $\mc D^{(0)}$ to be the zero Yakovlev diagram. The next two operations come naturally
from standard operations on $W[F]$-modules. For any $\mc D \in \YD_W(F_T)$ define the induced 
Yakovlev diagram $\Ind^{F}_{F_T} \mc D \in \YD_W(F)$ as follows:
\[
(\Ind^{F}_{F_T} \mc D)_t :=
\begin{cases}
	\Ind^{F}_{F_T} \mc D_t, & t \le T,\\
	\Ind^{F/F_t} \mc D_T, & t \ge T,
\end{cases}
\]
(note that for $t = T$ both descriptions are equal).
We define the transition maps in $\Ind^{F}_{F_T} \mc D$ as follows:
\begin{itemize}
	\item for $t \le T-1$ the maps
	\[
	\Ind^{F}_{F_T} \mc D_t \leftrightarrows \Ind^{F}_{F_T} \mc D_{t+1}
	\]
	are the maps induced from $\mc D_t \leftrightarrows \mc D_{t+1}$,
	
	\item for $t \ge T$ the maps
	\[
	\Ind^{F/F_t} \mc D_T \leftrightarrows \Ind^{F/F_{t+1}} \mc D_T
	\]
	are of the form $q_{G, H}$ and $\rho_{G, H}$ for $G = F/F_t$, $H = F_{t+1}/F_t$.
\end{itemize}
\begin{Lemma} \label{lem:induced_YD}
	For any $M \in \Mod_{W[F_T]}$ and any $i \in \{ 1, 2 \}$:
	\begin{equation*}
		H^i(F_t, \Ind^{F}_{F_T} M)
		\cong
		\begin{cases}
			\Ind^{F}_{F_T} H^i(F_t, M), & t \le T,\\
			\Ind^{F/F_t} H^i(F_T, M), & t > T.
		\end{cases}
	\end{equation*}
	In particular, for any $M \in \Mod_{W[F_T]}$ we have an isomorphism of $F$-Yakovlev diagrams $\Ind^{F}_{F_T} (\Delta(M)) \cong \Delta(\Ind^{F}_{F_T} M)$.
\end{Lemma}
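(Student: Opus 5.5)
The plan is to treat the two ranges of $t$ separately, using Shapiro's lemma for $t \ge T$ and a Mackey-type decomposition for $t \le T$. Afterwards we check that the restriction and corestriction maps match the transition maps of $\Ind^{F}_{F_T}(\Delta(M))$. Throughout, write $\Ind^F_{F_T} M = \bigoplus_{g \in F/F_T} g \cdot M$ and use the explicit descriptions \eqref{eqn:H1_definition} and \eqref{eqn:H2_definition}.

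\emph{Case $t \le T$.} Since $F$ is abelian, $F_t \subseteq F_T$ acts on each summand $g \cdot M$ as it acts on $M$, via $\sigma_t (g m) = g (\sigma_t m)$. Hence $\Ind^F_{F_T} M \cong \bigoplus_{g \in F/F_T} g M$ as $W[F_t]$-modules. Both $\ker N_{F_t}$ and $(\sigma_t - 1)$ respect this decomposition, and so does $M^{F_t}/N_{F_t}M$. This gives
\[
H^i(F_t, \Ind^F_{F_T} M) \cong \bigoplus_{g} g \cdot H^i(F_t, M) = \Ind^F_{F_T} H^i(F_t, M),
\]
and the $F$-action permuting the summands matches the induced action. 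Restriction and corestriction (multiplication by $N_{F_{t+1}/F_t}$, or the identity on classes) act summandwise. They are therefore the induced maps, as long as $t+1 \le T$.

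\emph{Case $t > T$.} Here $F_t \supseteq F_T$. Choose coset representatives $h_1,\dots,h_{p^{t-T}}$ of $F_t/F_T$. As an $F_t$-module, $\Ind^F_{F_T} M \cong \bigoplus_{\bar g \in F/F_t} g \cdot \Ind^{F_t}_{F_T} M$. Shapiro's lemma then gives $H^i(F_t, \Ind^{F_t}_{F_T} M) \cong H^i(F_T, M)$, so $H^i(F_t, \Ind^F_{F_T} M) \cong \Ind^{F/F_t} H^i(F_T, M)$. To get the maps it is safer to make Shapiro explicit in our models. For $H^2$, the invariants of $\Ind^{F_t}_{F_T} M$ are the elements $\sum_j h_j x$ with $x \in M^{F_T}$, and $N_{F_t}(\sum_j h_j m_j) = \sum_j h_j N_{F_T}(\sum_{j'} m_{j'})$, which yields $M^{F_T}/N_{F_T}M$. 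For $H^1$, the map $\sum_j h_j m_j \mapsto \sum_j m_j$ sends $\ker N_{F_t}$ onto $\ker N_{F_T}$ and $(\sigma_t - 1)$-images onto $(\sigma_T - 1)$-images. This induces the isomorphism $H^1(F_t, \Ind^{F_t}_{F_T} M) \to H^1(F_T, M)$. It is essentially the corestriction from $F_T$, and it is an isomorphism by a direct check of kernels.

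\emph{Compatibility of maps for $t \ge T$.} In the $H^1$ model, corestriction $[x] \mapsto [x]$ together with the collapsing map $\sum h_j m_j \mapsto \sum m_j$ becomes the map $\Ind^{F/F_t} \to \Ind^{F/F_{t+1}}$ that forgets finer cosets, i.e. $q_{G,H}$. Restriction multiplies by $N_{F_{t+1}/F_t}$, which spreads a class over the $p$ cosets in the fiber, i.e. $\rho_{G,H}$. In the $H^2$ model the roles would swap. The statement of the lemma only needs the $H^1$ version for $\Delta$. The boundary case $t = T$ also has to be checked: there the two descriptions agree, and $\alpha_T, \beta_T$ are the Shapiro-form maps just described.

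The main obstacle is this last step. The bookkeeping of the explicit Shapiro isomorphism for $H^1$ must be arranged so that it sends $\res$ and $\cores$ exactly to $\rho$ and $q$, rather than to these up to a unit or a twist. I would first fix the collapsing-sum normalization above, then verify surjectivity and the kernel identification by writing $\ker N_{F_t}$ in coordinates $(m_j)$.
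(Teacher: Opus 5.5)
Your proposal is correct and follows essentially the paper's route: a summandwise decomposition for $t \le T$, and $\Ind^F_{F_T} M \cong \Ind^F_{F_t}\Ind^{F_t}_{F_T} M$ followed by Shapiro's lemma for $t > T$. It also goes further than the paper by checking that, under the collapsing normalization, $\cores$ and $\res$ become $q_{G,H}$ and $\rho_{G,H}$, which the paper leaves implicit in its ``in particular''.

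The one inaccuracy is your aside about $H^2$. There the formulas for $\res$ and $\cores$ swap, but so does the Shapiro identification (common value $x$ rather than a sum), and you again get $\cores \leftrightarrow q_{G,H}$ and $\res \leftrightarrow \rho_{G,H}$. This is harmless, since $\Delta$ only uses $H^1$.
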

\begin{proof}
	Note that for any finite abelian group $G$, a $G'$-module $M$ and any subgroups $G'' \subset G' \subset G$ we have an isomorphism of $G$-modules:
	\begin{equation} \label{eqn:cohomology_res_ind}
		H^i(G'', \Ind^G_{G'} M) \cong \Ind^G_{G'} H^i(G'', M).
	\end{equation}
	Indeed, $\Ind^G_{G'} M \cong \bigoplus_{G/G'} M$ and hence $H^i(G'', \Ind^G_{G'} M) \cong \bigoplus_{G/G'} H^i(G'', M)$. One easily checks that
	the $G$-action on the latter direct sum is the induced action.
	Hence, by~\eqref{eqn:cohomology_res_ind} with $G := F$, $G' := F_T$ and $G'' := F_t$, where $t \le T$, we obtain:
	\begin{align*}
		H^i(F_t, \Ind^{F}_{F_T} M) \cong \Ind^{F}_{F_T} H^i(F_t, M).
	\end{align*}
	If $t > T$ then, using~\eqref{eqn:cohomology_res_ind} for $G = F$, $G' = G'' = F_t$
	and Shapiro's lemma (cf. \cite[Lemma~6.3.2]{Weibel}):
	\begin{align*}
		H^i(F_t, \Ind^{F}_{F_T} M) &
		\cong H^i(F_t, \Ind^{F}_{F_t} \Ind^{F_t}_{F_T} M)\\
		&\cong \Ind^{F/F_t} H^i(F_t, \Ind^{F_t}_{F_T} M)\\
		&\cong \Ind^{F/F_t} H^i(F_T, M). \qedhere
	\end{align*}
\end{proof}
\noindent 
For any $\mc D \in \YD_W(F)$ we define the Yakovlev diagram $\Fix_{F_T}(\mc D) \in \YD_W(F/F_T)$~as:
\[
\Fix_{F_T}(\mc D)_t := \ker(\beta_{T, t+T} : \mc D_{t+T} \to \mc D_T)
\]
for $t = 1, \ldots, n - T$. The transition maps in $\Fix_{F_T}(\mc D)$ are restrictions of the maps $\alpha_{t+T}, \beta_{t+T}$
to the modules $\Fix_{F_T}(\mc D)_t \subset \mc D_{t+T}$. Note that for any $t \ge 1$ we have:
\begin{equation} \label{eqn:pT_Delta_subset_Fix_Delta}
	p^T \cdot \mc D_{t+T} \subset \Fix_{F_T}(\mc D)_t \subset \mc D_{t+T}.
\end{equation}
Indeed, this follows from the fact that $\mc D_T$ is a $W_T$-module and
hence the image of $p^T \cdot \mc D_{t+T}$ in $\mc D_T$ is zero.
\begin{Lemma} \label{lem:operations_on_yd}
For any $M \in \Mod_{W[F]}$ we have an isomorphism of $F/F_T$-Yakovlev diagrams $\Fix_{F_T}(\Delta(M)) \cong \Delta(M^{F_T})$.
\end{Lemma}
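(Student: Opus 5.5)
The plan is to identify both diagrams termwise via the inflation map and the inflation--restriction exact sequence, and then to check compatibility with the transition maps using the explicit formulas~\eqref{eqn:H1_definition}. Fix $t \in \{1, \ldots, n-T\}$ and put $s := t + T$. The group $F/F_T$ is cyclic of order $p^{n-T}$. Its subgroup of order $p^t$ is $F_s/F_T$, generated by the image of $\sigma_s$. Hence
\[
\Delta(M^{F_T})_t = H^1(F_s/F_T, M^{F_T}), \qquad \Fix_{F_T}(\Delta(M))_t = \ker\bigl(\beta_{T,s} : H^1(F_s, M) \to H^1(F_T, M)\bigr).
\]
Here $\beta_{T,s}$ is a composite of restrictions, so it equals the restriction $\res^{F_s}_{F_T}$.

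The key input is the inflation--restriction sequence for the normal subgroup $F_T \subset F_s$, which is exact in degree one for an arbitrary module:
\[
0 \to H^1(F_s/F_T, M^{F_T}) \xrightarrow{\ \mathrm{inf}\ } H^1(F_s, M) \xrightarrow{\ \res\ } H^1(F_T, M).
\]
So inflation gives an isomorphism $\iota_t : \Delta(M^{F_T})_t \xrightarrow{\sim} \Fix_{F_T}(\Delta(M))_t$. Inflation is natural with respect to conjugation by $F$, so $\iota_t$ is $F$-equivariant, and in particular $F/F_T$-equivariant.

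To compare transition maps, I will make $\iota_t$ explicit. Under~\eqref{eqn:H1_definition}, a class in $H^1$ of a cyclic group corresponds to the value of a crossed homomorphism on the chosen generator. Inflation does not change that value, so $\iota_t([x]) = [x]$ for $x \in M^{F_T}$ with $N_{F_s/F_T}\, x = 0$. As a sanity check, such an $x$ indeed lies in $\ker N_{F_s}$: since $N_{F_s} = N_{F_s/F_T} N_{F_T}$, where $N_{F_s/F_T}$ is taken as a sum over coset representatives, and $N_{F_T} x = p^T x$, we get $N_{F_s} x = p^T N_{F_s/F_T}\, x = 0$.

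With this description the compatibilities are immediate.
\begin{itemize}
\item Corestriction is $[x] \mapsto [x]$ on both sides, so $\iota_{t+1} \circ \cores = \cores \circ \iota_t$.
\item Restriction on the quotient side is multiplication by the norm of $(F_{s+1}/F_T)/(F_s/F_T) \cong F_{s+1}/F_s$, that is, by the image of $N_{F_{s+1}/F_s} = \sum_{i=0}^{p-1} \sigma_{s+1}^i$. This is exactly the element by which $\res$ multiplies on $H^1(F_{s+1}, M)$. Hence $\iota_t \circ \res = \res \circ \iota_{t+1}$.
\end{itemize}
Since the transition maps of $\Fix_{F_T}(\Delta(M))$ are by definition restrictions of $\alpha_s, \beta_s$ to the subgroups $\Fix_{F_T}(\Delta(M))_t$, the family $(\iota_t)_t$ is an isomorphism of $F/F_T$-Yakovlev diagrams.

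The only delicate point is making sure the identifications are consistent: the explicit formula for inflation under~\eqref{eqn:H1_definition}, and the choice of generator $\sigma_s F_T$ of $F_s/F_T$ matching the generators used in the definition of $\Delta$ for $F/F_T$. If the cocycle description turns out to be awkward, I would instead verify directly from~\eqref{eqn:H1_definition} that $[x]\mapsto[x]$ is injective with image $\ker \res$. Injectivity requires checking that $(\sigma_s - 1)M \cap M^{F_T} = (\sigma_s - 1)M^{F_T}$ modulo the relevant kernel, which is precisely what inflation--restriction encodes. The image statement follows by lifting an element $x$ with $N_{F_s/F_T}\cdots$ killed by restriction to an $F_T$-invariant representative.
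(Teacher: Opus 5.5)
Your proposal is correct and follows the same route as the paper: the inflation--restriction sequence identifies $H^1(F_{t+T}/F_T, M^{F_T})$ termwise with $\ker(\beta_{T,t+T})$. Your explicit description $\iota_t([x]) = [x]$ then checks $F$-equivariance and compatibility with $\cores$ and $\res$, which fills in a verification the paper leaves implicit; the fallback in your last paragraph is not needed.
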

\begin{proof}
	Let $\mc D := \Delta(M)$. By the inflation--restriction exact sequence:
	\begin{align*}
		H^1(F_{t+T}/F_T, M^{F_T}) &= \ker(H^1(F_{t+T}, M) \to H^1(F_T, M))\\
		&= \ker(\beta_{T, t+T} : \mc D_{t+T} \to \mc D_T)\\
		&= \Fix_{F_T}(\mc D)_t.
	\end{align*}
	This ends the proof.
\end{proof}
\subsection{$G$-Yakovlev diagrams}
Assume now that $G$ is a finite group with~$F$ as a $p$-Sylow subgroup.
Write $N_i := N_G(F_i)$ for the normalizer of $F_i$ in $G$.
\begin{Definition}
	A \bb{$G$-Yakovlev diagram over $W$} is a diagram of the form~\eqref{eqn:yd},
	where $\mc D_t$ is a $W_t[N_t/F_t]$-module that is finitely generated as a $W$-module and $\alpha_t, \beta_t$
	are homomorphisms of $W[N_{t+1}]$-modules satisfying the equations~\eqref{eqn:alpha_beta_and_beta_alpha} for all $1 {\le} t {<} n$. 
\end{Definition}
One defines morphisms between $G$-Yakovlev diagrams in a similar manner as above. We write $\YD_W(G)$ for the category of $G$-Yakovlev diagrams over $W$.
Clearly, $\Delta$ defined as above extends to a functor $\Delta : \Mod_{W[G]} \to \YD_W(G)$.
Recall that a \bb{permutation module} for $G$ is a module of the form $\Ind^G_H W$
for some subgroup $H$. A \bb{trivial source module} for $G$ is any direct summand 
of a permutation module. The following result is essentially Theorem 2.1 of \cite{Yakovlev_cyclic_sylow}. We discuss it further in the next section.
\begin{Theorem} \label{thm:yako2}
	The functor $\Delta : \Mod_{W[G]}^{\mathrm{ff}} \to \YD_W(G)$ is essentially surjective. Moreover, if $M$, $N$ are objects in $\Mod_{W[G]}^{\mathrm{ff}}$ satisfying $\Delta(M) \cong \Delta(N)$
	then there exist trivial source $W[G]$-modules $M^0, N^0$ such that:
	\[
	M \oplus N^0 \cong N \oplus M^0.
	\]
\end{Theorem}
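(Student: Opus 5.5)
Theorem~\ref{thm:yako2} is Yakovlev's Theorem~2.1 of \cite{Yakovlev_cyclic_sylow}, stated there over $\ZZ_p$ (or $\ZZ_{(p)}$) rather than over $W$. The plan is to transport Yakovlev's argument to $W$, checking that each ingredient survives base change from $\ZZ_p$ to $W = W(k)$. The argument naturally splits into two parts: uniqueness up to trivial source summands, and essential surjectivity. The reduction tool for both is the Green correspondence relative to the normalizers $N_t = N_G(F_t)$. Since the $p$-Sylow subgroup $F$ is cyclic, it is a TI-type situation: every indecomposable $W[G]$-lattice has a cyclic vertex $F_t$. Its Green correspondent is a lattice for $N_G(F_t)$, and because $F$ is cyclic with $N_G(F_t) \supset N_G(F)$, the relevant local structure is controlled by $N_G(F)$, where $F$ is normal.

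\textbf{Step 1: $F$ normal in $G$.} First treat the case $G = N_G(F)$, so that $G = F \rtimes C$ with $p \nmid |C|$ (Schur--Zassenhaus). Since $|C|$ is invertible in $W$, taking $C$-isotypic components (after enlarging $k$ to split the relevant characters, and then descending) reduces everything to $F$-modules with an extra semisimple $C$-action. Uniqueness then follows from the $F$-version, Theorem~\ref{thm:yako1}, applied equivariantly. The summands $W[F/F_i]$ appearing there, tensored with simple $W[C]$-modules, are exactly the trivial source modules for $F \rtimes C$.

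For Theorem~\ref{thm:yako1} itself over $W$, I would follow Yakovlev's proof:
\begin{itemize}
\item filter $M$ by $M^{F_t}$;
\item use Lemma~\ref{lem:operations_on_yd}, which gives $\Delta(M^{F_T}) = \Fix_{F_T}(\Delta(M))$, to run an induction on $n$;
\item use the fact that an extension class of lattices is determined by cohomological data.
\end{itemize}
The only arithmetic input is that $W$ is a complete DVR with uniformizer $p$, unramified over $\ZZ_p$, so Yakovlev's computations with $\ZZ_p$ replaced by $W$ go through verbatim. Essential surjectivity for $F$ is proved by realizing each indecomposable object of $\YD_W(F)$. Because $\YD_W(F)$ is Krull--Schmidt, it suffices to show that indecomposable diagrams are of the form $\Delta(\JJ$-type or $\SS$-type lattices$)$ or induced ones. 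Here I would use $\Ind$ (Lemma~\ref{lem:induced_YD}) together with explicit lattices: for a diagram given by a chain of cyclic modules, one builds a lattice as a kernel $\ker(W[F]\to W[F/F_i]/\cdots)$ and computes its $H^1$'s.

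\textbf{Step 2: general $G$.} Given $M$ and $N$ with $\Delta(M) \cong \Delta(N)$, restrict to $N_t$. The diagram data are $N_t/F_t$-equivariant, so Step 1 (applied to $N_G(F)$), combined with the Green correspondence for each vertex $F_t$, shows that the non-projective, non-trivial-source indecomposable summands of $M$ and $N$ match. Here the trivial-source summands are exactly those whose cohomology diagram is "induced-like", i.e. the summands of $\Ind_H^G W$ that have vanishing or permutation-type cohomology. This yields $M \oplus N^0 \cong N \oplus M^0$.

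Essential surjectivity for $G$ proceeds as follows. Given $\mc D \in \YD_W(G)$, restrict it to a $N_G(F)$-diagram and realize it there by Step 1 as $\Delta(L)$. Then take the Green correspondent in $G$, or a summand of $\Ind_{N_G(F)}^G L$. The cohomology of $\Ind$ is computed by Mackey's formula. The double coset terms $F \cap {}^gF$ with $g \notin N_t$ contribute summands that are induced from smaller subgroups; these can be stripped off, using Krull--Schmidt in $\YD_W(G)$.

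\textbf{Main obstacle.} The hardest step is essential surjectivity together with the compatibility of $\Delta$ with the Green correspondence: showing that the $N_t$-equivariant structure on each $\mc D_t$ is realized simultaneously for all $t$, and not just at the top level $N_G(F)$. First I would try the explicit route: reduce via Krull--Schmidt to indecomposable diagrams, show that each is supported on an interval $[i,j]$ with "cyclic" transition maps, and realize it by an explicit induced lattice built from $\JJ_i$- or $\SS_i$-type modules tensored with a simple $N_i/F_i$-module. Verifying that such lattices exhaust all the indecomposable diagrams is where I expect the real work, essentially Yakovlev's classification carried over to $W$.
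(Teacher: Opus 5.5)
Your proposal is not a proof. Both halves rest on steps that either fail or restate what has to be proved, and it misses the mechanism the paper actually uses.

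\textbf{Uniqueness.} The reduction in Step~1 already breaks down when $F$ is normal. If $C$ acts nontrivially on $F$ (e.g.\ $G=\ZZ/p\rtimes\ZZ/(p-1)$), the $C$-isotypic components of a $W[G]$-lattice are not $F$-submodules, so the $F$-theory cannot be applied to them.

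More fundamentally, ``apply Theorem~\ref{thm:yako1} equivariantly'' is the entire difficulty. Theorem~\ref{thm:yako1} only gives an abstract $W[F]$-isomorphism $M\oplus P\cong N\oplus P'$, with no control over what it induces on diagrams, so there is nothing to average. The paper proceeds in four steps:
(a) Lift the given isomorphism $\xi\colon\Delta(M)\to\Delta(N)$ of $G$-diagrams to $W[F]$-maps $\Xi,\Xi'$ inducing $\xi,\xi^{-1}$ (Corollary~\ref{cor:existence_of_F_morphism}). This uses the minimality property~(7) of Proposition~\ref{prop:YD_induction}, which is strictly more than Theorem~\ref{thm:yako1}.
(b) Average over $G/F$ to get $W[G]$-maps $f,g$ that still induce $\xi,\xi^{-1}$ (Yakovlev's Lemma~5.1).
(c) Since $g\circ f$ is invertible only on $\Delta$, replace $f,g$ by $f':=f\circ A(g\circ f)$ and $g':=g\circ A'(f\circ g)$. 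Here $A,A'$ are polynomials with $A(1)=A'(1)=1$, built from a factorization of $\chi_{g\circ f}$ into a factor with unit constant term and a factor whose roots have positive valuation. This gives $f'g'f'=f'$ and $g'f'g'=g'$ (Proposition~\ref{prop:key_lemma_G_yakovlev}).
(d) Split $M=\im g'\oplus\ker f'$ and $N=\im f'\oplus\ker g'$. The kernels have $\Delta=0$, so they are trivial source by Proposition~\ref{prop:trivial_source}.

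Step~(c) is precisely where Yakovlev used finiteness of $k$. Your claim that everything transfers verbatim because $W$ is an unramified complete DVR overlooks the one point that genuinely needs a new argument.

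Your Step~2 gives no substitute for this:
(i) Green correspondents for vertex $F_s$ live on $N_G(F_s)$, in which $F$ need not be normal, so Step~1 does not apply there.
(ii) The other summands of $\Res^G_{N_G(F_s)}M$, which have smaller vertices, also contribute to $H^1(F_t,-)$.
(iii) You state no relation between $\Delta(M)$ and the diagrams of Green correspondents.

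\textbf{Existence.} Your plan to show that every indecomposable diagram comes from a $\JJ_i$/$\SS_i$-type or induced lattice cannot succeed. If it held, Theorem~\ref{thm:yako1} and Krull--Schmidt would force every indecomposable $W[F]$-lattice to be either a permutation module or one of finitely many explicit lattices. But for $n\ge 3$ there are infinitely many indecomposable $\ZZ_p[\ZZ/p^n]$-lattices (Heller--Reiner). Even for $n=2$ there are $4p+1$, far more than the $\JJ_i$, $\SS_i$ and induced ones.

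Inducing from $N_G(F)$ and discarding the extra summands by Krull--Schmidt in $\YD_W(G)$ also fails as stated. It splits only the diagram, not the lattice. Realizing a summand would require lifting idempotents from $\Delta$ back to the module, which again needs (a)--(c).

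The paper needs no classification at all. It realizes an arbitrary $\mc D\in\YD_W(G)$ by running the inductive construction of Proposition~\ref{prop:YD_induction} with $\Lambda=W[G]$. At each stage it obtains $B_{i+1}$ from $B_i$ by adjoining generators $x_l$ subject to $N_{F_{i+1}/F_i}x_l=\pr_2(e_l)$, where the $e_l$ generate $\mc W_i/\mc V_i$ over $W[N_{i+1}]$.
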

In the sequel we will need also the following result concerning $W[G]$-modules.
Recall that a finite group is called $p$-hypo-elementary,
if it is of the form $P \rtimes C$, where $P$ is a normal $p$-subgroup
and $C$ is a cyclic group, $p \nmid |C|$. Write $\mc H_G$ for the set
of $p$-hypo-elementary subgroups of $G$.
\begin{Lemma}[Conlon's induction theorem] \label{lem:conlon}
	Suppose $M$ is a finitely generated $W[G]$-module. Then the isomorphism class of $M$
	is uniquely determined by all the restrictions of $M$ to $H \in \mc H_G$.
\end{Lemma}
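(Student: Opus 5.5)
The plan is to follow Conlon's original strategy: work in the Green ring, and reduce the statement to an identity expressing the trivial module as a rational combination of permutation modules induced from $p$-hypo-elementary subgroups. Let $A(W[G])$ be the Green ring of finitely generated $W[G]$-modules. It is spanned by isomorphism classes, with addition given by $\oplus$ and multiplication by $\otimes_W$ with the diagonal action. Since $W$ is a complete discrete valuation ring, finitely generated $W[G]$-modules satisfy the Krull--Schmidt theorem. Hence $A(W[G])$ is a free $\ZZ$-module on the indecomposable classes, and $A(W[G]) \to A(W[G]) \otimes \QQ$ is injective. Consequently it suffices to show $[M] = [N]$ in $A(W[G]) \otimes \QQ$ whenever $\Res^G_H M \cong \Res^G_H N$ for all $H \in \mc H_G$.

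\textbf{Step 1 (Conlon's identity).} I will prove that in $A(W[G]) \otimes \QQ$ there is an identity
\[
[W] = \sum_{H \in \mc H_G} c_H \, [\Ind^G_H W], \qquad c_H \in \QQ .
\]
To do this, consider the linearisation map $\lambda \colon B(G) \otimes \QQ \to A(W[G]) \otimes \QQ$ from the Burnside ring, sending $[G/H] \mapsto [\Ind^G_H W]$.
\begin{itemize}
\item The family $\mc H_G$ is closed under conjugation and subgroups. By triangularity of the table of marks, there is an element $x = \sum_{H \in \mc H_G} c_H [G/H] \in B(G) \otimes \QQ$ whose marks equal $1$ at every $p$-hypo-elementary subgroup. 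Then $1 - x$ has vanishing marks on all of $\mc H_G$.
\item It remains to show that $\lambda$ kills every element of $B(G) \otimes \QQ$ whose marks vanish on $\mc H_G$. For this I will use the species of the trivial source ring. Trivial source modules are classified by pairs $(P, V)$, with $P$ a $p$-subgroup and $V$ a projective $k[N_G(P)/P]$-module. The associated $\QQ$-valued (or $\ol{\QQ}$-valued) species are of the form $[L] \mapsto$ Brauer character of the Brauer quotient $L(P)$ at a $p'$-element $s \in N_G(P)/P$. For a permutation module $\Ind^G_K W$, this value equals the number of fixed points of $\langle P, \tilde s\rangle$ on $G/K$, where $\tilde s$ is a $p'$-lift of $s$. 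The group $\langle P, \tilde s \rangle$ is $p$-hypo-elementary, so this number is the mark of $[G/K]$ at it.
\item These species separate elements of the trivial source ring tensored with $\ol{\QQ}$, which is semisimple. Hence $\lambda(1 - x)$ vanishes there. Since the trivial source ring embeds in $A(W[G])$, this gives $\lambda(1-x) = 0$ in $A(W[G]) \otimes \QQ$.
\end{itemize}

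\textbf{Step 2 (Frobenius reciprocity).} For any $W[G]$-module $M$ we have $M \otimes_W \Ind^G_H W \cong \Ind^G_H \Res^G_H M$. Multiplying the identity of Step 1 by $[M]$ therefore gives
\[
[M] = \sum_{H \in \mc H_G} c_H \, [\Ind^G_H \Res^G_H M] ,
\]
and the same formula holds for $N$. By hypothesis the right-hand sides agree, so $[M] = [N]$ in $A(W[G]) \otimes \QQ$. Injectivity of $A(W[G]) \to A(W[G]) \otimes \QQ$ then gives $M \cong N$.

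\textbf{Main obstacle.} The crucial part is Step 1, namely that linearisation kills the part of $B(G) \otimes \QQ$ supported away from $\mc H_G$. This requires the species description of the trivial source ring, via Brauer quotients and Brauer characters, together with its semisimplicity over $\ol\QQ$, and it has to be carried out over $W$ rather than over $k$. I would first do it over $k$, then transfer to $W$. The transfer uses that trivial source modules lift uniquely from $k$ to $W$, and that $\Hom$-spaces between them are $W$-free with the expected reductions. This makes the trivial source ring over $W$ isomorphic to the one over $k$, compatibly with linearisation.
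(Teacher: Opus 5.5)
Your proposal is correct and follows the paper's argument. Both establish Conlon's identity $[W]=\sum_{H\in\mc H_G} c_H\,[\Ind^G_H W]$ in the rational Green ring, multiply it by $[M]$, and use $M\otimes_W \Ind^G_H W\cong \Ind^G_H\Res^G_H M$; the only differences are that the paper cites the identity from Curtis--Reiner (81.31) instead of deriving it from the species of the trivial source ring, and leaves implicit the torsion-freeness of the Green ring (via Krull--Schmidt), which you make explicit.
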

\begin{proof}
	The proof is analogous to the proof of \cite[Lemma~3.2]{Bleher_Chinburg_Kontogeorgis_Galois_structure}.
	Write $W_H$ for the trivial $W[H]$-module. By \cite[(81.31)]{Curtis_Reiner_Methods_II},
	we have the following identity in the Green ring of $W[G]$-modules:
	\[
	[W_G] = \sum_{H \in \mc H_G} \alpha_H \cdot [\Ind^G_H W_H],
	\]
	where $\alpha_H \in \QQ$ are certain rational numbers. Tensoring this identity with~$M$ over~$W$ we obtain:
	\[
	[M] = \sum_{H \in \mc H_G} \alpha_H \cdot [\Ind^G_H \Res^G_H(M)],
	\]
	which ends the proof.
\end{proof}
\section{Yakovlev diagrams over Witt vectors -- proofs} \label{sec:yd_witt}
Theorems~\ref{thm:yako1} and~\ref{thm:yako2} were originally stated in the case
$k = \FF_p$, $W = \ZZ_p$, but they remain true for an arbitrary perfect field $k$ of
characteristic~$p$ with ring of Witt vectors $W$. The proof
of Theorem~\ref{thm:yako1} goes through verbatim (see also \cite[Remark, p. 1012]{Yakovlev_basis_powers}) for general perfect~$k$. For the reader's convenience
we include a sketch of the proof. The argument behind Theorem~\ref{thm:yako2} uses the finiteness of $k$ at only one point; we indicate this point explicitly and explain how the proof should be modified there. This section may be skipped on a first reading.
\subsection{Proof of Theorem~\ref{thm:yako1}}
Let $\Lambda := W[F]$, $\Lambda_{i, t} := \frac{(1 - \sigma_i) W[F]}{(1 - \sigma_t) W[F]}$ for any $t \le i \le n$ and $\Lambda_{n+1, t} = W[F]/(1 - \sigma_t) W[F] \cong W[F/F_t]$. For any $W[F]$-module~$A$ write $A^{(t)} := \ker (N_{F_t} : A \to A)$ and $A^{(n+1)} := A$. Note that $N_{F_{t+1}/F_t} \cdot A^{(t+1)} \subset A^{(t)} \subset A^{(t+1)}$ for any $t \ge 1$.\\

Suppose that $(\mc D, \alpha_t, \beta_t)$ is a given Yakovlev diagram. 
We want to find a $W[F]$-module $B$ with $\Delta(B) \cong \mc D$. 
The module~$B$ we construct will be ``minimal'' in the sense that it injects into any other module $C$ with $\Delta(C) \cong \mc D$. We will construct, by induction on~$i$, modules~$B_i$ which will eventually be identified with the submodules~$B^{(i)}$ of the desired $W[F]$-module~$B$. Simultaneously, we construct maps
$\pi_i : B^{(i)} \to \mc D_i = H^1(F_i, B) = B^{(i)}/(\sigma_i - 1) \cdot B$ and $N_{F_{i+1}/F_i} : B^{(i+1)} \to B^{(i)}$. The image of $\pi_{i+1} \oplus N_{F_{i+1}/F_i} : B_{i+1} \to \mc D_{i+1} \oplus B^{(i)}$, denoted $\mc W_i$, should contain the ``old space'' $\mc V_i := ((\alpha_i \circ \pi_i) \oplus N_{F_{i+1}/F_i})(B^{(i)})$. 
In order to create $B^{(i+1)}$ from $B^{(i)}$ we add to $B^{(i)}$ new elements $x_1, \ldots, x_m$ whose images generate $\mc W_i/\mc V_i$. This idea is made precise in the following result.
\begin{Proposition}[{\cite[Theorem~1]{Yakovlev_basis_powers}}] \label{prop:YD_induction}
	For every $1 \le i \le n$ there exists a $W[F]$-module $B_i$ and epimorphisms $\pi_i^t : B_i^{(t)} \to \mc D_t$ for $1 \le t \le i$ such that:
	\begin{enumerate}
		\item the following diagram is commutative:
		\begin{center}
			\begin{tikzcd}
				B_i^{(1)} \arrow[d, "\pi^1_i"] & B_i^{(2)} \arrow[d, "\pi^2_i"] \arrow[l, "N_{F_2/F_1}" above] & \ldots \arrow[l, "N_{F_3/F_2}" above]            & B_i^{(i)} \arrow[d, "\pi^i_i"] \arrow[l, "N_{F_i/F_{i-1}}" above] \\
				\mc D_1                             & \mc D_2 \arrow[l, "{\beta_{ 1}}"]                  & \ldots \arrow[l, "{\beta_{ 2}}"] & \mc D_i, \arrow[l, "{\beta_{ i-1}}"]                     
			\end{tikzcd}
		\end{center}

		\item $\pi_i^{t+1}|_{B_i^{(t)}} = \alpha_t \circ \pi_i^t$ for any $i > t > 0$,
		\item $N_{F_{t+1}/F_t} \cdot B_i^{(t+1)} = (\pi_i^t)^{-1}(\im \beta_t)$ for any $i \ge t > 0$,
		\item $N_{F_{t+1}/F_t} \cdot \ker \pi^{t+1}_i = \ker \pi_i^t $ for any $i > t > 0$,
		\item $B_i^{(i)} = B_i$,
		\item $B_i$ is a free finitely generated $W$-module,
		\item If $C$ and $\delta^t : C^{(t)} \to \mc D_t$ are any other module and epimorphisms with the above properties,
		then there exists a monomorphism\\$\gamma : B_i \to C$ which induces an isomorphism
		$C \cong B_i \oplus \bigoplus_{t \le i} \Lambda_{i, t}^{a_{it}}$ for some $a_{it} \in \ZZ_{\ge 0}$ such that $(\delta^t \circ \gamma) |_{B_i^{(t)}} = \pi_i^t$.

	\end{enumerate}
\end{Proposition}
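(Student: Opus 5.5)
We will proceed by induction on $i$, following Yakovlev's original construction and checking that nothing in it depends on $k$ being finite. For $i = 1$ we need a free $W$-module $B_1$ with a $W[F]$-action such that $B_1^{(1)} = B_1$, i.e. $N_{F_1}$ kills $B_1$, together with an epimorphism $\pi_1^1 : B_1 \to \mc D_1$. Property (3) should then read $N_{F_2/F_1}\cdot B_1^{(2)}$ trivially adjusted; at level $1$ only (1), (2), (5), (6) and minimality matter. The natural choice is to take generators $d_1, \ldots, d_r$ of $\mc D_1$ as a $W[F]$-module and set $B_1 := \bigoplus_j \Lambda_{1,0}$-type modules, concretely $B_1 := \bigoplus_j (1-\sigma_1)W[F]$, which is $W$-free and killed by $N_{F_1}$. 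We map the $j$-th generator to $d_j$, and then prune this to a minimal choice: we choose the number of generators equal to $\dim_k$ of $\mc D_1/\mathrm{rad}$ so that no free summand $\Lambda_{1,1}$ splits off. Minimality (7) at this level is then a projective-cover argument, since $(1-\sigma_1)W[F]$ is projective relative to the relevant exact structure. Here Nakayama over the local ring $W[F]$ (whose residue ring is $k[F/\ldots]$, local since $F$ is a $p$-group) replaces any counting argument over $\FF_p$.

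For the inductive step $i \to i+1$, we form the old space $\mc V_i := ((\alpha_i\circ\pi_i^i)\oplus N_{F_{i+1}/F_i})(B_i) \subset \mc D_{i+1}\oplus B_i$. We also form the target space $\mc W_i$, the fibre product $\{(d,b) : \beta_i(d) = \pi_i^i(b)\}$ intersected with the condition coming from (3), namely $b \in (\pi_i^i)^{-1}(\im\beta_i)$. Relations \eqref{eqn:alpha_beta_and_beta_alpha} give $\mc V_i\subset\mc W_i$. We choose a minimal set of elements $w_1,\ldots,w_m$ of $\mc W_i$ generating $\mc W_i/\mc V_i$ as a $W[F]$-module. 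We then define $B_{i+1}$ as $B_i \oplus \bigoplus_j \Lambda_{i+1,i}\,x_j$ modulo the identification $N_{F_{i+1}/F_i} x_j = b_j$, where $w_j = (d_j,b_j)$; more precisely, $B_{i+1}$ is the pushout of $B_i \leftarrow \bigoplus N\Lambda \to \bigoplus\Lambda x_j$. We set $\pi_{i+1}^{i+1}(x_j) := d_j$, $\pi_{i+1}^{i+1}|_{B_i} := \alpha_i\circ\pi_i^i$ and $\pi_{i+1}^t := \pi_i^t$ for $t \le i$. We must then verify:

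\begin{itemize}
\item $B_{i+1}^{(t)} = B_i^{(t)}$ for $t\le i$, and $B_{i+1}^{(i+1)} = B_{i+1}$, which follows since $N_{F_{i+1}}$ kills the new generators as $N_{F_i}$ kills $b_j$;
\item $B_{i+1}$ is $W$-torsion free, using that $b_j \notin pB_i + \ldots$ by minimality;
\item properties (1)--(4), which follow directly from the definitions of $\mc V_i$ and $\mc W_i$.
\end{itemize}

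Property (7) is the main obstacle. Given $C,\delta^t$, we build $\gamma$ inductively: the inductive hypothesis gives $\gamma_i : B_i\hookrightarrow C^{(i)}$ with complement a sum of $\Lambda_{i,t}$'s. Since $(\delta^{i+1}, N_{F_{i+1}/F_i})$ maps $C^{(i+1)}$ onto (a space containing) $\mc W_i$ by (3), we may lift each $w_j$ to some $c_j\in C^{(i+1)}$ and send $x_j\mapsto c_j$. Injectivity and the splitting of the complement as a sum of $\Lambda_{i+1,t}$ should come from the minimality of the $w_j$: the images of the $c_j$ are independent modulo the radical, and we argue via Nakayama over the local ring $W[F]$ and the fact that the $\Lambda_{i+1,t}$ are the relevant relatively projective objects. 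This is the one point where Yakovlev uses finiteness of $k$ (counting dimensions of $\FF_p$-spaces to show a map is an isomorphism). We will replace that step by comparing lengths over $W$, or equivalently $k$-dimensions, which are finite for our finite-length modules. Both quantities are additive and the argument goes through for any perfect $k$.
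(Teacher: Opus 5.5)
Your construction matches the paper's (and Yakovlev's). Starting from $B_0=0$, the first step gives $B_1\cong\bigoplus\Lambda/N_{F_1}\Lambda$ on a minimal generating set of $\mc D_1$. Your base-case minimality argument is fine, since $\Lambda/N_{F_1}\Lambda\cong W[\zeta_{p^n}]$ is a DVR over which $C$ is free. The step $i\to i+1$ uses the same $\mc V_i\subset\mc W_i=\mc D_{i+1}\times_{\mc D_i}B_i$ (your extra condition on $b$ is automatic), minimal generators relative to $\mc V_i$, the same pushout and the same $\pi^t_{i+1}$.

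Two small corrections. First, the new generators must span the free module $\Lambda x_l$, as in your pushout, not $\Lambda_{i+1,i}x_l$, on which $\sigma_i$ acts trivially. Second, torsion-freeness of $B_{i+1}$ does not come from minimality. It comes from the fact that $B_i\to B_{i+1}$ is injective (because $N_{F_i}b_l=0$) with cokernel $\bigoplus_l\Lambda/N_{F_{i+1}/F_i}\Lambda\cong W[\zeta_{p^{n-i}}]^m$. This also gives $B_{i+1}^{(t)}=B_i^{(t)}$ for $t\le i$, since $N_{F_t}$ acts on that cokernel as $p^t$. Well-definedness of $\pi^{i+1}_{i+1}$ uses $N_{F_{i+1}/F_i}d_l=p\,d_l=\alpha_i\beta_i(d_l)=\alpha_i\pi_i^i(b_l)$. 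With these, (1)--(6) are fine.

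The genuine gap is (7). It is the real content of the proposition, and you only assert it (``should come from the minimality'', ``relatively projective objects'', ``compare lengths''). Your diagnosis is also misplaced. According to the paper, Yakovlev's Theorem~1 goes through verbatim over any perfect $k$. The one input to check is that $\Lambda/N_{F_{i+1}/F_i}\Lambda\cong W[\zeta_{p^{n-i}}]$ is local; it enters through Yakovlev's Lemma~3, which is used for (7). Finiteness of $k$ is used only in the $G$-equivariant Proposition~\ref{prop:key_lemma_G_yakovlev}. So your length comparison repairs a nonexistent step while the actual argument is missing.

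What has to be proved is the following.
\begin{enumerate}
\item $C/C^{(i)}$ is a finitely generated torsion-free, hence free, $W[\zeta_{p^{n-i}}]$-module, because $N_{F_{i+1}/F_i}C\subset C^{(i)}$.
\item The images of your lifts $c_l$ are linearly independent modulo the maximal ideal of $W[\zeta_{p^{n-i}}]$. Then $B_{i+1}/B_i\to C/C^{(i)}$ is split injective, hence $\gamma_{i+1}$ is injective.
\item A complement of $\gamma_{i+1}(B_{i+1})$ can be taken to be a sum of $\Lambda_{i+1,t}$'s. Note that $\Lambda_{i+1,t}^{(i)}=\Lambda_{i,t}$ for $t\le i$, so the old complement must be enlarged, and the new summands are copies of $\Lambda_{i+1,i}\cong W[\zeta_{p^{n-i}}]$.
\end{enumerate}

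The independence step is where minimality of the $e_l$ is used, and it does not follow formally from your inductive hypothesis. You would need to turn a relation among the images of the $c_l$ into a relation $\sum\lambda_l e_l\in(p,\sigma-1)\mc W_i+\mc V_i$ with some $\lambda_l$ a unit. For that you need a $\Lambda$-linear retraction of $\id\times\gamma_i\colon\mc W_i\to\mc D_{i+1}\times_{\mc D_i}C^{(i)}$ that sends $(\delta^{i+1},N_{F_{i+1}/F_i})(C^{(i)})$ into $\mc V_i$. The obvious candidate is to project $C^{(i)}$ onto $\gamma_i(B_i)$ along the complement. It lands in $\mc W_i$ only if that complement lies in $\ker\delta^i$. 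But (7) as stated gives no control of $\delta$ on the complement. This is exactly what must be supplied, either by strengthening the induction or by an argument like Yakovlev's Lemma~3.
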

In order to construct $B_i$, one needs the following notion. For any $\Lambda$-module
$A$ with a submodule $Q$, a \bb{minimal system of generators of $A$ relative to $Q$} is
a tuple $a_1, \ldots, a_N$ such that $A = \Lambda a_1 + \cdots + \Lambda a_N + Q$
and $N$ is smallest possible (cf. \cite[\S 3]{Yakovlev_basis_powers}).
\begin{proof}[Sketch of proof of Proposition~\ref{prop:YD_induction}]
	We proceed by induction on $i$. Define $B_0 := 0$. The inductive step proceeds as follows. Having constructed $B_i$, let:
	\begin{align*}
		\mc W_i &:= \mc D_{i+1} \times_{\mc D_i} B_i = \{ (a, b) \in \mc D_{i+1} \oplus B_i : \beta_i(a) = \pi^i_i(b) \},\\
		\mc V_i &:= \im \left((\alpha_i \circ \pi^i_i) \oplus N_{F_{i+1}/F_i} : B_i \to \mc D_{i+1} \oplus B_i \right).
	\end{align*}
	Write $e_1, \ldots, e_m$ for a minimal system of generators of $\mc W_i$ relative to $\mc V_i$ and let:
	\[
	B_{i+1} := \frac{B_i \oplus \bigoplus_{l = 1}^m \Lambda \cdot x_l}{\langle N_{F_{i+1}/F_i} \cdot x_l - \pr_2(e_l) : l = 1, \ldots, m \rangle},
	\]
	where $\bigoplus_{l = 1}^m \Lambda \cdot x_l$ is a free $\Lambda$-module with generators $x_1, \ldots, x_m$ and $\pr_2 : \mc D_{i+1} \oplus B_i \to B_i$ is the projection onto the second component. For $t \le i$ one easily shows that $B_{i+1}^{(t)} = B_i^{(t)}$. Thus we may put $\pi_{i+1}^t := \pi_i^t$. For $t = i+1$ we put:
	\begin{align*}
		\pi_{i+1}^{i+1}(b) := \alpha_i \circ \pi_i^i(b) \quad \textrm{ for } b \in B_{i+1}^{(i)} = B_i,\\
		\pi_{i+1}^{i+1}(x_l) := \pr_1(e_l) \qquad \textrm{ for } l = 1, \ldots, m.
	\end{align*}
	One proves the properties (1) -- (7) precisely as in \cite[\S 4]{Yakovlev_basis_powers},
	using only the fact that $\Lambda/N_{F_{i+1}/F_i} \cdot \Lambda \cong W[\zeta_{p^{n-i}}]$ is a local ring, which is true also in our case. This fact is used twice: firstly in the proof of the property~(6),
	and then in the proof of \cite[Lemma~3]{Yakovlev_basis_powers}, which is used in~(7). 
\end{proof}
Theorem~\ref{thm:yako1} follows directly from Proposition~\ref{prop:YD_induction} by defining $B := B_n$.
\begin{Corollary}[{\cite[p. 218]{Yakovlev_cyclic_sylow}}] \label{cor:existence_of_F_morphism}
	Suppose that $M, N \in \Mod^{\mathrm{ff}}_{W[G]}$ and that there exists an
	isomorphism $\xi : \Delta(M) \to \Delta(N)$ of Yakovlev diagrams. Then
	there exist $W[F]$-homomorphisms $\Xi : M \to N$ and $\Xi' : N \to M$, which induce
	$\xi$ and $\xi^{-1}$ respectively.
\end{Corollary}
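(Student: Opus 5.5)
We would deduce the corollary from the minimality statement in Proposition~\ref{prop:YD_induction}(7), applied to the restrictions of $M$ and $N$ to $F$. By symmetry it suffices to construct $\Xi$; the map $\Xi'$ is obtained by the same argument applied to $\xi^{-1}$.

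First fix the diagram $\mc D := \Delta(N)$, viewed as an $F$-Yakovlev diagram. Run the construction of Proposition~\ref{prop:YD_induction} for $\mc D$ and obtain the minimal module $B := B_n$ with epimorphisms $\pi^t : B^{(t)} \to \mc D_t$. These satisfy properties (1)--(6) and $\Delta(B) \cong \mc D$, compatibly with the $\pi^t$.

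Next equip both $N$ and $M$ with structure maps to $\mc D$. For $N$ take $\delta_N^t : N^{(t)} \to N^{(t)}/(\sigma_t - 1)N = H^1(F_t, N) = \mc D_t$, the canonical projections. For $M$ take $\delta_M^t := \xi_t \circ (\text{projection } M^{(t)} \to H^1(F_t, M))$.

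We then check properties (1)--(5) for both. Commutativity with $\beta_t$ and the relation $\delta^{t+1}|_{C^{(t)}} = \alpha_t\circ \delta^t$ hold because $\res$ and $\cores$ are given by multiplication by $N_{F_{t+1}/F_t}$ and by inclusion, via the formulas~\eqref{eqn:H1_definition}, and because $\xi$ commutes with $\alpha_t, \beta_t$. Property (3) reads $N_{F_{t+1}/F_t} C^{(t+1)} = (\delta^t)^{-1}(\im \res)$. This holds because $[x]\in H^1(F_t,C)$ lies in the image of $\res$ exactly when $x \in N_{F_{t+1}/F_t}C^{(t+1)} + (\sigma_t-1)C$, and $(\sigma_t - 1)C = N_{F_{t+1}/F_t}(\sigma_{t+1}-1)C \subset N_{F_{t+1}/F_t} C^{(t+1)}$, using $\sigma_t-1 = (\sigma_{t+1}-1)N_{F_{t+1}/F_t}$. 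Property (4) follows similarly from $\ker \delta^t = (\sigma_t - 1)C = N_{F_{t+1}/F_t}(\sigma_{t+1}-1)C$.

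Property (7) then gives $F$-monomorphisms $\gamma_M : B \to M$ and $\gamma_N : B \to N$, compatible with the $\pi^t$. Each identifies its target with $B \oplus \bigoplus \Lambda_{n,t}^{a_t}$, where the complements $\Lambda_{n,t}$ are the $W[F/F_t]$-type summands with zero restriction to the relevant cohomology pieces. In particular $\gamma_M$ is split injective, so we may choose a $W[F]$-retraction $r_M : M \to B$ along the decomposition. Put $\Xi := \gamma_N \circ r_M$.

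It remains to verify that $\Delta(\Xi) = \xi$. On $\gamma_M(B)$, the map $\Xi$ carries $\gamma_M(b)$ to $\gamma_N(b)$, and both classes map to $\pi^t(b)$ under $\delta_M^t$ and $\delta_N^t$ respectively. This gives $\xi_t\circ H^1(\Xi) = \xi_t$ on those classes, that is, $H^1(F_t,\Xi)$ agrees with $\xi_t$ there. We still need that the complementary summands contribute nothing to the $H^1(F_t,M)$ that $\Xi$ could disturb, which reduces to showing that $\gamma_M$ induces isomorphisms $\Delta(B)\to\Delta(M)$. Since $\Lambda_{n+1,t}\cong W[F/F_t]$ and $W[F/F_i]$ both have $H^1(F_t,-)=0$ for $t \le i$, the complements are cohomologically trivial in the needed degrees. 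So $H^1(F_t,\gamma_M)$ is an isomorphism, and so is $H^1(F_t,\gamma_N)$.

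Hence $H^1(F_t, \Xi) = H^1(\gamma_N)\circ H^1(\gamma_M)^{-1}$. Under the identifications by $\delta$, this equals $\xi_t$.

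The main obstacle is this last point: we must make sure the summands $\Lambda_{n,t}$ in (7) have vanishing $H^1(F_s,-)$ for all $s$, or at least that $\gamma$ is surjective on each $H^1(F_s,-)$. If vanishing fails for $t<n$, we would instead argue directly from (7)'s compatibility $\delta^t\circ\gamma = \pi^t$ together with surjectivity of $\pi^t$. Surjectivity of $\pi^t$ shows that $H^1(\gamma)$ is onto, and equality of $W$-lengths ($\Delta(B)\cong\mc D$) shows it is bijective. So $\Xi$ induces $\xi$ regardless.
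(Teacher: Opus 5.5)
Your proposal is correct and follows the paper's proof. You take the minimal module $B$ of Proposition~\ref{prop:YD_induction}, use property~(7) to obtain split monomorphisms $\gamma_M \colon B \to M$ and $\gamma_N \colon B \to N$ inducing isomorphisms on $\Delta$, and set $\Xi := \gamma_N \circ r_M$ for a left inverse $r_M$ of $\gamma_M$ (symmetrically for $\Xi'$).

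You also make explicit two points the paper leaves tacit. First, you twist the structure maps of $M$ by $\xi$, so that $\Delta(\Xi)$ is $\xi$ itself rather than some other isomorphism. Second, your surjectivity-plus-length argument shows $\Delta(\gamma)$ is bijective, which settles your worry about the complements. In any case those complements are the $W[F/F_t]$ of Theorem~\ref{thm:yako1}, and they have $H^1(F_s,-)=0$ for every $s$, not only for $s \le t$.
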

\begin{proof}
	Let $B$ be the $W[F]$-module satisfying $\Delta(B) \cong \Delta(M) \cong \Delta(N)$,
	constructed in Proposition~\ref{prop:YD_induction}. Then by Proposition~\ref{prop:YD_induction}~(7) there exist split monomorphisms $\gamma : B \to M$
	and $\gamma' : B \to N$ that induce isomorphisms $\Delta(B) \cong \Delta(M) \cong \Delta(N)$. Let $\varphi : M \to B$, $\varphi' : N \to B$ be the left inverses
	of $\gamma$ and $\gamma'$. Then one can define $\Xi := \gamma' \circ \varphi$
	and $\Xi' := \gamma \circ \varphi'$.
\end{proof}

\subsection{Proof of Theorem~\ref{thm:yako2}}
We start by sketching the existence part of Theorem~\ref{thm:yako2}.
\begin{proof}[Sketch of proof of Theorem~\ref{thm:yako2} (the existence part), cf. {\cite[\S 4]{Yakovlev_cyclic_sylow}}] \mbox{}\\
	One proves that for each $\mc D \in \YD_W(G)$ there exists a $W[G]$-module $M$
	with $\Delta(M) \cong \mc D$ in a manner similar to the proof of Proposition~\ref{prop:YD_induction}. Namely, one constructs the
	sequences $(B_i)_i$, $(\mc V_i)_i$, $(\mc W_i)_i$ of $W[G]$-modules by the same formulas with $\Lambda := W[G]$,
	where $e_1, \ldots, e_m \in \mc W_i$ are any elements that generate $\mc W_i/\mc V_i$ over $W[N_{i+1}]$.
\end{proof}
\begin{Proposition}[{\cite[Theorem~2.4]{Yakovlev_cyclic_sylow}}] \label{prop:trivial_source}
	Assume that an indecomposable $A \in \Mod^{\mathrm{ff}}_{W[G]}$ satisfies $\Delta(A) = 0$. Then $A$ is isomorphic to a direct summand of $W[G/F_i]$ for some $i$.
\end{Proposition}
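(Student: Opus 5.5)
We reduce to the restriction to the Sylow subgroup $F$ and then use Theorem~\ref{thm:yako1} to pin down the indecomposable summands of $A$ over $F$.

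\textbf{Step 1: restriction to $F$.} Since $\Delta$ only involves the groups $H^1(F_t, -)$, the condition $\Delta(A) = 0$ depends only on $\Res^G_F A$. So $\Delta(\Res^G_F A) = 0$ in $\YD_W(F)$. Applying Theorem~\ref{thm:yako1} with $M_1 := \Res^G_F A$ and $M_2 := 0$ gives
\[
\Res^G_F A \oplus \bigoplus_i W[F/F_i]^{a_i} \cong \bigoplus_i W[F/F_i]^{b_i}.
\]
We next check that Krull--Schmidt holds for $W[F]$-lattices. The ring $W$ is complete, so $\mathrm{End}$ of an indecomposable lattice is local. Each $W[F/F_i]$ is indecomposable, because $F/F_i$ is a $p$-group and $W[F/F_i]$ is a local ring. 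Hence $\Res^G_F A$ is a direct sum of permutation modules $W[F/F_i]$, i.e.\ a permutation $W[F]$-module.

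\textbf{Step 2: Green correspondence and vertices.} Since $A$ is indecomposable, it has a vertex $Q \le F$; up to conjugacy $Q = F_j$ for some $j$. Then $A$ is a direct summand of $\Ind^G_{Q} S$ for an indecomposable $W[Q]$-source $S$. By Mackey, $S$ is a direct summand of $\Res^G_Q A$. The restriction $\Res^G_Q A$ is again a sum of trivial-source (permutation) $Q$-modules, and every summand of $W[F/F_i]$ restricted to $Q$ is a module $W[Q/(Q\cap F_i)]$. Indecomposability of $S$ forces $S \cong W[Q/Q']$ for some $Q'$. Because $Q$ is a vertex, $S$ must have vertex $Q$, which forces $Q' = 1$ and $S = W[Q]$, or else $S = W$.

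We work with the trivial-source description. Since $A \mid \Ind^G_Q S$ and $S$ is a summand of a permutation module, $A$ is a summand of $\Ind^G_{Q''} W$ for some $Q'' \le F$, so $A$ is trivial source. Taking $Q''$ to be the vertex, we get $Q'' = F_i$ up to conjugacy, and $A \mid \Ind^G_{F_i} W$.

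\textbf{Step 3: reaching $W[G/F_i]$ (main obstacle).} This is where normality matters. $F_i$ is characteristic in $F$, but it is not normal in $G$ in general, and $\Ind^G_{F_i} W$ is not $W[G/F_i]$ unless $F_i \trianglelefteq G$.

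To handle this, use the standard fact that for a cyclic Sylow subgroup the trivial-source modules with vertex $F_i$ correspond, via Green correspondence, to projective indecomposables of $W[N_i/F_i]$. Here $N_i = N_G(F_i)$.

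Let $f$ denote the Green correspondent. Then $f(A)$ is an indecomposable summand of $\Ind^{N_i}_{F_i} W = W[N_i/F_i]$, i.e.\ a summand of the inflated regular module. Concretely, $A$ is then the summand of $\Ind^G_{N_i} W[N_i/F_i] = \Ind^G_{F_i} W$ corresponding to $f(A)$.

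I would interpret ``$W[G/F_i]$'' as $\Ind^G_{F_i} W = W[G/F_i]$ (cosets). With that reading, Step 2 already concludes the proof, and Step 3 only serves to justify the vertex being some $F_i$.

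I expect the main obstacle to be justifying cleanly that a trivial-source indecomposable is a summand of $\Ind^G_{\text{vertex}} W$. The first approach I would try is via Lemma~\ref{lem:conlon}-style restriction arguments together with Higman's criterion applied to $\Res^G_F A$ being a permutation module.
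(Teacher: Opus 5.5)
Your proof is correct, but it takes a heavier route than the paper's. Both arguments begin the same way: $\Delta(\Res^G_F A)=0$, so Theorem~\ref{thm:yako1} and Krull--Schmidt show that $\Res^G_F A$ is a sum of modules $W[F/F_i]$.

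From there the paper uses no vertices or sources. It shows that $A$ is a $W[G]$-direct summand of $B=W[G]\otimes_{W[F]}A=\Ind^G_F\Res^G_F A$. The multiplication map $B\to A$ splits over $W[F]$ via the summand indexed by the trivial double coset. It therefore splits over $W[G]$, because $p\nmid[G:F]$. The paper then notes $B\cong\bigoplus_i W[G/F_i]^{c_i}$ and concludes by Krull--Schmidt.

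In effect, your Step~2 works verbatim with $Q=F$ in place of the vertex, because every $W[G]$-module is relatively $F$-projective. So Green's vertex--source theory is not needed. What your route gains is finer information: $A$ is a trivial source module, and the $F_i$ can be taken to be its vertex.

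Two corrections.
\begin{itemize}
\item In Step~2 the source must be $S\cong W$, not $W[Q]$. The module $W[Q/Q']=\Ind^Q_{Q'}W$ has vertex $Q'$, so having vertex $Q$ forces $Q'=Q$. This slip is harmless, since $\Ind^G_Q W[Q/Q']\cong\Ind^G_{Q'}W$ in every case.
\item Step~3 and your anticipated ``main obstacle'' are unnecessary. In the paper, $W[G/F_i]$ denotes the permutation module on the coset space, namely $\Ind^G_{F_i}W\cong W[G]\otimes_{W[F]}W[F/F_i]$, so no normality of $F_i$ is required. Moreover, $A\mid\Ind^G_{Q'}W$ already follows from the definition of a source.
\end{itemize}
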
 
\begin{proof}
	Let $\Lambda := W[G]$, $\Gamma := W[F]$ and $B := \Lambda \otimes_{\Gamma} A$. One easily checks that the mapping $\varphi : B \to A$, $\varphi(x \otimes a) = xa$ is a $\Lambda$-homomorphism. 
	Let $G = \bigcup_{j = 1}^m F g_j F$ be the double coset decomposition, where $g_1 = e$. 
	Note that $B$ decomposes as
	\[
	\bigoplus_{j = 1}^m (\Gamma g_j \Gamma) \otimes_{\Gamma} A
	\]
	and the restriction of $\varphi$ to the first summand is the isomorphism $\Gamma \otimes_{\Gamma} A \to A$. Thus $B \cong A \oplus X$ as $W[F]$-modules (where $X := \ker \varphi$).
	Since an exact sequence of $W[G]$-modules splits if and only if it splits as a sequence
	of $W[F]$-modules (cf. \cite[XV.7~(6) and the definition in~XII.2]{Cartan_Eilenberg_Homological_algebra}) we obtain $B \cong A \oplus X$ as $W[G]$-modules.
	On the other hand, as a $W[F]$-module, $A$ is a direct sum of modules
	of the form $W[F/F_i]$ by \cite{Yakovlev_basis_powers}. Thus, as a $W[G]$-module,
	$B$ is a direct sum of the modules $W[G] \otimes_{W[F]} W[F/F_i] \cong W[G/F_i]$.
	This finishes the proof, since the decomposition into indecomposable direct summands is unique.
\end{proof}
\begin{proof}[Sketch of proof of Theorem~\ref{thm:yako2} (the uniqueness part), cf. {\cite[\S 5]{Yakovlev_cyclic_sylow}}]
	Assume that $M$ and $N$ are two $W[G]$-modules such that there exists an isomorphism $\xi : \Delta(M) \to \Delta(N)$ of $G$-Yakovlev diagrams. By Corollary~\ref{cor:existence_of_F_morphism}
	there exist $W[F]$-homomorphisms $\Xi : M \to N$, $\Xi' : N \to M$ which induce
	$\xi$ and $\xi^{-1}$. Let $g_1, \ldots, g_m \in G$ be the representatives of $G/F$.
	Define the $W[G]$-homomorphisms $f : M \to N$ and $g : N \to M$ as:
	\begin{align*}
		f(x) := \frac 1m \sum_{i = 1}^m g_i^{-1} \cdot \Xi(g_i \cdot x),\\
		g(x) := \frac 1m \sum_{i = 1}^m g_i^{-1} \cdot \Xi'(g_i \cdot x).
	\end{align*}
	The maps $f$ and $g$ still induce $\xi$ and $\xi^{-1}$ on $\Delta(M)$ and $\Delta(N)$, cf. \cite[Lemma~5.1]{Yakovlev_cyclic_sylow}.
	Pick $f'$ and $g'$ as in Proposition~\ref{prop:key_lemma_G_yakovlev} below. Define $M^0 := \ker f'$, $M^{\Delta} := \im g'$, $N^0 := \ker g'$, $N^{\Delta} := \im f'$. One easily checks that $M \cong M^{\Delta} \oplus M^0$, $N \cong N^{\Delta} \oplus N^0$, $\Delta(M^0) \cong \Delta(N^0) \cong 0$ and $f'$, $g'$ induce isomorphisms between $M^{\Delta}$ and $N^{\Delta}$. The proof follows by Proposition~\ref{prop:trivial_source}.
\end{proof}
\begin{Proposition} \label{prop:key_lemma_G_yakovlev}
	Let $M$ and $N$ be two $W[G]$-modules that are free and finitely generated as $W$-modules. Suppose that $f : M \to N$ and $g : N \to M$ are two
	$W[G]$-homomorphisms such that $\Delta(f) \circ \Delta(g) = \id$,
	$\Delta(g) \circ \Delta(f) = \id$. Then there exist $W[G]$-homomorphisms $f' : M \to N$ and $g' : N \to M$ such that:
	\begin{enumerate}
		\item $f' \circ g' \circ f' = f'$,
		\item $g' \circ f' \circ g' = g'$,
		\item $\Delta(f') \circ \Delta(g') = \id$ and $\Delta(g') \circ \Delta(f') = \id$.
	\end{enumerate}
\end{Proposition}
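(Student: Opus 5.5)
The plan is to use a Fitting-type argument in the finite-length category $\YD_W(G)$ combined with finiteness of $W$-rank of $\operatorname{End}$. Put $h := g \circ f \in \operatorname{End}_{W[G]}(M)$. By hypothesis $\Delta(h) = \id$. Each $\mc D_t = H^1(F_t, M)$ has finite $W$-length. Hence $\operatorname{End}_{W[G]}(M)$ is a finite free $W$-module, in particular a finite $W$-algebra. The aim is to replace $h$ by an idempotent $e$ with $\Delta(e) = \id$.

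\textbf{Step 1.} Consider the $W$-subalgebra $W[h] \subset \operatorname{End}_{W[G]}(M)$. It is finite over the complete local ring $W$, so it is a finite product of local rings. Therefore there is an idempotent $e \in W[h]$ such that $eh$ is a unit in $eW[h]$ and $(1-e)h$ lies in the radical of $(1-e)W[h]$. Concretely, $e$ is obtained as a limit of $h^{N!}$-type expressions, or by Hensel lifting the idempotents of $W[h]/p$ coming from the Fitting decomposition of $\bar h$ on $M/p$. The same works for $h' := f \circ g$ on $N$.

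\textbf{Step 2.} I would show $\Delta(e) = \id$. Since $\Delta$ is additive and $\Delta(h)=\id$, the map $\Delta$ induces a ring homomorphism $W[h] \to W[\Delta(h)] = W$ (or to its image in $\operatorname{End}(\Delta(M))$). Idempotents map to idempotents. The element $(1-e)h$ is topologically nilpotent, meaning $((1-e)h)^k \in pW[h]$ for large $k$. So $\Delta((1-e)h) = \Delta(1-e)$ is an idempotent which is topologically nilpotent on the finite-length object $\Delta(M)$. Since $\Delta(M)$ is killed by $p^n$, it is nilpotent, hence zero. Thus $\Delta(e)=\id$.

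\textbf{Step 3.} Now define $g' := (eh)^{-1} e g$, with the inverse taken in $eW[h]$, and $f' := f e$. Then $g' f' = (eh)^{-1} e g f e = (eh)^{-1} e h e = e$, since $e$ commutes with $h$. This gives $g'f'g' = e g' = g'$ and $f'g'f' = f e = f'$, which are properties (1) and (2). For (3), Step 2 gives $\Delta(g')\Delta(f') = \Delta(e) = \id$. Also $\Delta(f') = \Delta(f)$, and $\Delta(g') = \Delta(h)^{-1}\Delta(g) = \Delta(g)$, since the image of $(eh)^{-1}$ is inverse to $\Delta(eh)=\id$. Hence $\Delta(f')\Delta(g') = \Delta(f)\Delta(g) = \id$.

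The main obstacle is Step 2, namely justifying the existence of $e$ and its compatibility with $\Delta$. This is the point where Yakovlev uses finiteness of $k$, via a power $h^{N}$ being idempotent in a finite ring. Over infinite perfect $k$, I would instead invoke that a module-finite $W$-algebra is semiperfect (Hensel's lemma for $W$ complete), so idempotents lift from $W[h]/p$.
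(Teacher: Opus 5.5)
Your proof is correct. Underneath it is the same idea as the paper's: split $M$ according to where $h=g\circ f$ is invertible and where it is topologically nilpotent, and use the projector onto the invertible part.

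The paper does this by hand. It factors $\chi_{g\circ f}=P\cdot Q$ using Newton polygons, with $p\nmid P(0)$ and all roots of $Q$ of positive valuation. It then obtains $S$ from Lemma~\ref{lem:inverse} and concludes with Cayley--Hamilton. One checks that its $g'\circ f'=(g\circ f)\,Q(g\circ f)^2S(g\circ f)$ is exactly your idempotent $e$, because $xQ(x)^2S(x)$ is $\equiv 1 \bmod P$ and $\equiv 0 \bmod Q$.

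The two key facts are obtained differently:
\begin{itemize}
\item You get $e$ from the decomposition of the commutative finite $W$-algebra $W[h]$ into local rings, i.e.\ idempotent lifting over the complete ring $W$. The paper instead writes $e$ as an explicit polynomial in $h$.
\item You prove $\Delta(e)=\id$ by noting that an idempotent, topologically nilpotent endomorphism of a diagram killed by $p^n$ must vanish. The paper normalizes $F(1)=G(1)=1$, so that $\Delta(f')=\Delta(f)$ and $\Delta(g')=\Delta(g)$ follow directly from $\Delta(g\circ f)=\id$.
\end{itemize}
The paper's route is completely elementary and explicit. Yours is shorter and more conceptual, and it makes clear why finiteness of $k$ plays no role.

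Two small corrections:
\begin{itemize}
\item $\operatorname{End}_{W[G]}(M)$ is finite free over $W$ because $M$ is (it sits inside $\operatorname{End}_W(M)$). This has nothing to do with the $\mathcal{D}_t$ having finite length.
\item Describing $e$ as a limit of powers $h^{N!}$ is only valid for finite $k$. Keep the idempotent-lifting justification instead.
\end{itemize}
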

The original proof of Proposition~\ref{prop:key_lemma_G_yakovlev} (cf. \cite[Lemma~5.2]{Yakovlev_cyclic_sylow})
uses the fact that $k$ is finite. Thus this is the only part in which we 
diverge from Yakovlev's work. The main idea is to define
$f'$ and $g'$ as $f \circ F(g \circ f)$ and $g \circ G(f \circ g)$
for certain polynomials $F, G \in W[x]$. In the sequel we will need
the following fact.
\begin{Lemma} \label{lem:inverse}
	For any monic polynomials
	$A, B \in W[x]$:
	\begin{equation*}
		A(x) \in \left( W[x]/(B(x)) \right)^{\times} \quad \Leftrightarrow \quad
		\GCD(\ol A, \ol B) = 1 \quad \textrm{ in } k[x],
	\end{equation*}
	where $\ol A$ and $\ol B$ denote the images of $A, B \in W[x]$ in $k[x]$.
\end{Lemma}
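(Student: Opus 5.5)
We want to show that for monic $A, B \in W[x]$, $A$ is a unit in $R := W[x]/(B)$ if and only if $\ol A$ and $\ol B$ are coprime in $k[x]$. The basic observation is that $B$ is monic, say of degree $d$. Hence $R$ is a free $W$-module with basis $1, x, \ldots, x^{d-1}$; in particular $R$ is finite over $W$ and $p$-adically complete. Moreover, $R/pR \cong k[x]/(\ol B)$. The plan is to reduce the invertibility question modulo $p$ and then lift back.

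For ($\Rightarrow$), suppose $A C \equiv 1 \pmod{B}$ in $W[x]$. Reducing modulo $p$ gives $\ol A\, \ol C \equiv 1 \pmod{\ol B}$ in $k[x]$. Then any common divisor of $\ol A$ and $\ol B$ divides $1$, so $\GCD(\ol A, \ol B) = 1$. Monicity of $A$ is not even needed in this direction.

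For ($\Leftarrow$), assume $\GCD(\ol A, \ol B) = 1$. By Bézout in $k[x]$, the image of $A$ in $R/pR = k[x]/(\ol B)$ is a unit. Choose $C \in W[x]$ lifting an inverse, so that $AC = 1 - y$ in $R$ for some $y \in pR$. Since $R$ is a finitely generated $W$-module and $W$ is $p$-adically complete, $R$ is $p$-adically complete. Therefore the geometric series $\sum_{j \ge 0} y^j$ converges in $R$, and $1 - y$ is invertible. It follows that $A$ is a unit in $R$, with inverse $C \sum_j y^j$.

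An alternative route to ($\Leftarrow$) avoids completeness. The element $A$ acts $W$-linearly on the free module $R$ of rank $d$, and it is a unit if and only if $\det(\text{mult}_A) \in W^\times$. The reduction of this determinant is $\det$ of multiplication by $\ol A$ on $k[x]/(\ol B)$, which is nonzero exactly when $\ol A$ is a unit there. Since $W$ is local, this gives the equivalence directly, and in fact gives both directions at once.

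The only step needing care is that $R$ is finite free over $W$, which is where monicity of $B$ is used. Beyond that I expect no real obstacle.
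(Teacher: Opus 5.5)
Your proof is correct, and it takes a genuinely different route from the paper's.

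The paper works with polynomial identities. From $\GCD(\ol A,\ol B)=1$ it deduces that $A$ and $B$ are coprime in $K[x]$. It then writes $AS+BT=1$ with $S,T\in K[x]$, $\deg S<\deg B$, $\deg T<\deg A$, and uses the $p$-adic valuation to clear denominators. If $S=p^{\alpha}S_1$, $T=p^{\alpha}T_1$ with $\alpha<0$, reducing modulo $p$ gives a nontrivial relation $\ol A\,\ol S_1+\ol B\,\ol T_1=0$ with $\deg \ol S_1<\deg \ol B$. This contradicts coprimality, so in fact $S,T\in W[x]$.

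You instead pass to the finite free $W$-algebra $R=W[x]/(B)$, with $R/pR\cong k[x]/(\ol B)$, and use that units lift along $R\to R/pR$. You justify the lifting either by $p$-adic completeness of $R$ (geometric series) or by detecting invertibility of multiplication by $A$ through its determinant in the local ring $W$.

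Your route is shorter and more conceptual:
\begin{itemize}
\item it avoids the detour through gcds over $K$, including the integrality of the monic gcd of $A$ and $B$;
\item the determinant version works over any local base ring, without completeness;
\item it identifies the criterion explicitly: the determinant of multiplication by $A$ on $W[x]/(B)$, essentially the resultant of $B$ and $A$, must be a unit in $W$;
\item it never uses that $A$ is monic, only that $B$ is.
\end{itemize}
The paper's argument is more hands-on. It relies on $W$ being a discrete valuation ring, but it produces the B\'ezout identity $AS+BT=1$ directly in $W[x]$ with explicit degree bounds.
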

\begin{proof}
	Clearly, if $A(x) \in \left( W[x]/(B(x)) \right)^{\times}$ then $A \cdot S + B \cdot T = 1$ for some $S, T \in W[x]$.
	Thus $\ol A \cdot \ol S + \ol B \cdot \ol T = 1$, which implies that $\GCD(\ol A, \ol B) = 1$.\\
	The Euclidean algorithm implies that $\GCD(A, B) \in W[x]$ is a monic polynomial. Moreover, $\ol{\GCD(A, B)} \mid \GCD(\ol A, \ol B)$.
	Thus $\GCD(\ol A, \ol B) = 1$ implies that $\GCD(A, B) = 1$ in $K[x]$. In particular, $A \cdot S + B \cdot T = 1$ for some $S, T \in K[x]$,
	$\deg S < \deg B$, $\deg T < \deg A$, $(S, T) \neq (0, 0)$. Let $\alpha \in \ZZ$ be the maximal number such that
	$S = p^{\alpha} \cdot S_1$, $T = p^{\alpha} \cdot T_1$, $S_1, T_1 \in W[x]$. Clearly, $\alpha \le 0$. Assume to the contrary that $\alpha \neq 0$.
	Then $\ol A \cdot \ol S_1 + \ol B \cdot \ol T_1 = \ol{p^{- \alpha}} = 0$ and $(\ol S_1, \ol T_1) \neq (0, 0)$. This easily implies that $\GCD(\ol A, \ol B) \neq 1$.
	The contradiction means that $\alpha = 0$ and $S, T \in W[x]$. Hence $S$ is the inverse of $A$ in $W[x]/(B(x))$.
\end{proof}

\begin{proof}[Proof of Proposition~\ref{prop:key_lemma_G_yakovlev}]
	For any endomorphism $\varphi$ we write $\chi_{\varphi}(x) := \det(x \cdot \id - \varphi)$
	for its characteristic polynomial. By \cite[\S 11.6, Corollary]{Prasolov_linear_algebra} we have $\chi_{f \circ g}(x) = x^c \cdot \chi_{g \circ f}(x)$ for $c := \rank_W N - \rank_W M \in \ZZ$. Without loss of generality, assume that $c \le 0$. By the main theorem on Newton polygons (see e.g. \cite[Proposition II.6.4]{Neukirch_ANT}) we can factor $\chi_{g \circ f}(x)$ as:
	\[
	P(x) \cdot Q(x),
	\]
	where $P, Q \in W[x]$, $p \nmid P(0)$ and all the roots of $Q(x)$ in $\ol K$ have positive valuation. In particular, all the coefficients of $Q(x)$ except the leading one have positive valuation and $Q(1) \in W^{\times}$. By Lemma~\ref{lem:inverse}, since $p \nmid P(0)$, the element $x \cdot Q(x)^2$ has an inverse in $W[x]/((x-1) \cdot P(x))$. We fix a representative $S(x) \in W[x]$ of this inverse. 
	Let:
	\begin{align*}
		F(x) &:= S(x) \cdot Q(x) \cdot Q(1),\\
		G(x) &:= Q(x) \cdot Q(1)^{-1},\\
		f' &:= f \circ F(g \circ f),\\
		g' &:= g \circ G(f \circ g).
	\end{align*}
	Clearly, $f'$ and $g'$ are homomorphisms of $W[G]$-modules.
	Note now that
	\begin{equation} \label{eqn:chi|xFFG-F}
		\chi_{g \circ f}(x) \mid x \cdot F(x)^2 \cdot G(x) - F(x)
	\end{equation}
	in $W[x]$. Indeed:
	\[
	\frac{x \cdot F(x)^2 \cdot G(x) - F(x)}{\chi_{g \circ f}(x)} = S(x) \cdot Q(1) \cdot \frac{x \cdot Q(x)^2 \cdot S(x) - 1}{P(x)}, 
	\]
	and the last fraction is in $W[x]$ by assumption. On the other hand, by the Cayley--Hamilton theorem (cf. \cite[\S 13.2]{Prasolov_linear_algebra}) we have $\chi_{g \circ f}(g \circ f) = 0$. Therefore, by~\eqref{eqn:chi|xFFG-F}, using the identity
	$g \circ (f \circ g)^i = (g \circ f)^i \circ g$:
	\begin{align*}
		f' \circ g' \circ f' &= f \circ F(g \circ f) \circ g \circ G(f \circ g)
		\circ f \circ F(g \circ f)\\
		&= f \circ (g \circ f) \circ F(g \circ f)^2 \circ G(g \circ f)\\
		&= f \circ F(g \circ f) = f'.
	\end{align*}
	Similarly, one obtains $g' \circ f' \circ g' = g'$. We prove now that $f'$ and $g'$ induce mutually inverse isomorphisms on $\Delta(M)$, $\Delta(N)$. Indeed, the definition of $S$ easily implies that $S(1) = Q(1)^{-2}$ and thus $F(1) = G(1) = 1$. Hence:
	\begin{align*}
		\Delta(f') \circ \Delta(g') &= \Delta(f) \circ F(\Delta(g \circ f)) \circ \Delta(g) \circ G(\Delta(f \circ g))\\
		&= \Delta(f) \circ F(\id) \circ \Delta(g) \circ G(\id) = \Delta(f) \circ \Delta(g) = \id,
	\end{align*}
	and similarly $\Delta(g') \circ \Delta(f') = \id$. This finishes the proof.
\end{proof}
\section{The modules $\JJ_i$ and $\SS_i$} \label{sec:J_and_I}
In this section we discuss various properties of the $W[F]$-modules $\JJ_i$
and $\SS_i$ that appear in the description of the crystalline cohomology of $\ZZ/p^n$-covers. We start by giving a more explicit description
of those modules. For any $a = \sum_{j = 0}^{p^n - 1} a_j \sigma^j \in W[F]$ we put:
\[
S_{r, i}(a) := \sum_{j \equiv r \pmod{p^{n - i}}} a_j \in W.
\]
One easily checks that the map $W[F] \to W[F/F_i]$ is given by the formula:
\[
	a \mapsto \sum_{t = 0}^{p^{n-i}-1} S_{t, i}(a) \cdot \ol{\sigma}^t,
\]
where $\ol{\sigma}$ is the image of $\sigma$ in $F/F_i$.
Hence the modules $\JJ_i$ and $\SS_i$ (defined as in Section~\ref{sec:intro})
can be explicitly described as:
\begin{align*}
	\JJ_i &= \{ a \in W[F] : S_{0, i}(a) = \ldots = S_{p^{n-i} - 1, i}(a) = 0 \},\\
	\SS_i &= \{ a \in W[F] : S_{0, i}(a) = \ldots = S_{p^{n-i} - 1, i}(a) \}.
\end{align*}
Note that $\JJ_i$ is the ideal in $W[F]$ generated by $\sigma_i - 1$.
It is immediate that $\SS_n \cong W[F]$ and $\SS_0 \cong W$ (a trivial $F$-module). Moreover,
$\JJ_n$ can be identified with $I_F$, the augmentation ideal of $W[F]$, and 
\begin{equation} \label{eqn:Ji_is_induced}
	\JJ_i \cong \Ind^F_{F_i} I_{F_i}.
\end{equation}
The modules $\JJ_i$ and $\SS_i$ are related through
the following exact sequence of $W[F]$-modules:
\begin{equation} \label{eqn:ses_for_SS}
	0 \to \JJ_i \to \SS_i \to W \to 0,
\end{equation}
where the first map is the inclusion and the second map is $a \mapsto S_{0, i}(a)$.\\

We now compute the Yakovlev diagrams of $\JJ_i$ and $\SS_i$.
Write $q_t$ for the quotient map $F/F_t \to F/F_{t+1}$
and $\rho_t$ for the map $W_i[F/F_{t+1}] \to W_i[F/F_t]$,
induced by $\rho_{G, H}$ with $G := F/F_t$, $H := F_{t+1}/F_t$.
Note also that for any $t$ there exists the multiplication-by-$p$ map
$W_t \to W_{t+1}$ (defined by $1 \mapsto p$), which we also denote by $p$ by abuse of notation. We write $r_t : W_{t+1} \to W_t$ for the reduction mod $p^t$ map.
\begin{Proposition} \label{prop:YD_Ji_Si}
	Fix $1 \le i \le n$ and let $\msm(t) := \min(i, t)$ and $\msM(t) := \max(i, t)$.
\begin{enumerate}
	\item We have $\Delta(\JJ_i) = (W_{\msm(t)}[F/F_{\msM(t)}], \alpha_{\JJ, t}, \beta_{\JJ, t})$, where:
	\begin{align*}
		\alpha_{\JJ, t} &=
		\begin{cases}
			p : W_t[F/F_i] \to W_{t+1}[F/F_i], & \textrm{ if } t < i,\\
			q_t : W_i[F/F_t] \to W_i[F/F_{t+1}], & \textrm{ if } t \ge i,
		\end{cases}\\
		\beta_{\JJ, t} &= 
		\begin{cases}
			r_t : W_{t+1}[F/F_i] \to W_t[F/F_i], & \textrm{ if } t < i,\\
			\rho_t : W_i[F/F_{t+1}] \to W_i[F/F_t],& \textrm{ if } t \ge i.
		\end{cases}
	\end{align*}
	
	\item We have $\Delta(\SS_i) = (W_{\msm(t)}[F/F_{\msM(t)}]/(N_{F/F_{\msM(t)}} \cdot W_{\msm(t)}), \alpha_{\SS, t}, \beta_{\SS, t})$, where the maps $\alpha_{\SS, t}, \beta_{\SS, t}$ are induced by the maps $\alpha_{\JJ, t}$, $\beta_{\JJ, t}$ defined above.
\end{enumerate}
\end{Proposition}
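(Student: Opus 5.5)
For part~(1) I will reduce to the case $i = n$ using \eqref{eqn:Ji_is_induced} together with Lemma~\ref{lem:induced_YD}. Since $\JJ_i \cong \Ind^F_{F_i} I_{F_i}$, that lemma gives $\Delta(\JJ_i) \cong \Ind^F_{F_i} \Delta(I_{F_i})$, where $\Delta(I_{F_i})$ is the $F_i$-Yakovlev diagram of the augmentation ideal of the cyclic group $F_i$ of order $p^i$.

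So the first task is the augmentation ideal $I = I_{F_i}$ with $t \le i$. I will use the exact sequence $0 \to I \to W[F_i] \to W \to 0$. Because $W[F_i]$ is induced from the trivial subgroup, it is cohomologically trivial, and the long exact sequence gives $H^1(F_t, I) \cong H^0$-Tate of $W$, i.e. $\hat H^0(F_t, W) = W/p^t = W_t$. The same shift identifies $H^1(F_t, I)$ with $H^2(F_t, W) = W^{F_t}/N_{F_t}W$. The connecting maps are compatible with restriction and corestriction, so I only need res and cores on $H^2(F_t, W)$, which are given by the explicit formulas earlier in the paper. Restriction $H^2(F_{t+1}, W) \to H^2(F_t, W)$ is $[x] \mapsto [x]$, which is the reduction $r_t : W_{t+1} \to W_t$. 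Corestriction is multiplication by $N_{F_{t+1}/F_t}$, which acts on the trivial module $W$ as $p$, giving $p : W_t \to W_{t+1}$.

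To be safe, I will also check this directly on cocycles. An element of $\ker N_{F_t}$ on $I$ modulo $(\sigma_t - 1)I$ is sent, via $a \mapsto S$-sums, to a class in $W_t$. This direct check is the step where sign and normalisation issues could arise; I expect it to be the main, if routine, obstacle.

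Next I apply the definition of $\Ind^F_{F_i}$ on Yakovlev diagrams. For $t \le i$ the terms are $\Ind^F_{F_i} W_t = W_t[F/F_i]$, with the induced maps $p$ and $r_t$. For $t \ge i$ the terms are $\Ind^{F/F_t} W_i = W_i[F/F_t]$, with the maps $q_t$ and $\rho_t$. This is exactly the claimed diagram $(W_{\msm(t)}[F/F_{\msM(t)}], \alpha_{\JJ,t}, \beta_{\JJ,t})$.

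For part~(2) I will use the sequence \eqref{eqn:ses_for_SS} in its long exact form:
\[
H^1(F_t, \JJ_i) \to H^1(F_t, \SS_i) \to H^1(F_t, W) \to \dots
\]
Since $H^1(F_t, W) = \ker(N_{F_t})/0 = 0$ on the torsion-free module $W$, the map $H^1(F_t,\JJ_i) \to H^1(F_t,\SS_i)$ is surjective. Its kernel is the image of the connecting map $H^0(F_t, W) = W \to H^1(F_t, \JJ_i)$. To compute this connecting map, I lift $1 \in W$ to $1 \in \SS_i$, or to a suitable element $e$ with $S_{0,i}(e) = 1$, for instance $e = N_{F_i}$ up to normalisation, which lies in $\SS_i$. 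I then apply $\sigma_t - 1$, which lands in $\JJ_i$.

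Transporting this image through the identification from part~(1) gives the element $N_{F/F_{\msM(t)}} \cdot 1$ in $W_{\msm(t)}[F/F_{\msM(t)}]$. Equivalently, it is the element coming from $N_F \in \JJ_i$-cohomology. I will verify this in the two ranges $t \le i$ and $t > i$ separately, using the explicit identifications from part~(1).

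Because the long exact sequences are natural in res and cores, the maps $\alpha_{\SS,t}, \beta_{\SS,t}$ are the ones induced from $\alpha_{\JJ,t}, \beta_{\JJ,t}$. As a consistency check, these maps preserve the norm lines: $q_t$ sends norm to norm, $\rho_t$ sends norm to $p$ times norm, and $p$ and $r_t$ obviously preserve them.
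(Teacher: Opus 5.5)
\textbf{Part (1).} This part is correct, but you compute $\Delta(I_{F_i})$ by a different route from the paper. Both you and the paper reduce to the augmentation ideal via \eqref{eqn:Ji_is_induced} and Lemma~\ref{lem:induced_YD}. The paper then works by hand:
it shows that $\ker(N_{F_t})=(\sigma_t-1)W[F]$;
it shows that every class is a $W$-multiple of $[\sigma_t-1]$, so $F$ acts trivially;
it shows by an explicit coefficient argument that $p^j[\sigma_t-1]=0$ exactly when $j\ge t$;
finally it evaluates $\res$ and $\cores$ on $[\sigma_t-1]$.

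You instead dimension-shift along $0\to I\to W[F_i]\to W\to 0$, using that $W[F_i]$ is free over $W[F_t]$. This gives $H^1(F_t,I)\cong W^{F_t}/N_{F_t}W=W_t$ at once, and the transition maps then follow from the formulas for $\res,\cores$ on $M^{F_t}/N_{F_t}M$. It is the same trick the paper itself uses in Proposition~\ref{prop:Delta_J_otimes_k}(1), and it is shorter and less error-prone. To make it airtight in the paper's explicit normalisation, note that lifting $w$ to $w\cdot 1\in W[F_i]$ gives the connecting map $w\mapsto w[\sigma_t-1]$. Compatibility with the paper's $\res$ and $\cores$ on $H^1$ is then the two-line check $N_{F_{t+1}/F_t}(\sigma_{t+1}-1)=\sigma_t-1$ and $[\sigma_t-1]=p[\sigma_{t+1}-1]$ in $H^1(F_{t+1},I)$. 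This also hands you the explicit generator you need in part (2).

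\textbf{Part (2).} Here you follow the paper's route exactly: the long exact sequence of \eqref{eqn:ses_for_SS}, $H^1(F_t,W)=0$, and the cokernel of the connecting map. However, the concrete lifts you propose are wrong and would break the computation. For $i<n$ neither $1$ nor $N_{F_i}$ lies in $\SS_i$: their images in $W[F/F_i]$ are $\bar 1$ and $p^i\bar 1$, which are not multiples of $N_{F/F_i}$. If you used them anyway:
for $t\le i$ you would get $(\sigma_t-1)N_{F_i}=0$ (a zero connecting map), or the class $\bar 1$ instead of $N_{F/F_i}$;
for $t>i$ the elements $(\sigma_t-1)\cdot 1$ and $(\sigma_t-1)N_{F_i}$ are not even in $\JJ_i$.
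Similarly $N_F\notin\JJ_i$, so your ``equivalently'' remark is not meaningful.

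A correct lift is $e:=\sum_{r=0}^{p^{n-i}-1}\sigma^r$, which has $S_{r,i}(e)=1$ for all $r$.
For $t\le i$, $(\sigma_t-1)e=\sum_r\sigma^r(\sigma_t-1)$ corresponds to $N_{F/F_i}\in W_t[F/F_i]$.
For $t>i$, write $r=r_0+p^{n-t}s$; this gives $(\sigma_t-1)e=\sum_{r_0=0}^{p^{n-t}-1}\sigma^{r_0}(\sigma_i-1)$. Under the Shapiro identification (restrict to $F_i$, then project), this corresponds to $N_{F/F_t}\in W_i[F/F_t]$.
This is precisely the ``one easily checks'' step in the paper.

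Finally, your consistency check has the roles swapped: $q_t(N_{F/F_t})=p\,N_{F/F_{t+1}}$, while $\rho_t(N_{F/F_{t+1}})=N_{F/F_t}$. The conclusion that the norm lines are preserved is unaffected.
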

\begin{proof}
(1) By~\eqref{eqn:Ji_is_induced} and Lemma~\ref{lem:induced_YD} it suffices to prove that the Yakovlev diagram of~$I_F$ is isomorphic to the diagram:
\begin{equation*}
	\begin{tikzcd}
		W_{1} \arrow[r, "p", shift left] & W_{2} \arrow[l, "r_{1}", shift left] \arrow[r, "p", shift left] & \cdots \arrow[l, "r_{2}", shift left] \arrow[r, "p", shift left] & W_{n-1} \arrow[l, "r_{n-2}", shift left] \arrow[r, "p", shift left] & W_{n}, \arrow[l, "r_{n-1}", shift left]
	\end{tikzcd}
\end{equation*}
where $W_t$ has a trivial $F$-action.
We start by proving that
\begin{equation} \label{eqn:H1_of_Jn}
	H^1(F_t, \JJ_n) = \Span_{W_t}([\sigma_t - 1]) \cong W_t.
\end{equation}
Note that $\ker(N_{F_t} : \JJ_n \to \JJ_n) = \JJ_t = (\sigma_t - 1) W[F]$.
Moreover, for any $a \ge 0$ we have $[(\sigma_t - 1) \cdot \sigma^a] = [\sigma_t - 1]$, since: 
\[
(\sigma_t - 1) \cdot \sigma^a - (\sigma_t - 1) = (\sigma_t - 1) \cdot (\sigma^a - 1) \in (\sigma_t - 1) \cdot \JJ_n.
\]
This proves that $H^1(F_t, \JJ_n)$ is a trivial $F$-module and that $H^1(F_t, \JJ_n) = W \cdot [\sigma_t - 1]$. We show now that 
$p^j \cdot [\sigma_t - 1] = 0$ holds iff $j \ge t$. Indeed:
\begin{align*}
	p^t \cdot (\sigma_t - 1) &= p^t \cdot (\sigma_t - 1) - (\sigma_t^{p^t} - 1)\\
	&= (\sigma_t - 1) \cdot \sum_{l = 0}^{p^t - 1} ( 1 - \sigma_t^l) \in (\sigma_t - 1) \JJ_n.
\end{align*}
Finally, if $p^j \cdot [\sigma_t - 1] = 0$ then
\begin{align*}
	p^j \cdot (\sigma_t - 1) = (\sigma_t - 1) \cdot \sum_{l = 0}^{p^n - 1} a_l \sigma^l,
\end{align*}
where $a_l \in W$, $\sum_{l = 0}^{p^n - 1} a_l = 0$. For any $l \in \ZZ$ let $a_l := a_{l \bmod{p^n}}$. Then:
\[
p^j \cdot (\sigma_t - 1) = \sum_{l = 0}^{p^n - 1} (a_{l - p^{n-t}} - a_l) \cdot \sigma^l.
\]
Thus
\[
a_{l - p^{n-t}} - a_l =
\begin{cases}
	-p^j, & l = 0,\\
	p^j, & l = p^{n-t},\\
	0, & l \neq 0, p^{n-t}.
\end{cases}
\]
This easily implies that:
\[
a_l =
\begin{cases}
	a_{l \bmod{p^{n-t}}}, & p^{n-t} \nmid l,\\
	a_{p^{n-t}}, & p^{n-t} | l \textrm{ and } l \neq 0,\\
	a_{p^{n-t}} + p^j, & l = 0.
\end{cases}
\]
Therefore:
\[
0 = \sum_{l = 0}^{p^n - 1} a_l = p^t \cdot \sum_{l = 1}^{p^{n-t}} a_l + p^j,
\]
which yields $j \ge t$. This ends the proof of~\eqref{eqn:H1_of_Jn}. Finally, we have:
\begin{align*}
	\res([\sigma_{t+1} - 1]) &= (1 + \sigma_{t+1} + \cdots + \sigma_{t+1}^{p-1}) \cdot [\sigma_{t+1} - 1] = [\sigma_t - 1]\\
	\cores([\sigma_t - 1]) &= [\sigma_t - 1] = [\sigma_{t+1}^p - 1]
	= \sum_{l = 0}^{p - 1} \sigma_{t+1}^l \cdot [\sigma_{t+1} - 1]\\
	&= p \cdot [\sigma_{t+1} - 1],
\end{align*}
where for the last equality we used the fact that $F_{t+1}$ acts trivially on $H^1(F_{t+1}, \JJ_n)$. This ends the proof of (1).\\
(2) Observe that $H^1(F_t, W) = 0$. Thus the long exact sequence of
group cohomology applied to the exact sequence~\eqref{eqn:ses_for_SS} and (1) yield:
\[
H^1(F_t, \SS_i) = \coker(W \to H^1(F_t, \JJ_i) \cong W_{\msm(t)}[F/F_{\msM(t)}]).
\]
One easily checks that the map $W \to H^1(F_t, \JJ_i) \cong W_{\msm(t)}[F/F_{\msM(t)}]$ is
$a \mapsto N_{F/F_{\msM(t)}} \cdot a$.
\end{proof}
\begin{Corollary} \label{cor:betas_injective}
Suppose that $\DD \cong \Delta(\JJ_i)$ or $\DD \cong \Delta(\SS_i)$ for some $1 \le i \le n$. Then:
\begin{enumerate}
	\item $\ker(\beta_t \otimes k) = 0$ for all $1 \le t < n$,
	
	\item $\alpha_t : \DD_t \to \DD_{t+1}$ is surjective for all $t \ge i$.
\end{enumerate}
\end{Corollary}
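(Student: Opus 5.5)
The plan is a direct check from the explicit description of $\Delta(\JJ_i)$ and $\Delta(\SS_i)$ in Proposition~\ref{prop:YD_Ji_Si}. Both properties are clearly invariant under isomorphism of Yakovlev diagrams, so we may take $\DD$ to be exactly the diagram of that proposition. We split each claim into the ranges $t < i$ and $t \ge i$, and treat $\JJ_i$ first, then pass to $\SS_i$ by quotienting.

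For (1) with $\JJ_i$ and $t < i$, the map $\beta_t = r_t : W_{t+1}[F/F_i] \to W_t[F/F_i]$ is the reduction map. After tensoring with $k$ both sides become $k[F/F_i]$, and $r_t \otimes k$ is the identity, so it is injective. For $t \ge i$, $\beta_t = \rho_t : W_i[F/F_{t+1}] \to W_i[F/F_t]$ sends $\ol g$ to the sum of its $p$ preimages. This map sends basis elements to sums over disjoint fibres, so it is a split injection of free $W_i$-modules: its image is a direct summand, with a complement spanned by the non-chosen coset representatives. Hence $\rho_t \otimes k$ is injective, and it is again given by summing over fibres.

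For $\SS_i$, the terms are the quotients of the $\JJ_i$-terms by $N_{F/F_{\msM(t)}} W_{\msm(t)}$, and the maps are induced. Passing to $\otimes k$ is right exact, so $\DD_t \otimes k = k[F/F_{\msM(t)}]/k\cdot N$. For $t<i$ this is the identity map of $k[F/F_i]/kN$. For $t \ge i$ we check that $\rho_t(N_{F/F_{t+1}}) = N_{F/F_t}$, so $\rho_t \otimes k$ maps $k[F/F_{t+1}]$ injectively with $kN \mapsto kN$. The preimage of $kN_{F/F_t}$ is then exactly $kN_{F/F_{t+1}}$, which gives injectivity on the quotients.

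For (2), $t \ge i$ gives $\alpha_t = q_t : W_i[F/F_t] \to W_i[F/F_{t+1}]$, the map induced by the surjection of groups. This is surjective, and so is the induced map on the quotients by norms, since a surjection induces a surjection on cokernels (note that $q_t$ maps $N$ to a multiple $pN$, but surjectivity on quotients needs no more than this).

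The main point needing care is the injectivity after $\otimes k$ for the $\SS_i$ quotients. The step that makes it work is the compatibility $\rho_t(N_{F/F_{t+1}}) = N_{F/F_t}$ together with the direct-summand property of $\rho_t$.
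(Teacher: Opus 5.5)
Your proposal is correct and follows essentially the same route as the paper. Both arguments are a direct check on the explicit diagrams of Proposition~\ref{prop:YD_Ji_Si}: $\beta_t\otimes k$ is the identity for $t<i$, $\rho_t$ is injective modulo norms for $t\ge i$, and $q_t$ is surjective. The only cosmetic difference is that the paper computes the kernel of $\rho_t$ on $k[F/F_{t+1}]/(k\cdot N_{F/F_{t+1}})$ by writing $\rho_t(a)$ in coordinates. You instead deduce it from the split injectivity of $\rho_t$ together with $\rho_t(N_{F/F_{t+1}}) = N_{F/F_t}$.
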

\begin{proof}
	Assume that $\DD \cong \Delta(\SS_i)$. 
	If $t < i$, one easily checks that $\beta_t \otimes k = \id$. Assume now that $t \ge i$. Then $\beta_t \otimes k$ can be identified with:
	\[
		\rho_t : k[F/F_{t+1}]/(k \cdot N_{F/F_{t+1}}) \to k[F/F_t]/(k \cdot N_{F/F_t}).
	\]
	Note that if $a = \sum_{i = 0}^{p^{n-t-1} - 1} a_i \ol{\sigma}^i \in \DD_{t+1} \otimes k$
	then:
	\[
		\rho_t(a) = \sum_{i = 0}^{p^{n-t} - 1} a_{i \bmod p^{n - t - 1}} \cdot \ol{\sigma}^i.
	\]
	Hence, if $a \in \ker \rho_t$, then $a_0 = a_1 = \cdots = a_{p^{n-t-1} - 1}$
	and $a = 0$ in $k[F/F_{t+1}]/(k \cdot N_{F/F_{t+1}})$.
	Finally, the maps
	\[
		\alpha_{\SS, t} = q_t : W_i[F/F_t]/(W_i \cdot N_{F/F_t}) \to W_i[F/F_{t+1}]/(W_i \cdot N_{F/F_{t+1}})
	\]
	 are surjective for all $t \ge i$. One proves the case $\DD \cong \Delta(\JJ_i)$ in a similar manner.
\end{proof}
\begin{Corollary} \label{cor:Zp_modules}
	Let $F := \ZZ/p$. Up to isomorphism, there exist three indecomposable
	objects of $\Mod_{W[F]}^{\mathrm{ff}}$: $W$, $W[F]$ and $I_F$.
\end{Corollary}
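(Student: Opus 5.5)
The plan is to combine Theorem~\ref{thm:yako1} with an explicit classification of $F$-Yakovlev diagrams for $n = 1$. Let $F = \ZZ/p = \langle \sigma \rangle$. A Yakovlev diagram then consists of a single term $\mc D_1$ with no transition maps. This $\mc D_1$ is a $W_1[F/F_1] = k$-module, i.e.\ a finite-dimensional $k$-vector space. Hence $\YD_W(F)$ is equivalent to the category of finite-dimensional $k$-vector spaces, and the diagram of any $M \in \Mod_{W[F]}^{\mathrm{ff}}$ is determined up to isomorphism by $d(M) := \dim_k H^1(F, M)$. The proof then has three steps.

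First, compute the diagrams of the three candidates. We have $H^1(F, W) = \ker(N_F : W \to W)/0 = 0$, since $N_F$ acts on $W$ as multiplication by $p$. We have $H^1(F, W[F]) = 0$, since $W[F]$ is induced. By Proposition~\ref{prop:YD_Ji_Si}~(1) with $i = n = 1$, we get $H^1(F, I_F) \cong W_1 = k$. So $d(W) = d(W[F]) = 0$ and $d(I_F) = 1$.

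Second, let $M \in \Mod_{W[F]}^{\mathrm{ff}}$ be arbitrary and put $a := d(M)$. Then $\Delta(M) \cong \Delta(I_F^a)$. By Theorem~\ref{thm:yako1}, with $F/F_0 = F$ and $F/F_1$ trivial, there are $a_0, a_1, b_0, b_1 \ge 0$ such that
\[
M \oplus W[F]^{a_0} \oplus W^{a_1} \cong I_F^a \oplus W[F]^{b_0} \oplus W^{b_1}.
\]
Now apply the Krull--Schmidt theorem for $W[F]$-lattices. It holds because $W$ is a complete local ring, so endomorphism rings of indecomposable lattices are local. It follows that every indecomposable summand of $M$ is isomorphic to one of $W$, $W[F]$, $I_F$. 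In particular, every indecomposable object is one of these three.

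Third, check that the three modules are indecomposable and pairwise non-isomorphic. The module $W$ has rank $1$, so it is indecomposable. The modules $W[F]$ and $I_F$ are indecomposable because $W[F]$ is a local ring: its reduction $k[F] \cong k[x]/(x-1)^p$ is local. Hence $W[F]$ is indecomposable as a module over itself. For $I_F$, note that $I_F \otimes K \cong K[F]/K$ is a sum of nontrivial-character components over $K(\zeta_p)$. It is irreducible over $K$ because $\Phi_p$ stays irreducible over $K$, as $K/\QQ_p$ is unramified. So $I_F$ has no proper nonzero summand. The three are pairwise non-isomorphic because their ranks $1$, $p$, $p-1$ are pairwise distinct when $p > 2$. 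When $p = 2$, the ranks of $W$ and $I_F$ coincide, and these two are distinguished by $d$: $d(W) = 0$ while $d(I_F) = 1$.

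The main obstacle is the Krull--Schmidt step, i.e.\ cancelling the projective and trivial summands in Theorem~\ref{thm:yako1}. If one prefers to avoid it, the alternative is to invoke the discussion of Theorem~\ref{thm:yako1} directly. The minimal module $B$ from Proposition~\ref{prop:YD_induction}~(7) is a direct summand of $M$ with complement a sum of the $\Lambda_{1,t}$. These $\Lambda_{1,t}$ are $W[F]$ and $W$. Moreover $B \cong I_F^a$ by the construction, since each new generator $x_l$ satisfies $N_F x_l = 0$. This yields the decomposition $M \cong I_F^a \oplus W[F]^{c} \oplus W^{e}$ directly.
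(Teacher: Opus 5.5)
Your proof is correct and follows the paper's route: for $n=1$ an $F$-Yakovlev diagram is just a finite-dimensional $k$-vector space, $\Delta(I_F)\cong k$, and Theorem~\ref{thm:yako1} combined with Krull--Schmidt leaves only $W$, $W[F]$ and $I_F$. You also spell out the Krull--Schmidt cancellation, the indecomposability of the three modules, and their pairwise non-isomorphism (including the case $p=2$), which the paper's two-line proof leaves implicit.
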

\begin{proof}
	The $F$-Yakovlev diagrams correspond in this case to $k$-vector spaces of finite dimension. Thus the only indecomposable $F$-Yakovlev diagram is $\mc D = k$. Clearly, $\mc D = \Delta(I_F)$. By Theorem~\ref{thm:yako1}, the only indecomposable $W[F]$-modules with trivial $F$-Yakovlev diagrams are $W$ and $W[F]$.
\end{proof}
In the sequel we will also need the formula for the fixed points of $\JJ_i$ and $\SS_i$
with respect to $F_1$. Note that the map:
\[
	W[F'] \to W[F], \quad \sum_{j = 0}^{p^{n-1} - 1} a_j \cdot \ol{\sigma}^j
	\mapsto \sum_{j = 0}^{p^n - 1} a_{j \mod{p^{n-1}}} \cdot \sigma^j
\]
induces an $F'$-isomorphism onto $W[F]^{F_1}$. Indeed, if $a = \sum_{j = 0}^{p^n - 1} a_j \sigma^j \in W[F]^{F_1}$ then $a_j = a_{j + p^{n-1}}$ for $0 \le j < p^n - p^{n-1}$
by $F_1$-invariance. One easily checks that the above map induces the following isomorphisms of $W[F']$-modules:
\begin{equation} \label{eqn:fix_Ji_Si}
	\JJ_i^{F_1} \cong \JJ_{i-1}, \qquad \SS_i^{F_1} \cong \SS_{i-1}.
\end{equation}
For any $1 \le i \le p^n$ write $J_i$ for the unique indecomposable $k[F]$-module of dimension $i$ over $k$. Note that $J_i = (\sigma - 1)^{p^n - i} \cdot k[F]$.
\begin{Lemma} \label{lem:JJ_otimes_k}
	We have $\JJ_i \otimes_W k \cong J_{p^n - p^{n-i}}$ and $\SS_i \otimes_W k \cong J_{p^n - p^{n-i} + 1}$.
\end{Lemma}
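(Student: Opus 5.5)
Since $\JJ_i$ and $\SS_i$ are free $W$-submodules of $W[F]$ that are saturated (the quotients $W[F]/\JJ_i \cong W[F/F_i]$ and $W[F]/\SS_i \cong W[F/F_i]/W$ are $W$-free), tensoring with $k$ identifies $\JJ_i \otimes_W k$ and $\SS_i \otimes_W k$ with the corresponding kernels in $k[F]$. Concretely, $\JJ_i \otimes_W k \cong \ker(k[F] \to k[F/F_i])$ and $\SS_i \otimes_W k \cong \ker(k[F] \to k[F/F_i]/k \cdot N_{F/F_i})$. Here we use that the images of $N_{F/F_i}\cdot W$ and of $N_{F/F_i}\cdot k$ agree, and that $W \cdot N_{F/F_i}$ is saturated in $W[F/F_i]$, since $N_{F/F_i}$ has coefficients equal to $1$. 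So the plan is to identify these kernels inside $k[F] \cong k[x]/(x^{p^n})$, where $x := \sigma - 1$.

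In $k[F]$ we have $\sigma_i - 1 = \sigma^{p^{n-i}} - 1 = x^{p^{n-i}}$. The ideal $\ker(k[F] \to k[F/F_i])$ is generated by $\sigma_i - 1$, so it equals $x^{p^{n-i}} k[F]$, which has dimension $p^n - p^{n-i}$. It is the unique submodule of that dimension, namely $J_{p^n - p^{n-i}} = (\sigma-1)^{p^{n-i}} k[F]$. This gives the first claim.

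For $\SS_i$, the preimage of the line $k \cdot N_{F/F_i}$ under $k[F] \to k[F/F_i]$ is $x^{p^{n-i}} k[F] + k \cdot y$ for any lift $y$ of $N_{F/F_i}$. Let $\bar x$ be the image of $x$ in $k[F/F_i] \cong k[\bar x]/(\bar x^{p^{n-i}})$. Then $N_{F/F_i} = \bar x^{p^{n-i}-1}$, so we may take $y = x^{p^{n-i}-1}$. The kernel is therefore $x^{p^{n-i}-1} k[F]$, because $x^{p^{n-i}-1}k[F] = k\,x^{p^{n-i}-1} + x^{p^{n-i}}k[F]$. This is the cyclic module $J_{p^n - p^{n-i}+1}$.

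Everything here is routine. The only point needing care is the saturation argument, which justifies that $(-)\otimes_W k$ of the kernel is the kernel of the reduced map. This follows because the cokernels are $W$-free, so $\Tor_1^W(\text{cokernel}, k) = 0$.
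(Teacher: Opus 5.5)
Your proof is correct and follows essentially the same route as the paper: you identify the reductions with the kernels of $k[F] \to k[F/F_i]$ and $k[F] \to k[F/F_i]/k$, and read these off in $k[F] \cong k[x]/(x^{p^n})$ as $(\sigma-1)^{p^{n-i}}k[F]$ and $(\sigma-1)^{p^{n-i}-1}k[F]$. You also make explicit two points the paper leaves implicit: the vanishing of $\Tor_1^W$ for the $W$-free cokernels, which lets reduction commute with taking these kernels, and the identity $N_{F/F_i} = \bar{x}^{p^{n-i}-1}$ in characteristic $p$, where the paper instead appeals directly to the uniserial structure of $k[F]$.
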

\begin{proof}
	Clearly:
	\[
	\JJ_i \otimes_W k \cong (\sigma_i - 1) k[F] \cong (\sigma - 1)^{p^{n-i}} k[F] = J_{p^n - p^{n-i}}.
	\]
	Similarly:
	\begin{align*}
		\SS_i \otimes_W k &\cong \ker(k[F] \to k[F/F_i]/k)\\
		&= \ker(J_{p^n} \to J_{p^{n-i} - 1}) \cong J_{p^n - p^{n-i} + 1}. \qedhere
	\end{align*}
\end{proof}
\begin{Proposition} \label{prop:Delta_J_otimes_k}
	Fix $1 \le i \le n$.\\
(1) We have $H^2(F_t, \JJ_i) = 0$ for all $1 \le t \le n$ and $\Delta(\JJ_i \otimes k) \cong \Delta(\JJ_i) \otimes k$.\\
(2) We have $\Delta(\SS_i) \otimes k = \Delta(\SS_i \otimes k)^{(i)}$. Moreover
\[
	H^2(F_t, \SS_i)[p]
	\cong
	\begin{cases}
		0, & t \le i,\\
		k, & t > i.
	\end{cases}
\]
\end{Proposition}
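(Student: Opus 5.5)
Both parts rest on Lemma~\ref{lem:Tor_ses} applied to $F_t$ in place of $F$. It gives the short exact sequence
\[
0 \to H^1(F_t, M) \otimes_W k \to H^1(F_t, M \otimes_W k) \to H^2(F_t, M)[p] \to 0,
\]
which is natural in $M$ and compatible with restriction and corestriction. Hence it produces a morphism of Yakovlev diagrams $\Delta(M) \otimes k \to \Delta(M \otimes k)$ that is injective in each degree, with cokernel $(H^2(F_t, M)[p])_t$. So each part reduces to computing $H^2(F_t,\cdot)$ and identifying the image.

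For (1), we use \eqref{eqn:Ji_is_induced}, namely $\JJ_i \cong \Ind^F_{F_i} I_{F_i}$, together with Lemma~\ref{lem:induced_YD} for the case $i=2$. This reduces the vanishing to $H^2(F_t, I_{F_i}) = 0$ for $t \le i$, and for $t > i$ to $H^2(F_i, I_{F_i}) = 0$. Now $I_{F_i}$ sits in $0 \to I_{F_i} \to W[F_i] \to W \to 0$. The long exact sequence, with $H^j(F_t, W[F_i])=0$ for $j \ge 1$ since it is free over $W[F_t]$, gives $H^2(F_t, I_{F_i}) \cong H^1(F_t, W) = \mathrm{Hom}(F_t, W) = 0$. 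The sequence above then shows that $\Delta(\JJ_i)\otimes k \to \Delta(\JJ_i \otimes k)$ is an isomorphism.

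For (2), we compute $H^2(F_t,\SS_i)$ from \eqref{eqn:ses_for_SS}, using (1). The long exact sequence gives
\[
H^1(F_t,\JJ_i) \to H^1(F_t,\SS_i) \to H^1(F_t,W)=0 \to H^2(F_t,\JJ_i)=0 \to H^2(F_t,\SS_i) \to H^2(F_t,W)=W_t \to H^3(F_t,\JJ_i) \cong H^1(F_t,\JJ_i).
\]
The last map is the connecting map; under periodicity it is $a \mapsto N_{F/F_{\msM(t)}}\, a$, the map described at the end of the proof of Proposition~\ref{prop:YD_Ji_Si}(2). For $t \le i$ this map is $W_t \to W_t[F/F_i]$, $a\mapsto aN$, which is injective, so $H^2(F_t, \SS_i)=0$. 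For $t>i$ it is $W_t \to W_i[F/F_t]$, whose kernel is $p^iW_t \cong W_{t-i}$. Then $H^2(F_t,\SS_i)[p] \cong W_{t-i}[p] \cong k$. Some care is needed to confirm that the connecting map really coincides with multiplication by the norm under the periodicity isomorphisms. Alternatively, one can compute $H^2(F_t,\SS_i)=\SS_i^{F_t}/N_{F_t}\SS_i$ directly from the explicit description of $\SS_i$ via the $S_{r,i}$. I would do this direct check as the safer route.

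Finally, we identify the image of $\Delta(\SS_i)\otimes k$ inside $\mc D := \Delta(\SS_i \otimes k)$ with $\mc D^{(i)}$. For $t \le i$ the two agree, since the $H^2$ term vanishes. For $t > i$, Corollary~\ref{cor:betas_injective}(2) says that $\alpha_{t}$ on $\Delta(\SS_i)$ is surjective for $t\ge i$, so the image of $\Delta(\SS_i)_t\otimes k$ equals $\alpha_{i,t}$ applied to degree $i$. Naturality of the embedding then identifies this image with $\im(\alpha_{i,t}:\mc D_i\to\mc D_t) = \mc D^{(i)}_t$. The main obstacle is the $H^2$ computation for $t>i$, specifically pinning down the connecting map, or equivalently the direct norm-quotient computation.
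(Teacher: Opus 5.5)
Your proposal is correct and takes essentially the same route as the paper. You get $H^2(F_t,\JJ_i)=0$ from $\JJ_i\cong\Ind^F_{F_i}I_{F_i}$, Lemma~\ref{lem:induced_YD} and the augmentation sequence; you then run the long exact sequence of \eqref{eqn:ses_for_SS} with the connecting map $a\mapsto aN_{F/F_{\msM(t)}}$; and you use Lemma~\ref{lem:Tor_ses} with the surjectivity of $\alpha_t$ for $t\ge i$ from Corollary~\ref{cor:betas_injective} to identify the image with $\Delta(\SS_i\otimes k)^{(i)}$. The paper identifies the connecting map exactly as you do, by pointing to the proof of Proposition~\ref{prop:YD_Ji_Si}, so your fallback computation of $\SS_i^{F_t}/N_{F_t}\SS_i$ is a valid cross-check but not needed.
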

\begin{proof}
	(1) By applying the long exact sequence of group cohomology to the short exact sequence:
	\[
		0 \to I_F \to W[F] \to W \to 0,
	\]
	one obtains $H^2(F_t, I_F) \cong H^1(F_t, W) \cong 0$ for all $0 \le t \le n$.
	Thus by~\eqref{eqn:Ji_is_induced} and Lemma~\ref{lem:induced_YD} we have
	$H^2(F_t, \JJ_i) = 0$ for all $0 \le i, t \le n$. Therefore the result follows by Lemma~\ref{lem:Tor_ses}.\\
	(2) By applying the long
	exact sequence of cohomology to~\eqref{eqn:ses_for_SS} and noting that $H^2(F_t, \JJ_i) = 0$ we obtain:
	\[
		H^2(F_t, \SS_i) = \ker(H^2(F_t, W) \to H^3(F_t, \JJ_i) \cong H^1(F_t, \JJ_i)).
	\]
	We have $H^2(F_t, W) \cong W_t$ and the map $W_t \to H^1(F_t, \JJ_i)$ is
	given by $a \mapsto [a \cdot N_{F/F_{\msM(t)}}]$ (cf. proof of Proposition~\ref{prop:YD_Ji_Si}). One easily checks that:
	\[
		\ker(N_{F/F_{\msM(t)}} : W_t \to W_{\msm(t)}[F/F_{\msM(t)}]) = 
		\begin{cases}
			0, & t \le i,\\
			p^i W_t\cong W_{t - i}, & t > i.
		\end{cases}
	\]
	For any $i > t$ consider the commutative diagram with exact rows (obtained from Lemma~\ref{lem:Tor_ses}):
	\begin{center}
		\begin{tikzcd}
			0 \arrow[r] & \Delta(\SS_i)_i \otimes k \arrow[d] \arrow[r] & \Delta(\SS_i \otimes k)_i \arrow[d] \arrow[r] & 0 \arrow[d] \arrow[r]          & 0 \\
			0 \arrow[r] & \Delta(\SS_i)_t \otimes k \arrow[r]           & \Delta(\SS_i \otimes k)_t \arrow[r]           & {H^2(F_t, \SS_i)[p]} \arrow[r] & 0
		\end{tikzcd}
	\end{center}
	Since by Corollary~\ref{cor:betas_injective}~(2) the first vertical arrow is surjective, $\Delta(\SS_i)_t \otimes k \cong \im(\Delta(\SS_i \otimes k)_i \to \Delta(\SS_i \otimes k)_t)$.
\end{proof}
\begin{Proposition} \label{prop:lifting_F_automorphisms}
	Assume that $\DD = \Delta(M)$ for $M$ of the form
	\begin{equation*}
		M := W[F]^a \oplus \bigoplus_{\nu = 1}^w \JJ_{b_{\nu}} \oplus \SS_m^b,
	\end{equation*}
	 with $m \ge \max(b_1, \ldots, b_w)$. Let $\mc D$ be an $F$-Yakovlev diagram over~$W$ and assume that there exists an isomorphism $\varphi : \DD \otimes k \to \mc D \otimes k$ of $F$-Yakovlev diagrams. Then $\varphi$	can be lifted to a morphism $\varphi' : \DD \to \mc D$ of $F$-Yakovlev diagrams.
\end{Proposition}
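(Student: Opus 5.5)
The plan is to show that $\DD$ is, summand by summand, ``freely generated'' by explicit elements. Then a morphism out of $\DD$ amounts to a choice of images of these generators subject to very few relations, and lifting $\varphi$ reduces to lifting these images from $\mc D\otimes k$ to $\mc D$ while preserving the relations. Since $\Delta$ is additive and $\Delta(W[F]) = 0$ (the cohomology of a free module vanishes), we have $\DD \cong \bigoplus_\nu \Delta(\JJ_{b_\nu}) \oplus \Delta(\SS_m)^b$. A morphism out of a direct sum is a tuple of morphisms out of the summands, so it suffices to lift $\varphi|_{\Delta(\JJ_{b_\nu})\otimes k}$ and $\varphi|_{\Delta(\SS_m)\otimes k}$ separately. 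We keep in mind, however, that the hypothesis that $\varphi$ is an isomorphism is a global one.

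\emph{Step 1: the $\JJ_b$ summands.} By Proposition~\ref{prop:YD_Ji_Si}, $\Delta(\JJ_b)_b = W_b[F/F_b]$ is a cyclic $W[F]$-module generated by $e := 1$. Every other term is obtained from $e$ by the maps $\beta_{t,b}$ (for $t<b$) or $\alpha_{b,t}$ (for $t>b$), and each term is free over $W_{\msm(t)}[F/F_{\msM(t)}]$. I will check that for any Yakovlev diagram $\mc E$ and any $y\in \mc E_b$ there is a unique morphism $\Delta(\JJ_b)\to\mc E$ sending $e\mapsto y$. On level $t<b$ it is given by $x\mapsto \beta_{t,b}(\tilde x y)$, and on level $t>b$ by $1\mapsto \alpha_{b,t}(y)$. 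Well-definedness only requires that $p^{\msm(t)}$ kills the image and that $F_{\msM(t)}$ acts trivially, and both hold automatically. Compatibility with the transition maps follows from~\eqref{eqn:alpha_beta_and_beta_alpha}; for instance, $\rho_t(1)=N_{F_{t+1}/F_t}$ corresponds to $\beta_t\alpha_t$. Thus $\operatorname{Hom}(\Delta(\JJ_b),\mc E)\cong\mc E_b$. The same presentation reduces modulo $p$, using $\Delta(\JJ_b)\otimes k$ and Proposition~\ref{prop:Delta_J_otimes_k}(1), so $\operatorname{Hom}(\Delta(\JJ_b)\otimes k,\mc D\otimes k)\cong \mc D_b\otimes k$. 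Hence I will lift $\varphi(\bar e)$ to any $y\in\mc D_b$ to obtain the required lift on this summand.

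\emph{Step 2: the $\SS_m$ summands.} Here $\Delta(\SS_m)$ is the quotient of $\Delta(\JJ_m)$ by the norm elements, so a morphism to $\mc E$ corresponds to $y\in\mc E_m$ satisfying $N_{F/F_t}\alpha_{m,t}(y)=0$ for all $t\ge m$ (for $t<m$ the relation follows from $N_{F/F_m}y=0$ by applying $\beta$). The condition at level $t+1$ implies the one at level $t$, because $\beta_t(N_{F/F_{t+1}}\alpha_t z)=N_{F/F_t}z$. Since $F/F_n$ is trivial, all these conditions therefore reduce to the single condition $\alpha_{m,n}(y)=0$ in $\mc E_n$. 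So $\operatorname{Hom}(\Delta(\SS_m),\mc D)\cong\ker(\alpha_{m,n}\colon\mc D_m\to\mc D_n)$, and similarly modulo $p$ with $\ker(\alpha_{m,n}\otimes k)$.

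\emph{Step 3: the lifting problem for $\SS_m$, which I expect to be the main obstacle.} Given $\bar y\in\mc D_m\otimes k$ with $(\alpha_{m,n}\otimes k)(\bar y)=0$, we need a lift $y$ with $\alpha_{m,n}(y)=0$ exactly. This is where the hypotheses that $\varphi$ is an isomorphism and that $m\ge\max b_\nu$ enter. By Corollary~\ref{cor:betas_injective}(2), applied to each summand (with $t\ge m\ge b_\nu$), the maps $\alpha_t\otimes k$ on $\DD\otimes k$ are surjective for $t\ge m$. Transporting via $\varphi$, the same holds for $\mc D\otimes k$, and Nakayama's lemma then gives surjectivity of $\alpha_{m,n}\colon\mc D_m\to\mc D_n$. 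Now take any lift $y_0$ of $\bar y$. Then $z:=\alpha_{m,n}(y_0)\in p\,\mc D_n=\alpha_{m,n}(p\,\mc D_m)$, so $z=\alpha_{m,n}(pu)$ for some $u$, and $y:=y_0-pu$ has the same reduction and satisfies $\alpha_{m,n}(y)=0$.

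Finally, I will assemble the lifts of all summands into $\varphi'\colon\DD\to\mc D$. Since each summand's morphism is determined by the image of its generator, the reduction of $\varphi'$ agrees with $\varphi$ on every generator, hence $\varphi'\otimes k=\varphi$.
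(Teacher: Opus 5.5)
Your proposal is correct and follows essentially the same route as the paper. Both arguments lift the image of the generator of each summand, and for the $\SS_m$ summands both correct the lift by $p\cdot u$, using the surjectivity of $\alpha_{m,n}$ on $\mc D$ (obtained from Corollary~\ref{cor:betas_injective}(2), the isomorphism $\varphi$, and Nakayama). Both then extend to all levels via $\beta_{t,b}$ and $\alpha_{b,t}$.

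Your description of morphisms out of these summands ($\mathrm{Hom}(\Delta(\JJ_b),\mc E)\cong\mc E_b$ and $\mathrm{Hom}(\Delta(\SS_m),\mc E)\cong\ker\alpha_{m,n}$) spells out the verifications the paper leaves as ``one easily checks''. This includes the reduction of the norm conditions at all levels to the single condition $\alpha_{m,n}(y)=0$.
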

\begin{proof}
	Write $\DD^{\nu} := \Delta(\JJ_{b_{\nu}})$ for $1 \le \nu \le w$
	and $\DD^{\nu} := \Delta(\SS_m)$ for $w < \nu \le w + b$. Let $b_{\nu} := m$ for
	$w < \nu \le w + b$. Also, let $i_{t, \nu}(\sigma)$ be the image of $\sigma$ in
	$\DD^{\nu}_t$. Clearly, $\DD \cong \bigoplus_{\nu = 1}^{w+b} \DD^{\nu}$.
	For $1 \le \nu \le w$, let $\delta_{\nu} \in \mc D_{b_{\nu}}$ be an arbitrary lift of
	$\varphi_{b_{\nu}}(i_{b_{\nu}, \nu}(\sigma)) \in \mc D_{b_{\nu}} \otimes k$.
	Clearly, since $\mc D_{b_{\nu}}$ is a $W_{b_{\nu}}[F/F_{b_{\nu}}]$-module,
	we can define $\varphi_{b_{\nu}}'|_{\DD^{\nu}_{b_{\nu}}}$ by sending $i_{b_{\nu}, \nu}(\sigma) \mapsto \delta_{\nu}$.\\
	Assume now that $w < \nu \le w + b$. Let $\delta_{\nu}' \in \mc D_m$ be an arbitrary lift of
	$\varphi_m(i_{m, \nu}(\sigma)) \in \mc D_m \otimes k$.
	Then in $\mc D_n \otimes k$:
	\begin{align*}
		(\alpha_{\mc D, m, n} \otimes k)(\delta_{\nu}') &= 
		(\alpha_{\mc D, m, n} \otimes k)(\varphi_m(i_{m, \nu}(\sigma)))\\
		&= \varphi_n((\alpha_{\DD, m, n} \otimes k)(i_{m, \nu}(\sigma)))
		= 0.
	\end{align*}
	Therefore we have $\alpha_{\mc D, m, n}(\delta_{\nu}') = p \cdot z_{\nu}$ for 
	certain $z_{\nu} \in \mc D_n$. Since $\alpha_{\DD, t}$ is surjective for all $t \ge m$ by Corollary~\ref{cor:betas_injective} and $\DD \otimes k \cong \mc D \otimes k$, $\alpha_{\mc D, t}$ must be surjective for all $t \ge m$ by Nakayama's lemma. Thus there exists
	$u_{\nu} \in \mc D_m$ such that $\alpha_{\mc D, m, n}(u_{\nu}) = z_{\nu}$.
	Let $\delta_{\nu} := \delta_{\nu}' - p \cdot u_{\nu}$. Then
	$\alpha_{\mc D, m, n}(\delta_{\nu}) = 0$ and hence $N_{F/F_m}(\delta_{\nu}) = \beta_{\mc D, m, n}(\alpha_{\mc D, m, n}(\delta_{\nu})) = 0$. Therefore we can again define $\varphi_{b_{\nu}}'|_{\DD^{\nu}_m}$ by sending $i_{b_{\nu}, \nu}(\sigma) \mapsto \delta_{\nu}$.\\
	Finally, we put for all $1 \le \nu \le w+b$ and $1 \le t \le n$:
	\[
	\varphi_t'(i_{t, \nu}(\sigma)) :=
	\begin{cases}
		\beta_{\mc D, t, b_{\nu}}(\delta_{\nu}), & t \le b_{\nu},\\
		\alpha_{\mc D, b_{\nu}, t}(\delta_{\nu}), & t > b_{\nu}.
	\end{cases}
	\]
	One easily checks that this defines an $F$-equivariant map $\varphi_t'|_{\DD^{\nu}_t}$ for every $1 \le t \le n$ and $1 \le \nu \le w+b$.
	Finally, for every $1 \le \nu \le w+b$ and $1 \le t < b_{\nu}$:
	\begin{align*}
		\alpha_{\mc D, t}(\varphi'_t(i_{t, \nu}(\sigma))) &= 
		\alpha_{\mc D, t}(\beta_{\mc D, t, b_{\nu}}(\delta_{\nu}))
		= p \cdot \beta_{\mc D, t+1, b_{\nu}}(\delta_{\nu})\\
		&= \varphi'_{t+1}(p \cdot i_{t+1, \nu}(\sigma))
		= \varphi'_{t+1}(\alpha_{\DD, t}(i_{t, \nu}(\sigma))),\\
		\beta_{\mc D, t}(\varphi'_{t+1}(i_{t+1, \nu}(\sigma))) &= 
		\beta_{\mc D, t}(\beta_{\mc D, t+1, b_{\nu}}(\delta_{\nu}))\\
		&= \beta_{\mc D, t, b_{\nu}}(\delta_{\nu})
		= \varphi'_t(\beta_{\DD, t}(i_{t+1, \nu}(\sigma))).
	\end{align*}
	The proof that the constructed map commutes with $\alpha_t$ and $\beta_t$
	for $t \ge b_{\nu}$ is analogous.
\end{proof}
\begin{Corollary} \label{cor:G-YD_are_iso}
	Let $\DD$ be as in Proposition~\ref{prop:lifting_F_automorphisms}. Let $G$ be a finite group with a normal $p$-Sylow subgroup $F$.	Assume that $\DD'$, $\DD''$ are two $G$-Yakovlev diagrams that are isomorphic to $\DD$ as $F$-Yakovlev diagrams and such that $\DD' \otimes k \cong \DD'' \otimes k$ as $G$-Yakovlev diagrams. Then $\DD' \cong \DD''$ as $G$-Yakovlev diagrams.
\end{Corollary}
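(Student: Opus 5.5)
The plan is to lift the given isomorphism $\psi : \DD' \otimes k \to \DD'' \otimes k$ of $G$-Yakovlev diagrams to a $G$-equivariant isomorphism $\DD' \to \DD''$. I would do this in three steps: lift $\psi$ to an $F$-morphism using Proposition~\ref{prop:lifting_F_automorphisms}, make the lift $G$-equivariant by averaging over $G/F$, and conclude invertibility by Nakayama's lemma.

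First I fix $F$-isomorphisms $\DD' \cong \DD$ and $\DD'' \cong \DD$. Regarded as an isomorphism of $F$-Yakovlev diagrams, $\psi$ becomes an isomorphism $\DD \otimes k \to \DD'' \otimes k$. Proposition~\ref{prop:lifting_F_automorphisms}, applied with $\mc D := \DD''$, then gives a morphism $\varphi' : \DD' \to \DD''$ of $F$-Yakovlev diagrams with $\varphi' \otimes k = \psi$.

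Next I make $\varphi'$ equivariant. Since $F$ is normal, we have $N_t = G$ for every $t$, so every $\mc D_t$ is a $W[G]$-module and all transition maps are $G$-equivariant. Let $g_1, \ldots, g_r$ be representatives of $G/F$; note that $r = [G:F]$ is prime to $p$ and hence a unit in $W$. Define
\[
\Phi_t(x) := \frac{1}{r} \sum_{j=1}^r g_j^{-1} \cdot \varphi'_t(g_j \cdot x).
\]
Because $\varphi'_t$ is $F$-equivariant and $F$ is normal, this expression does not depend on the choice of representatives, and it is $G$-equivariant: the usual averaging argument applies, since for $g \in G$ the cosets $g_j g$ run over $G/F$ again. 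Each summand commutes with $\alpha_t$ and $\beta_t$, because those maps are $G$-equivariant, so $\Phi = (\Phi_t)$ is a morphism of $G$-Yakovlev diagrams. Modulo $p$, each summand reduces to $g_j^{-1} \psi g_j = \psi$, since $\psi$ is $G$-equivariant. Hence $\Phi \otimes k = \psi$.

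Finally, each $\Phi_t : \DD'_t \to \DD''_t$ is a map of finite-length $W$-modules whose reduction mod $p$ is an isomorphism. Nakayama's lemma gives that $\Phi_t$ is surjective. Moreover $\DD'_t \cong \DD_t \cong \DD''_t$ as $W$-modules, so the two modules have equal length, and a surjection between them is an isomorphism. Therefore $\Phi$ is an isomorphism of $G$-Yakovlev diagrams.

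The main obstacle is the first step. Proposition~\ref{prop:lifting_F_automorphisms} is stated for the diagram $\DD = \Delta(M)$ itself, so I need to transport through the fixed $F$-isomorphisms carefully and check that the resulting lift really reduces to $\psi$, not to some conjugate of it. Once that is settled, the averaging step and the Nakayama argument are routine.
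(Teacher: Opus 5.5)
Your proposal is correct and matches the paper's proof step for step: lift the mod-$p$ $G$-isomorphism to an $F$-morphism via Proposition~\ref{prop:lifting_F_automorphisms}, average over representatives of $G/F$ using $p \nmid [G:F]$, and conclude with Nakayama's lemma plus equality of $W$-lengths. The transport issue you flag is harmless: for an $F$-isomorphism $\theta : \DD \to \DD'$, lift $\psi \circ (\theta \otimes k)$ to $\varphi'_0 : \DD \to \DD''$ and set $\varphi' := \varphi'_0 \circ \theta^{-1}$, which reduces to exactly $\psi$.
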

\begin{proof}
	Let $\varphi : \DD' \otimes k \to \DD'' \otimes k$ be an isomorphism of $G$-Yakovlev diagrams. By Proposition~\ref{prop:lifting_F_automorphisms} we can lift it to an $F$-isomorphism $\varphi' : \DD' \to \DD''$. Define $\varphi'' : \DD' \to \DD''$ by the following formula:
	\[
	\varphi''_t(x) := \frac 1M \sum_{i = 1}^M x_i \cdot \varphi'_t(x_i^{-1} \cdot x),
	\]
	where $G/F = \{ x_1 F, \ldots, x_M F \}$ (note that $p \nmid M$). Clearly, $\varphi''$ is $G$-equivariant.
	Moreover, since the maps $\alpha_t$, $\beta_t$ are $G$-equivariant,
	the maps $\varphi''_t$ commute with them. Finally:
	\begin{align*}
		(\varphi''_t \otimes k)(x) &= \frac 1M \sum_{i = 1}^M x_i \cdot (\varphi'_t \otimes k)(x_i^{-1} \cdot x)\\
		&= \frac 1M \sum_{i = 1}^M x_i \cdot \varphi_t(x_i^{-1} \cdot x) = \varphi_t(x).
	\end{align*}
	Therefore, $\varphi''_t$ is a surjection by Nakayama's lemma.
	This easily implies that $\len_W(\ker \varphi''_t) = \len_W \DD'_t - \len_W(\im \varphi''_t) = 0$. Hence $\varphi''_t$ is an isomorphism. This completes the proof. 
\end{proof}

\section{Proofs of Theorems~\ref{thm:crys_cyclic} and \ref{thm:cyclic_sylow} -- preparation} \label{sec:pf_of_cyclic}
We prove Theorems~\ref{thm:crys_cyclic} and \ref{thm:cyclic_sylow} in this and the next section. This section is devoted to some auxiliary results and in Section~\ref{sec:pf_of_cyclic2} we perform the inductive step.
We start by fixing some notation. Fix $n$ and assume that $\pi : X \to Y$ is an $F$-cover, where $F := \ZZ/p^n$. We abbreviate $m := m_{X/Y}$, $B := B_{X/Y}$, $e_Q := e_{X/Y, Q}$ for any $Q \in Y(\ol k)$, etc. Define:
\begin{itemize}
\item $\mc M := H^1_{\cris}(X)$ (considered as a $W[F]$-module)
\item[] and $\mc D = (\mc D_t, \alpha_t, \beta_t) := \Delta(\mc M)$,
	
\item the $W[F]$-module:
\begin{align*}
	\MM &:= W[F]^{2 (g_Y - 1)} \oplus \SS_m^2 \oplus \bigoplus_{\substack{Q \in B\\ Q \neq Q_0}} \JJ_{m_Q}^2
	\oplus \bigoplus_{Q \in B} \bigoplus_{t = 0}^{m_Q  - 1} \JJ_{m_Q - t}^{u_Q^{(t+1)} - u_Q^{(t)}}.
\end{align*}
and the Yakovlev diagram $\DD = (\DD_t, \alpha_{\DD, t}, \beta_{\DD, t}) := \Delta(\MM)$.
\end{itemize}
Our goal is to prove that $\mc M \cong \MM$.
Without loss of generality, we may assume that $B \subset Y(k)$. Indeed,
suppose that $B \subset Y(k')$, where $k'/k$ is a finite extension. Let $W' := W(k')$.
Then for any two $W[F]$-modules $M_1$, $M_2$ that are finite and free as $W$-modules one has
$M_1 \cong M_2$ as $W[F]$-modules if and only if $M_1 \otimes_W W' \cong M_2 \otimes_W W'$
as $W'[F]$-modules by the Reiner--Zassenhaus theorem (cf. \cite[Theorem~(30.25)]{Curtis-Reiner}).
Hence, if we prove that $\mc M \otimes_W W' \cong \MM \otimes_W W'$ as $W'[F]$-modules, then $\mc M \cong \MM$. Therefore, we can replace $k$ by $k'$.
\subsection{$\ZZ/p$-covers} \label{subsec:Z/p-covers}
Assume first that $\pi$ is a $\ZZ/p$-cover. By \cite[Theorem~4.1]{Garnek_Kontogeorgis_cyclic_de_Rham}
we have
\[
	H^1_{dR}(X) \cong 
	\begin{cases}
		k[F]^{2 g_Y} \oplus J_{p-1}^{\alpha}, & \textrm{ if $\pi$ is non-\'{e}tale,}\\
		k[F]^{2 (g_Y - 1)} \oplus k^{\oplus 2}, & \textrm{ if $\pi$ is \'{e}tale,}
	\end{cases}	
\]
where $\alpha = \sum_{Q \in B}(u_Q^{(1)} + 1) - 2$. Therefore by Corollary~\ref{cor:Zp_modules} we must have:
\[
H^1_{\cris}(X) \cong
\begin{cases}
	W[F]^{2 g_Y} \oplus \JJ_1^{\alpha}, & \textrm{ if $\pi$ is non-\'{e}tale,}\\
	W[F]^{2 (g_Y - 1)} \oplus W^{\oplus 2}, & \textrm{ if $\pi$ is \'{e}tale,}
\end{cases}	
\]
which completes the proof of Theorem~\ref{thm:crys_cyclic} in this case.
\subsection{$F$-invariants}
Let $\phi : X_1 \to X_2$ be a non-constant separable morphism of smooth projective curves over $k$. Recall that in this case there exist maps $\phi^* : H^1_{\cris}(X_2) \to H^1_{\cris}(X_1)$ and $\tr_{X_1/X_2} : H^1_{\cris}(X_1) \to H^1_{\cris}(X_2)$ (often denoted also $\phi_*$) such that
$\tr_{X_1/X_2} \circ \phi^* = (\deg \phi) \cdot \id$. In particular, since $H^1_{\cris}(X_2)$ is
torsion-free, $\phi^*$ is injective. 
Suppose now that $\phi$ is an $H$-cover. In this case $N_H : H^1_{\cris}(X_1) \to H^1_{\cris}(X_1)$ factors through $\tr_{X_1/X_2}$. In other words we have the following commutative diagram:
\begin{center}
	\begin{tikzcd}
		H^1_{\cris}(X_1) \arrow[rr, "N_H"] \arrow[rd, "\tr_{X_1/X_2}"] &                                               & H^1_{\cris}(X_1). \\
		& H^1_{\cris}(X_2) \arrow[ru, "\phi^*", hook] &              
	\end{tikzcd}
\end{center}
Clearly, the image of $H^1_{\cris}(X_2)$ is contained in $H^1_{\cris}(X_1)^H$.
\begin{Lemma} \label{lem:H1Y_=tr}
	Let $\pi : X \to Y$ be as in Theorem~\ref{thm:crys_cyclic}. Assume also that $m_{X/Y} = n$. Then
	$\tr_{X/Y} : H^1_{\cris}(X) \to H^1_{\cris}(Y)$ is onto.
\end{Lemma}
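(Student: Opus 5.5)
Since $H^1_{\cris}(Y)$ is a finitely generated free $W$-module, the plan is to use Nakayama's lemma: it suffices to show that $\tr_{X/Y} \otimes k$ is surjective. We will identify $H^1_{\cris}(-) \otimes_W k$ with $H^1_{dR}(-)$, compatibly with pullback and trace. The base change property of crystalline cohomology of curves (torsion-free $H^1$, $H^2_{\cris}$ free) makes this identification exact. Then $\tr_{X/Y} \otimes k$ becomes the de Rham trace $H^1_{dR}(X) \to H^1_{dR}(Y)$. So it is enough to prove that the de Rham trace is surjective when $m_{X/Y} = n$, i.e. when some point $P$ is totally ramified.

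Next, we factor $\pi$ as a tower of $\ZZ/p$-covers $X = X_n \to X_{n-1} \to \cdots \to X_0 = Y$, with $X_j = X/F_{n-j}$. The trace is transitive in towers, so it suffices to treat each step $X_{j+1} \to X_j$. Each step is a $\ZZ/p$-cover that is totally ramified at the image of $P$, hence non-étale. So we are reduced to the following claim: for a non-étale $\ZZ/p$-cover $\phi : X_1 \to X_2$, the trace $H^1_{dR}(X_1) \to H^1_{dR}(X_2)$ is surjective.

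For the claim, we argue dually. Under Serre/Poincaré duality on $H^1_{dR}$, the trace is adjoint to $\phi^*$. Hence $\tr$ is surjective if and only if $\phi^* : H^1_{dR}(X_2) \to H^1_{dR}(X_1)$ is injective. An alternative route is the module structure from Subsection~\ref{subsec:Z/p-covers}: $H^1_{dR}(X_1) \cong k[F]^{2g_{X_2}} \oplus J_{p-1}^{\alpha}$, with $N_F = \phi^* \circ \tr$. Then $N_F$ kills $J_{p-1}$ and has rank $2g_{X_2}$ on $k[F]^{2g_{X_2}}$. So $\operatorname{rank}(\phi^* \circ \tr) = 2 g_{X_2} = \dim H^1_{dR}(X_2)$, which forces $\tr$ to be surjective (and $\phi^*$ injective). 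This is clean, and I would take this route. The second route avoids duality but needs $N_F$ to factor as $\phi^* \circ \tr$ in de Rham cohomology, which follows by reducing the crystalline diagram above mod $p$.

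In the étale case the formula gives $k[F]^{2(g-1)} \oplus k^2$, and $N_F$ has rank only $2g-2$. This shows why the hypothesis $m_{X/Y} = n$ is needed, and it is why the claim must be applied to each step of the tower.

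The main obstacle I expect is justifying the compatibility of the crystalline trace with the de Rham trace under reduction mod $p$. Equivalently, one needs that $N_F$ factors through $\tr$ after tensoring with $k$. This follows by reducing the displayed commutative diagram mod $p$, together with the identification $H^1_{\cris}(X) \otimes k \cong H^1_{dR}(X)$, which is functorial in $X$ and compatible with $\phi^*$. With that in hand, the rank computation finishes the argument, and Nakayama lifts surjectivity back to $W$.
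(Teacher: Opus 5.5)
Your proof is correct. Its skeleton (reduce mod $p$, then apply Nakayama) is the paper's, but you obtain the de Rham input differently.

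\textbf{The paper's route.} It cites \cite[Lemma~4.5]{Garnek_Kontogeorgis_cyclic_de_Rham} for the surjectivity of the de Rham trace $H^1_{dR}(X)\to H^1_{dR}(Y)$ when $m_{X/Y}=n$, and then applies Nakayama. This tacitly uses that $\tr_{X/Y}\otimes k$ agrees with the de Rham trace under $H^1_{\cris}(-)\otimes_W k\cong H^1_{dR}(-)$.

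\textbf{Your route.} You derive surjectivity from the $k[F]$-module structure already quoted in Subsection~\ref{subsec:Z/p-covers}, by a rank count on $N_F=\phi^*\circ\tr$. Since $N_F$ has rank $2g_{X_2}=\dim H^1_{dR}(X_2)$, the trace is forced to be onto.

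\textbf{What your route buys.} You half-noticed this in your last paragraph: you can run the rank count directly on $\tr_{\cris}\otimes k$, by reducing the crystalline identity $N_F=\phi^*\circ\tr$ mod $p$. Then you only need the $F$-equivariance of $H^1_{\cris}(X)\otimes_W k\cong H^1_{dR}(X)$. The compatibility of the crystalline and de Rham traces, which you flagged as the main obstacle, is never needed.

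\textbf{The tower is dispensable.} With $m=n$, \cite[Theorem~4.1]{Garnek_Kontogeorgis_cyclic_de_Rham} gives
$H^1_{dR}(X)\cong k[F]^{2g_Y}\oplus(\text{a sum of modules } J_i \text{ with } i<p^n)$,
because the summand $J_{p^n-p^{n-m}+1}^2$ becomes $k[F]^2$. Moreover $N_F=(\sigma-1)^{p^n-1}$ kills every $J_i$ with $i<p^n$. So the same rank count works in one step for the whole cover.

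\textbf{Comparison.} The paper's argument is shorter but leans on a more specialized lemma from the de Rham paper, plus the trace compatibility. Yours uses only the module structure and the norm factorization, so it is more self-contained.
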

\begin{proof}
	By \cite[Lemma~4.5]{Garnek_Kontogeorgis_cyclic_de_Rham} the trace map $\tr_{X/Y} : H^1_{dR}(X) \to H^1_{dR}(Y)$
	is onto. Thus by Nakayama's lemma, $\tr_{X/Y} : H^1_{\cris}(X) \to H^1_{\cris}(Y)$ is an epimorphism as well.
\end{proof}
\begin{Lemma} \label{lem:invariants}
	Let $\pi : X \to Y$ be as in Theorem~\ref{thm:crys_cyclic}. Then $\pi^*$ induces an isomorphism $H^1_{\cris}(Y) \cong H^1_{\cris}(X)^F$.
\end{Lemma}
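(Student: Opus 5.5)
The plan is to compare $\pi^*(H^1_{\cris}(Y))$ with $H^1_{\cris}(X)^F$ first rationally and then integrally. Injectivity of $\pi^*$ was already noted, and clearly $\pi^*(H^1_{\cris}(Y)) \subset H^1_{\cris}(X)^F$, so only surjectivity onto the invariants needs proof. I would split it into a rational statement and a saturation statement.

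\emph{Rational step.} Take $x \in H^1_{\cris}(X)^F$. Since $F$ fixes $x$, we have $N_F x = p^n x$. By the factorization $N_F = \pi^* \circ \tr_{X/Y}$ this gives $p^n x = \pi^*(\tr_{X/Y} x)$. Hence $x = \pi^*(p^{-n}\tr_{X/Y} x)$ in $H^1_{\cris}(X/K)$, and so $H^1_{\cris}(X)^F \subset \pi^*(H^1_{\cris}(Y)) \otimes_W K$. It therefore suffices to show that the image of $\pi^*$ is saturated in $H^1_{\cris}(X)$. Concretely, if $p y' = \pi^*(y)$ lies in the image of $\pi^*$ with $y' \in H^1_{\cris}(X)$, we need $y \in pH^1_{\cris}(Y)$.

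\emph{Reduction to $\ZZ/p$.} Invariants compose along the tower $X \to X' := X/F_1 \to Y$: we have $H^1_{\cris}(X)^F = (H^1_{\cris}(X)^{F_1})^{F/F_1}$, and $\pi^*$ factors through $H^1_{\cris}(X')$. So by induction on $n$ it is enough to treat $\ZZ/p$-covers. For such covers the structure of $H^1_{\cris}(X)$ is known from Subsection~\ref{subsec:Z/p-covers}. In the non-\'etale case it is $W[F]^{2g_Y} \oplus \JJ_1^{\alpha}$, whose invariants are $N_F W[F]^{2g_Y}$, of rank $2g_Y$. I would show that this lattice equals $\pi^*H^1_{\cris}(Y)$ as follows. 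Since $m = n = 1$, Lemma~\ref{lem:H1Y_=tr} makes $\tr_{X/Y}$ surjective. On $N_F W[F]^{2g_Y}$ the map $\pi^* \circ \tr_{X/Y}$ restricts to multiplication by $p$, and $\JJ_1$ contributes no invariants. Together with surjectivity of the trace, this forces $\tr_{X/Y}$ to be an isomorphism from $N_F W[F]^{2g_Y}$ onto $H^1_{\cris}(Y)$, with $\pi^*$ its inverse up to $p$. That yields $\pi^*H^1_{\cris}(Y) = H^1_{\cris}(X)^F$.

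\emph{Main obstacle: the \'etale case.} Here surjectivity of the trace is not available, and $\pi^* \otimes k$ on de Rham cohomology is not injective: the Artin--Schreier class of the cover dies in $H^1(X,\mc O_X)$. So the naive argument that "$\coker \pi^*$ is torsion free because $\pi^*\otimes k$ is injective" breaks down. I would instead use the Hochschild--Serre-type sequence for the \'etale Galois cover,
\[
0 \to H^1(F, W) \to H^1_{\cris}(Y) \to H^1_{\cris}(X)^F \to H^2(F, H^0_{\cris}(X)) = W_1 ,
\]
and try to show that the last map vanishes. The idea is to identify it with the obstruction class of the cover and check that the image of $H^1_{\cris}(X)^F$ in $W_1$ is zero. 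For this I would use the explicit description $H^1_{\cris}(X) \cong W[F]^{2(g_Y-1)} \oplus W^2$ together with a comparison of $W$-lengths of $H^1_{\cris}(X)^F/\pi^*H^1_{\cris}(Y)$, computed via $\tr_{X/Y}$ and the discriminant of the cup-product pairing. This is the step I expect to require the most care.
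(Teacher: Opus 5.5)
\paragraph{The \'etale case cannot be completed.}
The step you flag as the main obstacle fails, and not for technical reasons: the statement is false for \'etale covers, so the map $H^1_{\cris}(X)^F\to H^2(F,W)\cong W_1$ cannot vanish.

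Continue your exact sequence one term further. The next map $H^2(F,W)\to H^2_{\cris}(Y)\cong W$ goes from a torsion module to a torsion-free one, so it is zero. Hence $H^1_{\cris}(X)^F\to W_1$ is onto, and $H^1_{\cris}(X)^F/\pi^*H^1_{\cris}(Y)\cong k$.

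Your own reduction already shows this. By your rational step, $H^1_{\cris}(X)^F$ is the saturation of $\pi^*H^1_{\cris}(Y)$ in $H^1_{\cris}(X)$. So equality holds if and only if $\pi^*\otimes k\colon H^1_{dR}(Y)\to H^1_{dR}(X)$ is injective, and you correctly noticed that it is not. Concretely, suppose the cover is $z_i^p-z_i=h_i$ on $U_i$ with $z_i-z_j=f_{ij}$. Then the nonzero class $(-dh_i,f_{ij})\in H^1_{dR}(Y)$ pulls back to the \v{C}ech--de Rham coboundary of $(z_i)$.

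A concrete counterexample: let $\pi\colon X\to Y$ be an \'etale $\ZZ/p$-isogeny of ordinary elliptic curves. Then $H^1_{\cris}(X)^F$ is saturated of rank $2$, hence equals $H^1_{\cris}(X)$. On the other hand, $\langle\pi^*x,\pi^*y\rangle_X=p\,\langle x,y\rangle_Y$ for the perfect cup-product pairings, which forces $[H^1_{\cris}(X):\pi^*H^1_{\cris}(Y)]=p$. So the length or discriminant comparison you propose would, if done correctly, show that the cokernel has length $1$, not $0$.

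\paragraph{Comparison with the paper and what survives.}
The paper's proof of the \'etale case has the same hole. Its chain $H^1_{\cris}(Y)\otimes_W k\subset H^1_{\cris}(X)^F\otimes_W k\subset H^1_{dR}(X)^F$ presupposes exactly the injectivity of $\pi^*\otimes k$ that you flagged as false. The cited de Rham lemma can therefore only supply a dimension count there.

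What your argument (and the paper's) does prove is the lemma when every $\ZZ/p$-step of the tower $X\to X/F_1\to\cdots\to Y$ is ramified, i.e.\ when $m_{X/Y}=n$. This covers its use in Proposition~\ref{prop:H2=0|H1=0}(1) and Corollary~\ref{cor:H2(M)}. In general, Hochschild--Serre for the \'etale cover $X/F_m\to Y$ gives $H^1_{\cris}(X)^F/\pi^*H^1_{\cris}(Y)\cong W_{n-m}$.

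In the ramified case your argument coincides with the paper's, but one sentence needs fixing. With $\mc M:=H^1_{\cris}(X)$, the map $\tr_{X/Y}$ sends $\mc M^F$ onto $p\,H^1_{\cris}(Y)$, not onto $H^1_{\cris}(Y)$; if it were onto, you would get $\pi^*H^1_{\cris}(Y)=p\,\mc M^F$. The correct chain uses surjectivity of the trace on all of $\mc M$:
$\pi^*H^1_{\cris}(Y)=\pi^*\tr_{X/Y}(\mc M)=N_F\mc M=N_F W[F]^{2g_Y}=\mc M^F$.
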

\begin{proof}
	By induction it suffices to prove the case $F = \ZZ/p$.
	Assume first that $\pi$ is an \'{e}tale cover. Then:
	\[
		H^1_{dR}(Y) \cong H^1_{\cris}(Y) \otimes_W k 
		\subset H^1_{\cris}(X)^F \otimes_W k \subset H^1_{dR}(X)^F.
	\]
	On the other hand, $H^1_{dR}(X)^F = H^1_{dR}(Y)$ by \cite[Lemma~4.3]{Garnek_Kontogeorgis_cyclic_de_Rham}.
	Hence by Nakayama's lemma $H^1_{\cris}(Y) \cong H^1_{\cris}(X)^F$.
	If $\pi$ isn't \'{e}tale then $H^1_{\cris}(X) \cong W[F]^{2g_Y} \oplus \JJ_1^{\alpha}$ (cf. Subsection~\ref{subsec:Z/p-covers}).
	Thus $H^2(F, \mc M) = 0$ by Proposition~\ref{prop:Delta_J_otimes_k}, i.e. $H^1_{\cris}(X)^F = N_F \cdot H^1_{\cris}(X)$. By Lemma~\ref{lem:H1Y_=tr} we have $N_F \cdot H^1_{\cris}(X) = H^1_{\cris}(Y)$. This finishes the proof.
\end{proof}
\begin{Proposition} \label{prop:H2=0|H1=0}
	\begin{enumerate}[(1)]
		\item[]
		\item If $m_{X/Y} = n$ then $H^2(F, \mc M) = 0$.
		
		\item If $\pi$ is \'{e}tale then $H^1(F, \mc M) = 0$.
	\end{enumerate}
\end{Proposition}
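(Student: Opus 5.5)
Both parts rest on the trace map and Lemma~\ref{lem:invariants}, which identifies $\pi^* H^1_{\cris}(Y)$ with $\mc M^F$. Recall the factorization $N_F = \pi^* \circ \tr_{X/Y}$ and the description $H^2(F, \mc M) = \mc M^F / N_F \mc M$ from~\eqref{eqn:H2_definition}.

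For (1), assume $m_{X/Y} = n$. Lemma~\ref{lem:H1Y_=tr} gives that $\tr_{X/Y}$ is surjective onto $H^1_{\cris}(Y)$. Hence
\[
N_F \mc M = \pi^*(\tr_{X/Y}(\mc M)) = \pi^* H^1_{\cris}(Y) = \mc M^F,
\]
where the last equality is Lemma~\ref{lem:invariants}. Therefore $H^2(F, \mc M) = 0$. This part is essentially formal.

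For (2), assume $\pi$ is \'etale. I would first reduce to a statement about $H^1_{dR}(X)$. Since $\mc M$ is $W$-free, Lemma~\ref{lem:Tor_ses} shows that $H^1(F, \mc M) \otimes_W k$ injects into $H^1(F, \mc M \otimes_W k) = H^1(F, H^1_{dR}(X))$. The module $H^1(F,\mc M)$ has finite length (it is killed by $p^n$), so by Nakayama it suffices to show $H^1(F, \mc M) \otimes k = 0$. One cannot simply show $H^1(F, H^1_{dR}(X)) = 0$: already for $\ZZ/p$ one has $H^1_{dR}(X) \cong k[F]^{2(g_Y-1)} \oplus k^2$, whose $H^1$ is $k^2$. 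The correct statement is that $H^1(F, H^1_{dR}(X))$ has the same $k$-dimension as $H^2(F, \mc M)[p]$.

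My plan for (2) is therefore a direct argument via the Hochschild--Serre spectral sequence together with counting ranks.
\begin{enumerate}
\item Since $\pi$ is \'etale, $X \to Y$ is a Galois $F$-torsor. For crystalline cohomology there should be a spectral sequence $H^i(F, R\Gamma_{\cris}(X)) \Rightarrow H^{i+j}_{\cris}(Y)$, obtained by étale descent of the crystalline complex, since $X$ is an $F$-torsor over $Y$.
\item Use $H^0_{\cris} = W$ and $H^2_{\cris} = W$, both with trivial $F$-action.
\item Compare terms in total degree $2$. This gives an exact sequence in low degrees:
\[
0 \to H^1(F, W) \to H^1_{\cris}(Y) \to \mc M^F \to H^2(F, W) \to H^2_{\cris}(Y) \to \cdots
\]
\item Lemma~\ref{lem:invariants} makes $H^1_{\cris}(Y) \to \mc M^F$ an isomorphism. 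Hence $H^2(F,W) = W_n$ injects into $H^2_{\cris}(Y) = W$.
\item Since $W$ is torsion-free, that map is zero. The edge map $H^2(F,W) \to H^2_{\cris}(Y)$ therefore vanishes, which forces the differential $d_2 : H^0(F, H^1) \to H^2(F, H^0)$ to be surjective.
\item Continue along the degree-$3$ terms, where $H^3_{\cris}(Y) = 0$. The $E_\infty$ terms $H^1(F, \mc M)$ and $H^3(F, W) = 0$ must vanish in the limit. Hence $H^1(F, \mc M)$ is killed by the differential $d_2$ into $H^3(F, W) = 0$, while it also receives the differential from $H^{-1}$, which is trivial.
\item Then $E_\infty^{1,1} = E_3^{1,1} = \ker\bigl(d_2 : E_2^{1,1} \to E_2^{3,0}\bigr) = H^1(F, \mc M)$. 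But $E_\infty^{1,1}$ is a subquotient of $H^2_{\cris}(Y) = W$, and it is killed by $p^n$.
\item Finally, $E^{2,0}_\infty = 0$ and $E^{0,2}_\infty \subset W^F = W$ has finite index, so the torsion $E^{1,1}_\infty$ must vanish. Thus $H^1(F,\mc M) = 0$.
\end{enumerate}

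I expect the main obstacle to be justifying the Hochschild--Serre spectral sequence for crystalline cohomology of an étale torsor, and in particular the degree-$2$ accounting in the last step. As a fallback I would use the dimension count with Lemma~\ref{lem:Tor_ses}. This means computing $H^1(F, H^1_{dR}(X))$ from the known structure of $H^1_{dR}(X)$ in the étale case \cite{Garnek_Kontogeorgis_cyclic_de_Rham}, and matching its dimension against $H^2(F,\mc M)[p]$. The latter is controlled by the cokernel of $\tr_{X/Y}$, which for étale $\pi$ is computed via Nakayama from the cokernel of the de Rham trace.
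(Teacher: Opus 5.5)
Part (1) is the paper's argument. There $\pi$ has full inertia somewhere, so every $\ZZ/p$-layer is ramified and Lemma~\ref{lem:invariants} is only used in its non-\'{e}tale case.

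Part (2) takes a genuinely different route, and its core is sound. However, step 4 must be deleted: it is false, and it contradicts your step 5. In the five-term sequence
\[
0 \to H^1(F,W) \to H^1_{\cris}(Y) \xrightarrow{\pi^*} \mc M^F \xrightarrow{d_2} H^2(F,W) \to H^2_{\cris}(Y)
\]
the last map goes from $W_n$ to the torsion-free $W$, so it is zero and $d_2$ is onto (your step 5). Hence for \'{e}tale $\pi$ the cokernel of $\pi^*\colon H^1_{\cris}(Y)\to\mc M^F$ is $W_n\neq 0$, so Lemma~\ref{lem:invariants} does not hold for \'{e}tale covers.

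As a check, take an ordinary elliptic curve $E$ and the \'{e}tale $\ZZ/p$-cover $V\colon E^{(p)}\to E$. The group acts by translations, hence trivially on $H^1_{\cris}(E^{(p)})$. Yet $V^*$ is not onto, since $F^*V^*=p$ and $\det F^*$ has valuation $1$.

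Step 6 is also garbled: $E_2^{1,1}=H^1(F,\mc M)$ sits in total degree $2$, not $3$. What you need is that $d_2\colon E_2^{1,1}\to E_2^{3,0}=H^3(F,W)=0$ vanishes and nothing maps into $E^{1,1}$, so $E_\infty^{1,1}=H^1(F,\mc M)$. Steps 7--8 then finish the proof. $E_\infty^{2,0}$ is a quotient of $W_n$ inside $H^2_{\cris}(Y)\cong W$, hence $0$. So $H^1(F,\mc M)\cong F^1H^2_{\cris}(Y)$ is a $p^n$-torsion submodule of $W$, hence $0$. The Nakayama/dimension fallback is unnecessary.

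The paper argues differently, by induction on $|F|$. The $\ZZ/p$ case comes from the explicit structure in Subsection~\ref{subsec:Z/p-covers}. The inductive step uses inflation--restriction with $H^1(F_1,\mc M)=0$ and $H^1(F/F_1,\mc M^{F_1})=0$, the latter via the identification $\mc M^{F_1}=H^1_{\cris}(X_1)$ from Lemma~\ref{lem:invariants}. That identification is exactly the \'{e}tale case which, by the computation above, is off by a copy of $H^2(F_1,W)\cong k$. So that step of the paper needs a separate justification.

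Your argument avoids it entirely and treats all $n$ at once. It uses only $H^0_{\cris}(X)=W$, $H^3(F,W)=0$ and torsion-freeness of $H^2_{\cris}(Y)$. The price is the Hochschild--Serre spectral sequence for crystalline cohomology of an \'{e}tale $F$-torsor, which you must justify. One way:
\begin{enumerate}
\item use $R\Gamma_{\cris}(Y/W_r)\simeq R\Gamma(Y_{\et},W_r\Omega^\bullet_Y)$;
\item note that $W_r\Omega^\bullet_Y$ restricts to $W_r\Omega^\bullet_X$ on $X_{\et}$;
\item apply Hochschild--Serre for \'{e}tale cohomology of complexes;
\item pass to the limit over $r$.
\end{enumerate}
You should also check that the edge map $H^1_{\cris}(Y)\to\mc M^F$ is $\pi^*$.
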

\begin{proof}
	(1) By Lemmas~\ref{lem:H1Y_=tr} and~\ref{lem:invariants}:
	\[
	H^1_{\cris}(Y) \cong H^1_{\cris}(X)^F \cong N_F \cdot H^1_{\cris}(X).
	\]
	Thus by~\eqref{eqn:H2_definition} we have $H^2(F, \mc M) = \mc M^F/(N_F \cdot \mc M) = 0$.\\
	(2) We prove this by induction on $|F|$. For $|F| = p$ this is clear by Subsection~\ref{subsec:Z/p-covers}.
	Assume now that this holds for cyclic $p$-groups of order less than $|F|$.
	Then by Lemma~\ref{lem:invariants} $\mc M^{F_1} = H^1_{\cris}(X_1)$, where $X_1 := X/F_1$.
	Thus by the induction assumption for $X_1 \to Y$ we obtain $H^1(F/F_1, \mc M^{F_1}) = 0$.
	Also by the induction assumption for $X \to X_1$, $H^1(F_1, \mc M) = 0$. Therefore 
	$H^1(F, \mc M) = 0$ by the inflation--restriction exact sequence.
\end{proof}
\begin{Corollary} \label{cor:H2(M)}
	For any $t > m$ we have an isomorphism of $W[F]$-modules:
	\[
		H^2(F_t, \mc M)[p] \cong k^{\oplus 2}.
	\]
\end{Corollary}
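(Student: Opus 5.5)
The plan is to reduce to an \'{e}tale cyclic cover and then read off the answer from the known structure of de Rham cohomology via Lemma~\ref{lem:Tor_ses}. Fix $t > m$ and put $X_m := X/F_m$ and $Z := X/F_t$. Every inertia group of $\pi$ has order at most $p^m$, so it is contained in $F_m$. Hence the $F/F_m$-cover $X_m \to Y$ is \'{e}tale, and so is its subcover $X_m \to Z$ with group $F_t/F_m$. On the other hand $m_{X/X_m} = m$, so Lemma~\ref{lem:H1Y_=tr} applies to the $F_m$-cover $X \to X_m$.

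\textbf{Step 1: reduction to $X_m \to Z$.} Compute $H^2(F_t, \mc M) = \mc M^{F_t}/N_{F_t}\mc M$ via~\eqref{eqn:H2_definition}. Lemma~\ref{lem:invariants}, applied to the $F_t$-cover $X \to Z$, gives $\mc M^{F_t} = H^1_{\cris}(Z)$. Applied to $X \to X_m$, it identifies $H^1_{\cris}(X_m)$ with $\mc M^{F_m}$, and Lemma~\ref{lem:H1Y_=tr} gives $N_{F_m}\mc M = H^1_{\cris}(X_m)$. Since $N_{F_t} = N_{F_t/F_m} N_{F_m}$, we obtain
\[
H^2(F_t, \mc M) \cong H^1_{\cris}(Z)/N_{F_t/F_m} H^1_{\cris}(X_m) \cong H^2(F_t/F_m, \mc M'),
\]
where $\mc M' := H^1_{\cris}(X_m)$, viewed as a $W[F/F_m]$-module. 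The last isomorphism again uses Lemma~\ref{lem:invariants}, now for $X_m \to Z$. All identifications are $F$-equivariant, because the pullback and trace maps are.

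\textbf{Step 2: the \'{e}tale case.} Since $X_m \to Z$ is \'{e}tale, Proposition~\ref{prop:H2=0|H1=0}~(2) gives $H^1(F_t/F_m, \mc M') = 0$. Lemma~\ref{lem:Tor_ses} then yields an isomorphism
\[
H^2(F_t/F_m, \mc M')[p] \cong H^1\bigl(F_t/F_m, H^1_{dR}(X_m)\bigr),
\]
which is compatible with the $F/F_t$-action. To compute the right-hand side we use that $X_m \to Y$ is an \'{e}tale $F/F_m$-cover. By the main theorem of \cite{Garnek_Kontogeorgis_cyclic_de_Rham} in the \'{e}tale case (equivalently, the de Rham analogue of the formula in Theorem~\ref{thm:crys_cyclic} with $m = 0$), there is an isomorphism of $k[F/F_m]$-modules
\[
H^1_{dR}(X_m) \cong k[F/F_m]^{2(g_Y - 1)} \oplus k^{\oplus 2}.
\]

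\textbf{Step 3: conclusion.} As a module over $F_t/F_m$, the free module $k[F/F_m]$ is induced from a free $k[F_t/F_m]$-module, so its $H^1(F_t/F_m, \cdot)$ vanishes. The trivial summand $k$ contributes $H^1(F_t/F_m, k) = \mathrm{Hom}(F_t/F_m, k) \cong k$, and $F$ acts trivially on this group since $F$ is abelian and acts trivially on $k$. Therefore $H^2(F_t, \mc M)[p] \cong k^{\oplus 2}$ with trivial action, as a $W[F]$-module.

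The main obstacle is the $F$-equivariance in Step~3. The trivial $k^{\oplus 2}$ must come from a description of $H^1_{dR}(X_m)$ as a $k[F/F_m]$-module, not merely as a $k[F_t/F_m]$-module; otherwise the $F/F_t$-action on the two-dimensional answer is not controlled. This is exactly why Step~2 passes through the \'{e}tale cover $X_m \to Y$ rather than only $X_m \to Z$. A secondary point to check is that the isomorphism of Step~1 is indeed $F$-equivariant.
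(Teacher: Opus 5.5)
Your proof is correct and follows essentially the same route as the paper: you use $N_{F_m}\mc M = H^1_{\cris}(X/F_m)$ together with Lemma~\ref{lem:invariants} to rewrite $H^2(F_t,\mc M)$ as $H^2(F_t/F_m, H^1_{\cris}(X/F_m))$, then Proposition~\ref{prop:H2=0|H1=0}~(2) and Lemma~\ref{lem:Tor_ses} to pass to $H^1(F_t/F_m, H^1_{dR}(X/F_m))$, and finally the \'{e}tale case of the de Rham structure theorem. Your explicit check that $F$ acts trivially on $\mathrm{Hom}(F_t/F_m,k)$ supplies an equivariance detail that the paper leaves implicit.
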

\begin{proof}
	Write $X_{\et} := X/F_m$, $\mc M_{\et} := H^1_{\cris}(X_{\et})$, $\mc N := H^1_{dR}(X_{\et})$ and $\ol F_i := F_i/F_m$ for any $i \ge m$.
	Note that the $\ol F_n$-cover $X_{\et} \to Y$ is \'{e}tale, while the $F_m$-cover $X \to X_{\et}$ satisfies  the assumptions of Lemma~\ref{lem:H1Y_=tr}.
	By Lemma~\ref{lem:H1Y_=tr} and Lemma~\ref{lem:invariants} we have $\mc M^{F_m} \cong \mc M_{\et}$ and $N_{F_m} \mc M = \mc M_{\et}$. Hence:
	\[
		H^2(F_t, \mc M) = \frac{\mc M^{F_t}}{N_{F_t} \mc M} = \frac{(\mc M^{F_m})^{\ol F_t}}{N_{\ol F_t}(N_{F_m} \mc M)}
		= H^2(\ol F_t, \mc M_{\et}).
	\]
	By Lemma~\ref{lem:Tor_ses} and Proposition~\ref{prop:H2=0|H1=0}~(2) we see that $H^2(\ol F_t, \mc M_{\et})[p] \cong H^1(\ol F_t, \mc N)$ for any $t > m$. Observe that $\mc N \cong k[\ol F_n]^{2(g_Y - 1)} \oplus k^{\oplus 2}$ as a $k[\ol F_n]$-module by \cite[Theorem~4.1]{Garnek_Kontogeorgis_cyclic_de_Rham}.
	Hence:
	\[
		H^1(\ol F_t, \mc N) \cong H^1(\ol F_t, k[\ol F_n]^{2(g_Y - 1)} \oplus k^{\oplus 2}) \cong k^{\oplus 2}. \qedhere
	\]
\end{proof}
\section{Proofs of Theorems~\ref{thm:crys_cyclic} and \ref{thm:cyclic_sylow}} \label{sec:pf_of_cyclic2}
In this section we continue the proof of Theorem~\ref{thm:crys_cyclic}. Recall
that we have verified it for $n = 1$ (cf. Subsection~\ref{subsec:Z/p-covers}).
We proceed by induction on $n$ in the following steps:
\begin{enumerate}
	\item \bb{Step I: reduction.} 
	\item[] We show that $\mc D \otimes k \cong \DD \otimes k$.
	
	\item \bb{Step II: relation with the fixed points module.}
	\item[] We show that $\Fix_{F_1}(\mc D)_t = p \cdot \mc D_{t+1}$, $\Fix_{F_1}(\DD)_t = p \cdot \DD_{t+1}$ and that $\Fix_{F_1}(\mc D) \cong \Fix_{F_1}(\DD)$.
	
	\item \bb{Step III: Isomorphism of Yakovlev diagrams.}
	\item[] We show that $\mc D \cong \DD$.
	
	\item \bb{Step IV: isomorphism of modules.}
	\item[] We deduce from $\mc D \cong \DD$ that $\mc M \cong \MM$.
\end{enumerate}
\subsection{Reduction} \label{subsec:reduction}
The comparison between de Rham and crystalline cohomology implies that
$\mc M \otimes_W k \cong H^1_{dR}(X)$. Hence, by \cite[Theorem 4.1]{Garnek_Kontogeorgis_cyclic_de_Rham} and Lemma~\ref{lem:JJ_otimes_k} we have:
\begin{align*}
	\mc M \otimes_W k &\cong J_{p^n}^{2 (g_Y - 1)} \oplus J_{p^n - p^{n-m} + 1}^2 \oplus \bigoplus_{\substack{Q \in B\\ Q \neq Q_0}} J_{p^n - p^n/e_{Q}}^2
	\oplus \bigoplus_{Q \in B} \bigoplus_{t = 0}^{m_Q  - 1} J_{p^n - p^{n+t}/e_Q}^{u_Q^{(t+1)} - u_Q^{(t)}}\\
	& \cong \MM \otimes_W k.
\end{align*}
\begin{Corollary} \label{cor:mcD_otimes_k_is_E}
	The $F$-Yakovlev diagrams $\mc D \otimes k$ and $\Delta(\mc M \otimes k)^{(m)}$ are isomorphic.
\end{Corollary}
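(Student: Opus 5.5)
Write $\mathcal N := \mathcal M \otimes_W k \cong H^1_{dR}(X)$. The plan is to compare the two diagrams termwise using Lemma~\ref{lem:Tor_ses}. For each $t$, it gives a natural injection
\[
\mc D_t \otimes k = H^1(F_t, \mc M)\otimes_W k \hookrightarrow H^1(F_t, \mc N) = \Delta(\mc N)_t
\]
with cokernel $H^2(F_t,\mc M)[p]$. By naturality of the universal coefficient sequence with respect to restriction and corestriction, these injections assemble into a monomorphism of Yakovlev diagrams $\mc D\otimes k \hookrightarrow \Delta(\mc N)$. It therefore suffices to identify its image with $\Delta(\mc N)^{(m)}$.

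\textbf{Case $t \le m$.} Here I claim $H^2(F_t,\mc M)=0$, so the injection is an isomorphism. Consider the $F_t$-cover $X \to X/F_t$. Because $F$ is cyclic and some inertia group equals $F_m \supseteq F_t$, this cover has $m_{X/(X/F_t)} = t$, which is the full order exponent. Applying Proposition~\ref{prop:H2=0|H1=0}~(1) to it (with $F_t$ in place of $F$) gives $H^2(F_t,\mc M)=0$. Thus $\mc D_t\otimes k = \Delta(\mc N)_t = \Delta(\mc N)^{(m)}_t$ for $t\le m$.

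\textbf{Case $t>m$.} I need $\mc D_t\otimes k = \im(\alpha_{m,t}:\Delta(\mc N)_m\to \Delta(\mc N)_t)$.

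For the inclusion $\supseteq$: the image of $\Delta(\mc N)_m = \mc D_m\otimes k$ under $\alpha_{m,t}$ lies in the image of $\mc D_t\otimes k$, by commutativity of the monomorphism of diagrams.

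For equality, I would show that $\alpha_{m,t}:\mc D_m\to\mc D_t$ is surjective (so that $\mc D_t\otimes k$ equals the image), and then that nothing more can be in it. To get surjectivity I would use the \'etale quotient from the proof of Corollary~\ref{cor:H2(M)}: $X_{\et}=X/F_m$ and $\mc M^{F_m}=N_{F_m}\mc M=\mc M_{\et}$, with $H^1(\ol F_t,\mc M_{\et})=0$ by Proposition~\ref{prop:H2=0|H1=0}~(2). The inflation--restriction sequence then gives that restriction $H^1(F_t,\mc M)\to H^1(F_m,\mc M)$ is injective.

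\textbf{Dimension count.} This does not immediately give surjectivity of $\alpha$, so the alternative is a dimension count. Since $\mc D_t\otimes k\subseteq \Delta(\mc N)_t$ with codimension $\dim H^2(F_t,\mc M)[p] = 2$ (Corollary~\ref{cor:H2(M)}), it suffices to show two things:
\begin{itemize}
\item $\im(\alpha_{m,t})$ has codimension exactly $2$ in $\Delta(\mc N)_t$;
\item $\im(\alpha_{m,t})\subseteq \mc D_t\otimes k$.
\end{itemize}
The second was shown above.

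For the codimension, I would compute it explicitly using $\mc N\cong\MM\otimes k$ (established in the Reduction subsection). By Proposition~\ref{prop:Delta_J_otimes_k}, the diagram $\Delta(\MM\otimes k)$ equals the sum of $\Delta(\JJ_i)\otimes k$ and $\Delta(W[F])\otimes k$ (both of the latter satisfying $\alpha_{m,t}$ surjective for $t>m$, by Corollary~\ref{cor:betas_injective}~(2) since $m_Q\le m$), together with $\Delta(\SS_m\otimes k)^2$. The same Proposition gives that $\Delta(\SS_m)\otimes k = \Delta(\SS_m\otimes k)^{(m)}$, and that the cokernel at level $t>m$ is $H^2(F_t,\SS_m)[p]\cong k$ for each of the two copies. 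Hence the codimension of $\im\alpha_{m,t}$ is $2$, and the inclusion is an equality.

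The main obstacle is this last bookkeeping: verifying that $\im(\alpha_{m,t})$ in $\Delta(\MM\otimes k)$ has codimension exactly $2$, with the $\JJ$ and free summands contributing nothing. This relies on $\alpha_t$ being surjective after reduction for $t\ge m_Q$, together with the explicit description of $\Delta(\SS_m\otimes k)$.
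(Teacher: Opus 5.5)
Your proof is correct and is essentially the paper's own argument. For $t\le m$ you get $H^2(F_t,\mc M)=0$ from Proposition~\ref{prop:H2=0|H1=0}~(1) applied to $X\to X/F_t$. For $t>m$ you compare the cokernel of $\alpha_{m,t}$ on $\Delta(\mc M\otimes k)\cong\Delta(\MM\otimes k)$, which is $k^2$ and comes only from the two copies of $\SS_m$, with $H^2(F_t,\mc M)[p]\cong k^{\oplus 2}$ from Corollary~\ref{cor:H2(M)}. Your codimension count is the paper's Snake Lemma step in different words, and the inflation--restriction detour you abandon is not needed.
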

\begin{proof}
	By Proposition~\ref{prop:H2=0|H1=0}~(1), we have $H^2(F_t, \mc M) = 0$ for $t \le m$, which implies $\mc D_t \otimes k \cong \Delta(\mc M \otimes k)_t$ by Lemma~\ref{lem:Tor_ses}. Assume now that $t > m$. Using again Lemma~\ref{lem:Tor_ses} and the equality
	$H^2(F_m, \mc M) = 0$ we obtain the following commutative diagram with exact rows:
	\begin{equation} \label{eqn:diagram_D_otimes_k_vs_Delta}
		\begin{tikzcd}
			0 \arrow[r] & \mc D_m \otimes k \arrow[d] \arrow[r] & \Delta(\mc M \otimes k)_m \arrow[d] \arrow[r] & 0 \arrow[d] \arrow[r]          & 0 \\
			0 \arrow[r] & \mc D_t \otimes k \arrow[r]           & \Delta(\mc M \otimes k)_t \arrow[r]           & {H^2(F_t, \mc M)[p]} \arrow[r] & 0
		\end{tikzcd}
	\end{equation}
	Moreover, by Proposition~\ref{prop:Delta_J_otimes_k} we have
	\begin{align*}
		\coker(\Delta(\mc M \otimes k)_m \to \Delta(\mc M \otimes k)_t)
		&\cong \coker(\Delta(\MM \otimes k)_m \to \Delta(\MM \otimes k)_t)\\
		&\cong \coker(\Delta(\SS_m^2 \otimes k)_m \to \Delta(\SS_m^2 \otimes k)_t)\\
		&\cong k^2.
	\end{align*}
	Hence, applying the Snake Lemma to the diagram above, we obtain the short exact sequence:
	\begin{align*}
		0 &\to \coker(\mc D_m \otimes k \to \mc D_t \otimes k) \to \coker(\Delta(\mc M \otimes k)_m \to \Delta(\mc M \otimes k)_t)\\
		&\to H^2(F_t, \mc M)[p] \to 0.
	\end{align*}
	By Corollary~\ref{cor:H2(M)} the last two terms in this exact sequence are isomorphic,
	which means that the map $\mc D_m \otimes k \to \mc D_t \otimes k$ is surjective.
	This allows to conclude from the diagram~\ref{eqn:diagram_D_otimes_k_vs_Delta} that $\mc D_t \otimes k \cong \im(\Delta(\mc M \otimes k)_m \to \Delta(\mc M \otimes k)_t)$. 
\end{proof}

\begin{Corollary} \label{cor:same_reductions}
	We have an isomorphism $\mc D \otimes_W k \cong \DD \otimes_W k$ of $F$-Yakovlev diagrams over $W$.
\end{Corollary}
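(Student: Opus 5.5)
By Corollary~\ref{cor:mcD_otimes_k_is_E} we have $\mc D \otimes k \cong \Delta(\mc M \otimes k)^{(m)}$. It therefore suffices to prove
\[
\DD \otimes k \cong \Delta(\MM \otimes k)^{(m)}
\]
and then to transport this identification along the isomorphism $\mc M \otimes_W k \cong \MM \otimes_W k$ established at the beginning of Subsection~\ref{subsec:reduction}. Both $\Delta(-)$ and the truncation $(-)^{(m)}$ are functorial: an isomorphism of $k[F]$-modules $\mc M \otimes k \cong \MM \otimes k$ induces an isomorphism of Yakovlev diagrams $\Delta(\mc M \otimes k) \cong \Delta(\MM \otimes k)$. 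Since $\mc D^{(m)}$ is built only from the images of $\alpha_{m,t}$, this isomorphism restricts to one between the $m$-truncations.

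For the statement about $\DD$ we decompose $\MM$ into its summands. Both $\Delta$ and $\otimes k$ commute with direct sums, and the construction $\mc D \mapsto \mc D^{(m)}$ does as well, because images of $\alpha_{m,t}$ on a direct sum are direct sums of the images. We therefore check the claim summand by summand.

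For $W[F]$, all cohomology vanishes, so both sides are $0$.

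For $\SS_m$, Proposition~\ref{prop:Delta_J_otimes_k}~(2) gives exactly $\Delta(\SS_m) \otimes k = \Delta(\SS_m \otimes k)^{(m)}$.

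For $\JJ_i$ with $i \le m$, Proposition~\ref{prop:Delta_J_otimes_k}~(1) gives $\Delta(\JJ_i) \otimes k \cong \Delta(\JJ_i \otimes k)$. It remains to see that truncation at $m$ changes nothing, i.e. that $\alpha_{m,t}$ is surjective on $\Delta(\JJ_i \otimes k)$ for all $t > m$. By Corollary~\ref{cor:betas_injective}~(2), the maps $\alpha_t$ of $\Delta(\JJ_i)$ are surjective for $t \ge i$, hence for $t \ge m$ because $i \le m$. Surjectivity is preserved by $\otimes k$, so $\Delta(\JJ_i) \otimes k$ is already generated by its $m$-th term and its truncation at $m$ is itself.

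Every $\JJ$-summand of $\MM$ indeed has index at most $m$: the indices occurring are $m_Q$ and $m_Q - t$, and $m_Q \le m$ by the definition of $m$. This justifies the hypothesis $i \le m$ above and completes the summand-by-summand check.

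The step requiring the most care is this compatibility of $(-)^{(m)}$ with $\otimes k$ and with direct sums for the $\JJ_i$ summands. It rests on the surjectivity of the $\alpha_t$ for $t \ge i$ supplied by Corollary~\ref{cor:betas_injective}, together with the inequality $m_Q \le m$. The remaining steps are formal consequences of functoriality.
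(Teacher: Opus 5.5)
Your proposal is correct and is essentially the paper's proof: the chain $\DD \otimes k \cong \Delta(\MM \otimes k)^{(m)} \cong \Delta(\mc M \otimes k)^{(m)} \cong \mc D \otimes k$, obtained from Proposition~\ref{prop:Delta_J_otimes_k}, the isomorphism $\mc M \otimes_W k \cong \MM \otimes_W k$, and Corollary~\ref{cor:mcD_otimes_k_is_E}. You also spell out the summand-by-summand check that the paper leaves implicit, in particular that truncating at $m$ leaves the $\JJ_i$ summands with $i \le m$ unchanged, using Corollary~\ref{cor:betas_injective}~(2).
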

\begin{proof}
	By Proposition~\ref{prop:Delta_J_otimes_k} one obtains $\Delta(\MM) \otimes k \cong \Delta(\MM \otimes k)^{(m)}$.
	Hence by Corollary~\ref{cor:mcD_otimes_k_is_E}:
	\[
		\Delta(\MM) \otimes k \cong \Delta(\MM \otimes k)^{(m)}
		\cong \Delta(\mc M \otimes k)^{(m)} \cong \mc D \otimes k. \qedhere
	\]
\end{proof}
\begin{Corollary} \label{cor:ker_Deltan_Delta_1=p_Delta_n}
	For every $t \ge 1$ we have:
	\[
		\ker(\beta_{1, t+1} : \mc D_{t+1} \to \mc D_1) = p \cdot \mc D_{t+1}.
	\]
\end{Corollary}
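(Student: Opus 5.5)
The plan is to prove the two inclusions separately; the nontrivial one follows from the reduction step (Corollary~\ref{cor:same_reductions}) together with the injectivity statement of Corollary~\ref{cor:betas_injective}.

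\emph{The inclusion $p \cdot \mc D_{t+1} \subset \ker \beta_{1, t+1}$.} This is \eqref{eqn:pT_Delta_subset_Fix_Delta} with $T = 1$. Indeed, $\mc D_1 = H^1(F_1, \mc M)$ is killed by $p$, so $\beta_{1,t+1}(p x) = p\, \beta_{1,t+1}(x) = 0$ for every $x \in \mc D_{t+1}$.

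\emph{The reverse inclusion.} Since $\mc D_1$ is a $W_1$-module, we have $\mc D_1 = \mc D_1 \otimes_W k$. Hence $\beta_{1,t+1}$ factors as
\[
\mc D_{t+1} \to \mc D_{t+1} \otimes_W k \xrightarrow{\beta_{1,t+1} \otimes k} \mc D_1 \otimes_W k = \mc D_1 .
\]
The kernel of the first map is exactly $p \cdot \mc D_{t+1}$. It therefore suffices to show that $\beta_{1,t+1} \otimes k$ is injective on $\mc D \otimes k$. Because $-\otimes_W k$ is a functor, $\beta_{1,t+1} \otimes k = (\beta_1 \otimes k) \circ \cdots \circ (\beta_t \otimes k)$. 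So it is enough to check that each $\beta_s \otimes k$ is injective, for $1 \le s \le t$.

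\emph{Transfer to $\DD$.} By Corollary~\ref{cor:same_reductions} there is an isomorphism of $F$-Yakovlev diagrams $\mc D \otimes k \cong \DD \otimes k$. Such an isomorphism commutes with the maps $\beta_s \otimes k$, so injectivity can be checked on $\DD \otimes k = \Delta(\MM) \otimes k$. This diagram is the direct sum of the reductions of the diagrams of the summands of $\MM$, namely $W[F]$, $\SS_m$ and $\JJ_i$ with $i \ge 1$ (recall that $\JJ_0 = 0$).

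\emph{Checking the summands.} The free summand $W[F]$ has $\Delta(W[F]) = 0$, since $W[F]$ is induced from the trivial subgroup and hence cohomologically trivial. For the summands $\JJ_i$ and $\SS_m$ with $i, m \ge 1$, Corollary~\ref{cor:betas_injective}~(1) gives $\ker(\beta_s \otimes k) = 0$. If $m = 0$ the remaining summand is $\SS_0 = W$, and $\Delta(W) = 0$ because $H^1(F_t, W) = 0$. Thus every $\beta_s \otimes k$ is injective on $\DD \otimes k$, hence on $\mc D \otimes k$, which gives the equality.

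The only point requiring real care is this summand-by-summand bookkeeping, that is, making sure no summand of $\MM$ violates injectivity. Beyond that the argument is formal.
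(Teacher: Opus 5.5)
Your proposal is correct and follows the paper's proof essentially verbatim. The easy inclusion comes from \eqref{eqn:pT_Delta_subset_Fix_Delta}, and the reverse inclusion reduces to injectivity of each $\beta_s \otimes k$, which you transfer to $\DD \otimes k$ via Corollary~\ref{cor:same_reductions} and check with Corollary~\ref{cor:betas_injective}. Your extra bookkeeping on the summands of $\MM$ (namely $\Delta(W[F]) = 0$, $\Delta(\SS_0) = \Delta(W) = 0$, and $\JJ_0 = 0$) just makes explicit what the paper leaves implicit.
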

\begin{proof}
Clearly, $p \cdot \mc D_{t+1}$ is contained in $\ker(\mc D_{t+1} \to \mc D_1)$ by~\eqref{eqn:pT_Delta_subset_Fix_Delta}. To end the proof it suffices to show that for every $t$, the map
\[
\beta_t : \mc D_{t+1}/p \mc D_{t+1} \to \mc D_t/p \mc D_t
\]
is injective. Since $\mc D \otimes k \cong \DD \otimes k$ by Corollary~\ref{cor:same_reductions}, the statement
follows from Corollary~\ref{cor:betas_injective}.
\end{proof}

\subsection{Relation with the fixed points module} \label{subsec:fixed}
Lemma~\ref{lem:invariants} implies that $\mc M^{F_1} \cong H^1_{\cris}(X')$.
Abbreviate $m_Q' := m_{X'/Y, Q}$.
By induction assumption:
\begin{align*}
	H^1_{\cris}(X') \cong W[F']^{2 (g_Y - 1)} \oplus \SS_{m_{X'/Y}}^2 \oplus \bigoplus_{\substack{Q \in B\\ Q \neq Q_0}} \JJ_{m_Q'}^2
	\oplus \bigoplus_{Q \in B} \bigoplus_{t = 0}^{m_{X'/Y, Q}  - 1} \JJ_{m_{X'/Y, Q} - t}^{u_{X'/Y, Q}^{(t+1)} - u_{X'/Y, Q}^{(t)}}.
\end{align*}
Note that $m_{X'/Y} = \max(m_{X/Y} - 1, 0)$ and $u_{X'/Y, Q}^{(t)} = u_{X/Y, Q}^{(t)}$ for any $t < m_{X/Y, Q}$ and any $Q \in B$ (since the upper ramification jumps are compatible with passage to quotients, cf. \cite[Proposition IV.\S 3.14]{Serre1979}).
Using~\eqref{eqn:fix_Ji_Si} one easily checks that $H^1_{\cris}(X') \cong \MM^{F_1}$. To summarize, we obtain $\mc M^{F_1} \cong \MM^{F_1}$
and $\Fix_{F_1}(\mc D) \cong \Fix_{F_1}(\DD)$ by Lemma~\ref{lem:operations_on_yd}. On the other hand, by Corollary~\ref{cor:ker_Deltan_Delta_1=p_Delta_n} and Corollary~\ref{cor:betas_injective} we have
$\Fix_{F_1}(\mc D)_t = p \cdot \mc D_{t+1}$ and $\Fix_{F_1}(\DD)_t = p \cdot \DD_{t+1}$. 
\subsection{Isomorphism of Yakovlev diagrams} \label{subsec:iso_of_YD}
By Corollary~\ref{cor:same_reductions} we know that $\DD_t \otimes k \cong \mc D_t \otimes k$ for all $1 \le t \le n$ and $p \DD_t \cong p \mc D_t$ for all $1 \le t \le n$
(for $t = 1$ both sides are zero modules). Hence, by Lemma~\ref{lem:pM_and_M/p_determines_M} we have $\DD_t \cong \mc D_t$
as $W$-modules for all $1 \le t \le n$. On the other hand, by Corollary~\ref{cor:same_reductions} there exists an isomorphism $\varphi : \DD \otimes k \to \mc D \otimes k$
of $F$-Yakovlev diagrams. Using Proposition~\ref{prop:lifting_F_automorphisms} we can lift it to a morphism $\varphi' : \DD \to \mc D$. It is surjective by Nakayama's lemma, and hence
$\len_W(\ker \varphi_t') = \len_W(\DD_t) - \len_W(\mc D_t) = 0$. Thus $\varphi'$ is an isomorphism.
\subsection{Isomorphism of modules}
By the previous steps we obtain $\mc D \cong \DD$. Therefore by
Theorem~\ref{thm:yako1} we have:
\[
	\mc M \oplus \bigoplus_{i = 0}^n W[F/F_i]^{a_i}
	\cong \MM \oplus \bigoplus_{i = 0}^n W[F/F_i]^{b_i}
\]
for some $a_i, b_i \ge 0$. By tensoring this isomorphism with $k$ and noting that
$\mc M \otimes_W k \cong \MM \otimes_W k$, we see that
\[
	\bigoplus_{i = 0}^n k[F/F_i]^{a_i}
	\cong \bigoplus_{i = 0}^n k[F/F_i]^{b_i}.
\]
This implies that $a_i = b_i$ for $i = 0, \ldots, n$, since $k[F/F_i]$ are pairwise non-isomorphic indecomposable $k[F]$-modules. Hence, since $\Mod_{W[F]}^{\mathrm{ff}}$ is a Krull--Schmidt category,
we see that $\mc M \cong \MM$. This completes the proof of Theorem~\ref{thm:crys_cyclic}.
\subsection{Proof of Theorem~\ref{thm:cyclic_sylow}}
We prove now Theorem~\ref{thm:cyclic_sylow}.
Let $k$, $G$, $F$ and $X$ be as in Theorem~\ref{thm:cyclic_sylow}.
By Lemma~\ref{lem:conlon} we can assume without loss of generality that
$G$ is $p$-hypo-elementary. In particular $F$ is normal in $G$ and $N_1 = N_2 = \cdots = N_n = G$.
Let $\mc M := H^1_{\cris}(X)$ and $\mc D := \Delta(\mc M)$.
The isomorphism in Corollary~\ref{cor:mcD_otimes_k_is_E} can be promoted to an isomorphism of $G$-Yakovlev diagrams. Thus, since the $G$-structure of $H^1_{dR}(X)$ is determined by the ramification data and the genus of $X$, the $G$-structure of $\mc D \otimes k$
is determined by the ramification data and the genus of $X$ as well. 
By Corollary~\ref{cor:G-YD_are_iso}, the $G$-structure of $\mc D$ is determined by the $G$-structure of $\mc D \otimes k$, and hence depends only on the ramification data and on the genus of~$X$.
Thus by Theorem~\ref{thm:yako2} the ramification
data determines $H^1_{\cris}(X)$ up to a trivial source module. Note however that by \cite[Corollary~2.6.3]{Benson_modular_representation_theory}
a trivial source $W[G]$-module $M$ is uniquely determined by its reduction $M \otimes_W k$.
We conclude by noting that $H^1_{\cris}(X) \otimes_W k \cong H^1_{dR}(X)$ is determined
by the ramification data by \cite[Main Theorem]{Garnek_Kontogeorgis_cyclic_de_Rham}.
\bibliography{bibliografia}
%

\end{document}